\theoremstyle{plain}
\declaretheorem[title=Theorem, parent=section]{theorem}
\declaretheorem[title=Lemma,sibling=theorem]{lemma}
\declaretheorem[title=Proposition,sibling=theorem]{proposition}
\theoremstyle{definition}
\declaretheorem[title=Definition,sibling=theorem]{definition}
\declaretheorem[title=Remark,sibling=theorem]{remark}
\declaretheorem[title=Remark, numbered=no]{remark*}
\declaretheorem[title=Example, sibling=theorem]{example}
\declaretheorem[title=Assumption, numbered=no]{assumption*}
\numberwithin{equation}{section}
\newcommand{\N}{\mathbb{N}}
\newcommand{\R}{\mathbb{R}}
\newcommand{\Z}{\mathbb{Z}}
\newcommand{\cE}{\mathcal{E}}
\newcommand{\al}{\alpha}
\newcommand{\be}{\beta}
\newcommand{\gm}{\gamma}
\newcommand{\dl}{\delta}
\newcommand{\Dl}{\Delta}
\newcommand{\lm}{\lambda}
\newcommand{\Lm}{\Lambda}
\newcommand{\varep}{\varepsilon}
\newcommand{\sig}{\sigma}
\newcommand{\om}{\omega}
\newcommand{\Om}{\Omega}
\newcommand{\z}{\zeta}
\newcommand{\eps}{\varepsilon}
\newcommand{\loc}{\mathrm{loc}}
\newcommand{\pl}{\partial}
\DeclareMathOperator{\supp}{supp}
\DeclareMathOperator*{\osc}{osc}
\renewcommand{\d}{\textnormal{\,d}}
\newcommand{\average}{{\mathchoice {\kern1ex\vcenter{\hrule height.4pt
width 6pt depth0pt} \kern-9.7pt} {\kern1ex\vcenter{\hrule
height.4pt width 4.3pt depth0pt} \kern-7pt} {} {} }}
\newcommand{\dashint}{\average\int}
\def\Xint#1{\mathchoice
    {\XXint\displaystyle\textstyle{#1}}%
    {\XXint\textstyle\scriptstyle{#1}}%
    {\XXint\scriptstyle\scriptscriptstyle{#1}}%
    {\XXint\scriptscriptstyle\scriptscriptstyle{#1}}%
    \!\int}
\def\XXint#1#2#3{\setbox0=\hbox{$#1{#2#3}{\int}$}
    \vcenter{\hbox{$#2#3$}}\kern-0.5\wd0}
\def\bint{\Xint-}
\def\dashint{\Xint{\raise4pt\hbox to7pt{\hrulefill}}}
\def\tmint{\Xint{\raise0pt\hbox to6pt{\hrulefill}}}
\def\XXiint#1#2#3{\setbox0=\hbox{$#1{#2#3}{\iint}$}
    \vcenter{\hbox{$#2#3$}}\kern-0.5\wd0}
\begin{document}
\allowdisplaybreaks
\title[Time-insensitive nonlocal parabolic Harnack estimates]{Time-insensitive nonlocal parabolic Harnack estimates}

\author{Naian Liao}
 
\author{Marvin Weidner}

\address{Fachbereich Mathematik, Paris-Lodron-Universit\"at Salzburg, Hellbrunner Str, 24, 5020 Salzburg, Austria}
\email{naian.liao@plus.ac.at}

\address{Departament de Matem\`atiques i Inform\`atica, Universitat de Barcelona, Gran Via de les Corts Catalanes 585, 08007 Barcelona, Spain}
\email{mweidner@ub.edu}
\urladdr{https://sites.google.com/view/marvinweidner/}

\keywords{nonlocal, parabolic equations, Harnack's inequality}

\subjclass[2020]{47G20, 35B65, 35R09, 31B05}

\allowdisplaybreaks

\begin{abstract}
We establish new Harnack estimates that defy the waiting-time phenomenon for global solutions to nonlocal parabolic equations. Our technique allows us to consider general nonlocal operators with bounded measurable coefficients. Moreover, we show that a waiting-time is required for the nonlocal parabolic Harnack inequality when local solutions are considered.
\end{abstract}

\allowdisplaybreaks

\maketitle


\section{Introduction}

The Harnack inequality for nonnegative solutions to the heat equation
\begin{align*}
    \partial_t u - \Delta u = 0 ~~ \text{ in } (0,1] \times \R^d
\end{align*}
was proved independently by Hadamard \cite{Had54} and Pini \cite{Pin54}, and it can be stated as follows
\begin{align*}
    \sup_{(\theta_1 \tau, \theta_2 \tau] \times B_{\tau^{1/2}}(x_o)} u \le c \inf_{(\theta_3 \tau, \theta_4 \tau] \times B_{\tau^{1/2}}(x_o)} u \qquad \forall\, 0 < \tau < 1, ~~ \forall\, x_o \in \R^d,
\end{align*}
where $0 < \theta_1 < \theta_2 < \theta_3 < \theta_4 \le 1$ are arbitrary and the constant $c > 0$ depends only on $d$ and $\{\theta_i:\,i=1,2,3,4\}$. A deep result of Moser (see \cite{Mos64,Mos71}) shows that the Harnack inequality continues to hold for solutions to parabolic equations with bounded and measurable coefficients.

It is well-known that the constant $c > 0$ in the Harnack inequality must blow up if the distance between the two time intervals tends to zero, i.e. when $\theta_3 - \theta_2 \to 0$. This distance is referred to as ``waiting-time" or ``time-lag" in the literature. Such waiting-time is 
\textit{characteristic}
for second order linear parabolic equations and it is closely related to the Gaussian decay $\sim e^{-|x|^2}$ of the fundamental solution (see \cite[page 103]{Mos64}).

\subsection{Main results}

In this article, we prove that -- in stark contrast to the local case -- Harnack inequalities hold true \textit{without any waiting-time} for \textit{linear nonlocal} parabolic equations modeled upon the fractional Laplacian. 

To be precise, we are interested in equations of the form
\begin{equation}\label{Eq:1:1}
    \partial_t u(t,x) - \mathcal{L}_t u(t,x)=0\quad\text{in}\quad (0,T]\times \R^d
\end{equation}
for some $T>0$ and $d\in\N$,
where the integro-differential operator $\mathcal{L}_t$ is given by
\begin{align}
\label{eq:op}
    -\mathcal{L}_t u(t, x) = \text{p.v.}~\int_{\R^d} (u(t, x) - u(t, y)) K(t;x,y) \d y
\end{align}
for a measurable kernel $K: \R \times \R^d \times \R^d \to [0,\infty]$, which satisfies for some $0<\lm\le\Lm$ that
\begin{align}\label{eq:Kcomp}
\frac{\lambda}{|x-y|^{d+2s}} \le K(t;x,y) = K(t;y,x) \le \frac{\Lambda}{|x-y|^{d+2s}}.
\end{align}
These operators $\mathcal{L}_t$ are naturally associated with energy functionals and can therefore be regarded as nonlocal operators in divergence form with bounded, measurable, and uniformly elliptic coefficients \eqref{eq:Kcomp}. 

 Our main result is the following time-insensitive Harnack inequality for weak solutions to \eqref{Eq:1:1}.

\begin{theorem}[time-insensitive Harnack inequality]\label{Thm:0}
    Assume that the kernel $K$ satisfies \eqref{eq:Kcomp}. Let $u \ge 0$ be a global weak solution to \eqref{Eq:1:1} in $(0,T] \times \R^d$ in the sense of \autoref{Def:global-sol}.  Then, there exists a constant $c>1$ depending on the data $\{d,s,\lm,\Lm\}$, such that for any $\tau \in (0,T]$ and any $x_o \in \R^d$, we have
     \begin{align}
     \label{eq:thm-0-1}
\sup_{(\frac14\tau,\tau]\times B_{\tau^{1/2s}}(x_o)} u \le c\, \tau \int_{\R^d} \frac{u(\tau,x)}{(\tau^{1/2s}+|x-x_o|)^{d+2s} }\d x
     \end{align}
     and  
     \begin{align}
          \label{eq:thm-0-2}
        c\, \inf_{(\frac34\tau,\tau]\times  B_{\tau^{1/2s}}(x_o)} u \ge \tau \int_{\R^d} \frac{u(\tau,x)}{(\tau^{1/2s}+|x-x_o|)^{d+2s} }\d x.
     \end{align}
     Consequently, we have
          \begin{align}
               \label{eq:thm-0-3}
     \sup_{(\frac14\tau,\tau]\times B_{\tau^{1/2s}}(x_o)} u \le c \inf_{(\frac34\tau,\tau]\times  B_{\tau^{1/2s}}(x_o)} u
     \end{align}
     for some constant $c=c(d,s,\lm,\Lm)$.
\end{theorem}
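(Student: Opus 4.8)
\emph{Step 1 (reduction and time-comparability of the weighted mass).} By translation and the parabolic scaling $u_\rho(t,x):=u(\rho^{2s}t,\rho x)$ — again a solution of \eqref{Eq:1:1} for a kernel obeying \eqref{eq:Kcomp} — it suffices to treat $\tau=1$, $x_o=0$. Fix the smooth weight $w(x):=(1+|x|^2)^{-(d+2s)/2}\asymp(1+|x|)^{-(d+2s)}$ and set $A:=\int_{\R^d}u(1,y)\,w(y)\d y$; then the right-hand sides of \eqref{eq:thm-0-1} and \eqref{eq:thm-0-2} are both $\asymp A$, and it remains to prove
\[
\sup_{(1/4,\,1]\times B_1}u\ \lesssim\ A\ \lesssim\ \inf_{(3/4,\,1]\times B_1}u,
\]
with constants depending only on $\{d,s,\lm,\Lm\}$, after which \eqref{eq:thm-0-3} is their composition. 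Here $A<\infty$ since $u$ is a \emph{global} weak solution in the sense of \autoref{Def:global-sol}. The first thing I would establish is that $\int_{\R^d}u(t,\cdot)\,w\asymp A$ for all $t\in(\tfrac18,1]$: testing \eqref{Eq:1:1} with $w$ and using $K(t;x,y)=K(t;y,x)$ gives $\tfrac{\d}{\d t}\int u(t,\cdot)w=\int u(t,\cdot)\,\mathcal L_t w$, while splitting the integral defining $\mathcal L_t w$ into a near-diagonal part (controlled by the smoothness of $w$) and a far part (controlled by $w(y)\asymp|y|^{-(d+2s)}$ and \eqref{eq:Kcomp}) yields $|\mathcal L_t w|\le C\,w$ pointwise; Grönwall over the bounded interval $(\tfrac18,1]$ then gives the comparability. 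This is the device that lets the single time slice $A$ replace space--time averages throughout.

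\emph{Step 2 (the supremum bound).} I would derive $\sup_{(1/4,1]\times B_1}u\lesssim A$ from the by-now classical local boundedness estimate for nonnegative weak solutions of \eqref{Eq:1:1}, which bounds the supremum over a smaller sub-cylinder by the space--time average of $u$ over a cylinder $(t_0-r^{2s},t_0]\times B_r(x_0)$ plus the supremum over that time interval of the tail $r^{2s}\int_{\R^d\setminus B_r(x_0)}u(t,y)\,|x_0-y|^{-(d+2s)}\d y$. Covering $(1/4,1]\times B_1$ by finitely many (the number depending only on $d,s$) such sub-cylinders whose full cylinders lie in $(\tfrac18,1]\times B_2$ with $r\asymp1$, one checks that on each of them both contributions are $\lesssim\sup_{t\in(1/8,1]}\int_{\R^d}u(t,\cdot)\,w$ (using $w\gtrsim1$ on $B_r$ and $w(y)\asymp|y|^{-(d+2s)}\asymp|x_0-y|^{-(d+2s)}$ off $B_r$), and this is $\lesssim A$ by Step 1.

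\emph{Step 3 (the infimum bound; the main obstacle).} Everything hinges on an \emph{instantaneous, polynomially decaying positivity estimate}: for every nonnegative global weak solution $v$ on $(t_1,t_2]\times\R^d$,
\[
v(t,x)\ \gtrsim\ (t_2-t_1)\int_{\R^d}\frac{v(t_1,y)}{\big((t_2-t_1)^{1/2s}+|x-y|\big)^{d+2s}}\,\d y\qquad\text{for }t\in\big(\tfrac{t_1+t_2}{2},t_2\big],\ x\in\R^d,
\]
i.e.\ an explicit heat-kernel-type lower bound. Applying this with $(t_1,t_2)=(\tfrac18,1)$ — so that $(t_2-t_1)^{1/2s}\asymp1$, the admissible time interval contains $(3/4,1]$, and $(t_2-t_1)^{1/2s}+|x-y|\asymp1+|y|$ for $x\in B_1$ — and then using Step 1 to pass from time $\tfrac18$ to the normalization at time $1$, one obtains $u(t,x)\gtrsim\int_{\R^d}u(\tfrac18,\cdot)\,w\gtrsim A$ on $(3/4,1]\times B_1$, which is the desired bound. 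To prove the positivity estimate I would split according to $|x-y|\lesssim(t_2-t_1)^{1/2s}$ versus $|x-y|\gg(t_2-t_1)^{1/2s}$: the near part follows from the nonlocal weak Harnack inequality for nonnegative supersolutions at the unit scale, whose tail error $\tail(u_-;\cdot)$ vanishes because $u\ge0$; the far part requires showing that the far-field mass $\int_{\R^d\setminus B_r(x)}v(t_1,y)\,|x-y|^{-(d+2s)}\d y$ is transferred into a matching pointwise lower bound for $v(t,x)$ \emph{within the same time scale} $t_2-t_1$. This is exactly where the long range of the kernel \eqref{eq:Kcomp} supplants — and defeats — the Gaussian decay of the local heat kernel responsible for the waiting time in Moser's theorem, and it is the principal difficulty; I expect to prove it by a De Giorgi-type level-set iteration in which, for a nonnegative supersolution, the nonlocal tail enters the energy (Caccioppoli) inequality with a favorable sign and so acts as a genuine source of positivity.

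\emph{Step 4 (conclusion).} Composing $\sup_{(1/4,1]\times B_1}u\lesssim A$ with $A\lesssim\inf_{(3/4,1]\times B_1}u$ and undoing the scaling yields \eqref{eq:thm-0-3}. No time-lag enters because both one-sided estimates are anchored to the \emph{single} spatial functional $A=\tau\int_{\R^d}u(\tau,\cdot)\,(\tau^{1/2s}+|\cdot-x_o|)^{-(d+2s)}$ at time $\tau$; consequently the supremum- and infimum-cylinders are allowed to overlap, and the constant stays bounded even as the infimum is taken over the larger set $(\tfrac14\tau,\tau]\times B_{\tau^{1/2s}}(x_o)$.
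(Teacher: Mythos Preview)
Your overall architecture---scale to $\tau=1$, establish time-comparability of the weighted mass $\int u(t,\cdot)\,w$, then feed this into local boundedness and weak Harnack---is exactly the paper's. The gap is in Step~1, and it is the heart of the whole proof. You claim $|\mathcal{L}_t w|\le C\,w$ pointwise, with the near-diagonal contribution ``controlled by the smoothness of $w$''. For $s\ge\tfrac12$ and a merely measurable kernel this fails: \eqref{eq:Kcomp} imposes only $K(t;x,y)=K(t;y,x)$, \emph{not} $K(t;x,x+h)=K(t;x,x-h)$, so the first-order Taylor term $\nabla w(x)\cdot(y-x)\,K(t;x,y)$ has no reason to possess a convergent principal value, and $\mathcal{L}_t w(x)$ can diverge even for $w\in C_c^\infty(\R^d)$. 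The paper flags this explicitly (\autoref{Rmk:L1-weight}), noting the same oversight elsewhere in the literature. A second, related obstruction is that $w$ is not compactly supported while $u$ is only in $H^s_{\loc}$, so ``testing \eqref{Eq:1:1} with $w$'' is not licensed by the weak formulation; one must cut off by $\xi_R$ and control the remainder as $R\to\infty$, and that is where the real work lies.

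The paper's fix is to replace the static weight $w$ by the solution $\Psi$ of the dual problem $\partial_t\Psi+\mathcal{L}_t\Psi=0$ on $(0,T)$ with final datum $(1+|\cdot|)^{-d-2s}$. After cutting off and using the algebraic identity \eqref{eq:algebra-chain-rule}, the diagonal energies of $u$ and $\Psi$ cancel exactly, leaving only commutator terms involving $\xi_R(x)-\xi_R(y)$; these are shown to vanish as $R\to\infty$ (\autoref{lemma:I1}, \autoref{lemma:I2}) via a delicate covering argument built on a new energy-decay estimate for supersolutions (\autoref{lemma:energy-decay}). This yields that $t\mapsto\int u(t)\Psi(t)$ is \emph{exactly constant}, not merely Gr\"onwall-comparable, and the desired $L^1(\R^d;\mu)$ comparability then follows from the two-sided bound $\Psi(t,x)\asymp(1+|x|)^{-d-2s}$ (\autoref{lemma:Psi-bounds}), whose upper half requires the heat-kernel upper bound via a representation formula. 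With Step~1 repaired in this way, your Steps~2--4 are essentially the paper's: \autoref{Prop:bd:2} and \autoref{Prop:WHI-global} deliver the sup and inf bounds with a time-\emph{averaged} weighted integral on the right, and the time-insensitive $L^1$ estimate converts that average to the single slice at $\tau$---so there is no need for the separate ``instantaneous positivity'' programme you sketch in Step~3.
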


The Harnack inequality \eqref{eq:thm-0-3} is time-insensitive since the supremum and infimum of solutions are comparable over time intervals with a nonempty intersection.
We emphasize that the first two estimates \eqref{eq:thm-0-1} and \eqref{eq:thm-0-2} are ``representation-like''; they are of purely nonlocal nature, since they involve global and local quantities at the same time. Both estimates defy the waiting-time phenomenon. Note that \eqref{eq:thm-0-2} does not follow from the Harnack inequality \eqref{eq:thm-0-3} since it contains a global integral and is therefore of particular interest. 

As a corollary, we have the following elliptic-type Harnack inequality: 

\begin{theorem}[elliptic-type Harnack inequality]\label{Thm:1}
     Assume that the kernel $K$ satisfies \eqref{eq:Kcomp}. Let $u \ge 0$ be a global weak solution to \eqref{Eq:1:1} in $(0,T] \times \R^d$ in the sense of \autoref{Def:global-sol}. Then, there exists a constant   $c>1$ depending on the data $\{d,s,\lm,\Lm\}$, such that for any $\tau \in (0,T]$ and any $x_o \in \R^d$, we have
         \begin{align*}
      \sup_{B_{\tau^{1/2s}}(x_o)} u(\tau,\cdot)\le c\, \tau \int_{\R^d} \frac{u(\tau,x)}{(\tau^{1/2s}+|x-x_o|)^{d+2s} }\d x
     \end{align*}
      and
     \begin{align*}
     c\, \inf_{B_{\tau^{1/2s}}(x_o)} u(\tau,\cdot)\ge \tau \int_{\R^d} \frac{u(\tau,x)}{(\tau^{1/2s}+|x-x_o|)^{d+2s} }\d x.
     \end{align*}
     Consequently, we have
     \begin{align*}
         \sup_{B_{\tau^{1/2s}}(x_o)} u(\tau,\cdot) \le c \inf_{B_{\tau^{1/2s}}(x_o)} u(\tau,\cdot)
     \end{align*}
     for some constant $c=c(d,s,\lm,\Lm)$.
\end{theorem}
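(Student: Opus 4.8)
The plan is to obtain \autoref{Thm:1} as an immediate consequence of \autoref{Thm:0}, by restricting the space--time supremum and infimum in \eqref{eq:thm-0-1} and \eqref{eq:thm-0-2} to the single time slice $\{t=\tau\}$. Since $\tau\in(\tfrac14\tau,\tau]$, the slice $\{\tau\}\times B_{\tau^{1/2s}}(x_o)$ is contained in the parabolic cylinder $(\tfrac14\tau,\tau]\times B_{\tau^{1/2s}}(x_o)$ appearing in \eqref{eq:thm-0-1}, so that estimate already yields
\[
\sup_{B_{\tau^{1/2s}}(x_o)} u(\tau,\cdot)\le \sup_{(\frac14\tau,\tau]\times B_{\tau^{1/2s}}(x_o)} u\le c\,\tau\int_{\R^d}\frac{u(\tau,x)}{(\tau^{1/2s}+|x-x_o|)^{d+2s}}\d x,
\]
which is exactly the first asserted bound.

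For the second bound I would argue symmetrically, now using the inclusion $\{\tau\}\times B_{\tau^{1/2s}}(x_o)\subset(\tfrac34\tau,\tau]\times B_{\tau^{1/2s}}(x_o)$: passing to the smaller set only increases the infimum, so \eqref{eq:thm-0-2} gives
\[
c\,\inf_{B_{\tau^{1/2s}}(x_o)} u(\tau,\cdot)\ge c\,\inf_{(\frac34\tau,\tau]\times B_{\tau^{1/2s}}(x_o)} u\ge \tau\int_{\R^d}\frac{u(\tau,x)}{(\tau^{1/2s}+|x-x_o|)^{d+2s}}\d x.
\]
Chaining the two displays and renaming the constant then produces the elliptic-type Harnack inequality $\sup_{B_{\tau^{1/2s}}(x_o)} u(\tau,\cdot)\le c\,\inf_{B_{\tau^{1/2s}}(x_o)} u(\tau,\cdot)$. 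Note that it is the one-sided estimates \eqref{eq:thm-0-1} and \eqref{eq:thm-0-2} that are used here; the space--time Harnack inequality \eqref{eq:thm-0-3} itself is not needed.

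Since all of the analytic work is already contained in \autoref{Thm:0}, the only point that will require a word of care is the meaning of the pointwise quantity $u(\tau,\cdot)$ and of the supremum and infimum over the measure-zero time slice $\{\tau\}\times B_{\tau^{1/2s}}(x_o)$. This is legitimate because weak solutions of \eqref{Eq:1:1} possess a locally (H\"older) continuous representative, which is part of the regularity theory already invoked in the proof of \autoref{Thm:0}; working throughout with that representative, the set inclusions above are literal and no further estimate is required. I therefore do not expect any genuine obstacle in this corollary: it is a direct specialization of \autoref{Thm:0} to a fixed time level.
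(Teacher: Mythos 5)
Your proposal is correct and matches the paper's (implicit) approach: Theorem \ref{Thm:1} is presented in the paper simply as a corollary of Theorem \ref{Thm:0}, and the derivation you give --- restricting the space--time supremum and infimum in \eqref{eq:thm-0-1} and \eqref{eq:thm-0-2} to the single time slice $\{t=\tau\}$, which lies in both $(\tfrac14\tau,\tau]$ and $(\tfrac34\tau,\tau]$ --- is exactly the intended argument. Your remark on working with the continuous representative of $u$ is the right point of care and closes the only subtlety.
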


Another direct consequence is a Liouville-type result.
\begin{theorem}
    Assume that the kernel $K$ satisfies \eqref{eq:Kcomp}. Let $u$ be a global weak solution to \eqref{Eq:1:1} in $(-\infty,T] \times \R^d$ in the sense of \autoref{Def:global-sol}. If $u\ge k$ or $u\le k$ for some $k\in\R$, then $u$ must be constant.
\end{theorem}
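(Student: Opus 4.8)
The plan is to reduce to a nonnegative ancient solution and then exploit that the Harnack inequality \eqref{eq:thm-0-3}, once shifted in time, has the structure of an \emph{elliptic} Harnack inequality: the infimum there is taken over a space-time cylinder contained in the one over which the supremum is taken, and both cylinders share the same final time slice. Sending the time scale to infinity will then compare the global supremum of a nonnegative solution with its global infimum, after which the familiar ``$M-u$'' trick forces constancy. In the local setting this fails precisely because the parabolic Harnack inequality carries an unavoidable waiting time, so this is where the time-insensitivity is decisive.

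First I would replace $u$ by $u-k$ or $k-u$, reducing to the case of a nonnegative global weak solution $v\ge 0$ to \eqref{Eq:1:1} in $(-\infty,T]\times\R^d$ in the sense of \autoref{Def:global-sol}; this is legitimate because the weak formulation is linear in the unknown, $\mathcal{L}_t$ annihilates constants, and a constant has finite tail, so these operations preserve the solution class. It then suffices to show that $v$ is constant. Next, for any $\sigma\le T$ and $\tau>0$ I would apply \eqref{eq:thm-0-3} to the time-translated function $(t,x)\mapsto v(t+\sigma-\tau,x)$, which is a nonnegative global weak solution on $(0,T-\sigma+\tau]\times\R^d\supseteq(0,\tau]\times\R^d$ of the equation with the shifted kernel (still satisfying \eqref{eq:Kcomp}). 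This gives, for every $x_o\in\R^d$,
\begin{equation}\label{eq:liou-shift}
\sup_{(\sigma-\frac34\tau,\,\sigma]\times B_{\tau^{1/2s}}(x_o)} v \;\le\; c\inf_{(\sigma-\frac14\tau,\,\sigma]\times B_{\tau^{1/2s}}(x_o)} v ,
\end{equation}
with $c=c(d,s,\lm,\Lm)>1$ independent of $\sigma$, $\tau$ and $x_o$.

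I would then take $\sigma=T$ in \eqref{eq:liou-shift}, fix $x_o$, and let $\tau\to\infty$: since the cylinders on both sides increase to all of $(-\infty,T]\times\R^d$, this yields
\begin{equation}\label{eq:liou-global}
\sup_{(-\infty,T]\times\R^d} v \;\le\; c\inf_{(-\infty,T]\times\R^d} v .
\end{equation}
In particular $M:=\sup_{(-\infty,T]\times\R^d} v$ is finite, since $\inf_{(-\infty,T]\times\R^d} v\ge 0$ is finite. Finally I would set $w:=M-v$, so that $0\le w\le M$; as before, $w$ is again a nonnegative global weak solution to \eqref{Eq:1:1} in $(-\infty,T]\times\R^d$ (its tail is finite because $w$ is bounded), and $\inf_{(-\infty,T]\times\R^d} w=M-\sup_{(-\infty,T]\times\R^d} v=0$. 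Applying \eqref{eq:liou-global} to $w$ gives $\sup_{(-\infty,T]\times\R^d} w\le c\inf_{(-\infty,T]\times\R^d}w=0$, hence $w\equiv 0$, i.e. $v\equiv M$, so $u$ is constant.

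I expect no genuine obstacle here. The points that need (routine) care are the stability of the solution class of \autoref{Def:global-sol} under adding a constant and under time translation; the passage $\tau\to\infty$ in \eqref{eq:liou-shift}, which rests on the monotone exhaustion of $(-\infty,T]\times\R^d$ by the relevant space-time cylinders together with the $\tau$-independence of $c$; and, if a continuous representative is not available a priori, reading all suprema and infima as essential ones. The heart of the matter is simply recognizing that the time-insensitive Harnack inequality \eqref{eq:thm-0-3} is elliptic-type on cylinders sharing a common top time slice, which is exactly what fails in the local setting and what makes the Liouville property available.
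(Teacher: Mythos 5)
Your proof is correct and is the ``direct consequence'' the paper alludes to but does not print: reduce to a nonnegative ancient solution, apply the time-insensitive Harnack inequality \eqref{eq:thm-0-3} to backward time-translates (both cylinders end at the common time $T$, so as $\tau\to\infty$ they exhaust $(-\infty,T]\times\R^d$, which is exactly where time-insensitivity is decisive) to obtain the global comparison $\sup_{(-\infty,T]\times\R^d}v\le c\inf_{(-\infty,T]\times\R^d}v$, and then conclude by the standard subtraction trick. A marginal simplification is available: since $m:=\inf v$ is trivially finite ($v\ge0$), one may apply the global comparison directly to $v-m$ rather than first deducing $M<\infty$ and passing to $M-v$, but this does not change the substance of the argument.
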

Such a property is again in stark contrast to the case of the heat equation where one-sided bounds of solutions are insufficient to claim that they are constant.

Parabolic Harnack inequalities for \textit{local} solutions to nonlocal equations, i.e.
\begin{align}
\label{eq:local-sol}
    \partial_t u - \mathcal{L}_t u = 0 ~~ \text{ in } (0,1] \times B_2
\end{align}
with $\mathcal{L}_t$ as in \eqref{eq:op}, \eqref{eq:Kcomp} have recently been established in \cite{KaWe24c} via variational techniques (see also \cite{BaLe02,ChKu03,CKW20} for a probabilistic approach). For every $\tau \in (0,1]$ it holds
\begin{align}
\label{eq:local-Harnack}
    c^{-1}\sup_{(\frac{2}{5}\tau,\frac{3}{5}\tau]\times B_{\tau^{1/2s}}} u \le  \int_{\frac{1}{5}\tau}^{\frac{3}{5}\tau} \int_{\R^d} \frac{u(t,x)}{(\tau^{1/2s}+|x|)^{d+2s} }\d x \d t \le c\,  \inf_{(\frac45\tau,\tau]\times  B_{\tau^{1/2s}}} u.
\end{align}
Note that the Harnack inequality for local solutions exhibits a waiting-time. In particular, all three estimates \eqref{eq:thm-0-1}, \eqref{eq:thm-0-2}, and \eqref{eq:thm-0-3} in \autoref{Thm:0} are time-insensitive versions of \eqref{eq:local-Harnack}.

In Subsection \ref{subsec:counterexample}, we prove that the \textit{waiting-time is necessary} for a Harnack inequality to hold true \textit{in the case of local solutions}. 
In fact, we construct an example, demonstrating that a time-insensitive Harnack inequality for local solutions is false. Hence, \autoref{Thm:0} describes a \textit{purely nonlocal phenomenon} that only applies to \textit{global solutions}. 

Intuitively, the reason for the time-insensitivity of global parabolic Harnack inequalities for nonlocal equations is the polynomial decay of the heat kernel. Indeed, the fundamental solution $(t,x,y) \mapsto p_t(x,y)$ to \eqref{Eq:1:1} with $\mathcal{L}_t$ as in \eqref{eq:op} and 
$K(t;x,y)=K(x,y)$ as in \eqref{eq:Kcomp} enjoys the following two-sided bounds
\begin{align}
\label{eq:heat-kernel-intro}
    C^{-1} \left( t^{-\frac{d}{2s}} \wedge \frac{t}{|x-y|^{d+2s}} \right) \le p_t(x,y) \le C \left( t^{-\frac{d}{2s}} \wedge \frac{t}{|x-y|^{d+2s}} \right),
\end{align}
see \cite{ChKu03,CKW22}. As a consequence, a straightforward computation reveals that $p$ (respectively its two-sided bound) satisfies \eqref{eq:thm-0-3}. This phenomenon is in stark contrast to the local equation case, where the Gaussian decay of the fundamental solution makes time-insensitivity impossible. 

In the case of the fractional heat equation $\pl_t u +(-\Dl)^{\frac12}u=0$, we give another heuristic proof of the time-insensitive Harnack estimate. Assume that a solution $u$ admits a smooth initial datum $u(0,\cdot)$. Then, applying Fourier transform we obtain 
$$
u(x,t)=\int_{\R^d} P(t,x-y)u(0,y)\d y\quad \text{with}\> P(t,x-y)=c(d)\frac{t}{(t^2+|x-y|^2)^{\frac{d+1}{2}}}.
$$
The Poisson kernel $P$, a harmonic function in the upper-half space, satisfies the two-sided bounds in \eqref{eq:heat-kernel-intro}. Harnack estimate \eqref{eq:thm-0-3} follows from the harmonicity of $u$ in view of the above representation, whereas \eqref{eq:thm-0-1} and \eqref{eq:thm-0-2} can be deduced from the representation formula and the semigroup property of the Poisson kernel.

Let us mention \cite[Theorem~8.2]{BSV17}, where the authors observed a version of the time-insensitive Harnack estimate for the fractional heat equation, i.e. in case $\mathcal{L}_t = -(-\Delta)^s$. In that paper, they suggested an approach via a representation formula for solutions to the Cauchy problem and then deduced a Harnack estimate from \eqref{eq:heat-kernel-intro}. However, the initial data are assumed to be integrable functions and to decay at infinity like $(1 + |x|)^{-d-2s}$, which unfortunately excludes the fundamental solution from their framework. 

In contrast,  \autoref{Thm:0} holds true for any global weak solution to \eqref{Eq:1:1} (see \autoref{Def:global-sol}) and is therefore new even for the fractional heat equation. More importantly, we go beyond that and establish the time-insensitive Harnack inequality for general nonlocal operators $\mathcal{L}_t$ with bounded and measurable coefficients.   

A key feature of our approach is that it keeps working for an \textit{even more general class} of kernels $K$, which might \textit{violate the lower bound} in \eqref{eq:Kcomp}. It is well-known that for such general class of kernels the matching two-sided heat kernel bounds \eqref{eq:heat-kernel-intro} are no longer valid (see \cite{BoSz05,CKW21,CKW22}).
Therefore, it is no longer possible to deduce a time-insensitive Harnack inequality directly from a representation formula for solutions to \eqref{Eq:1:1} and a genuinely new approach is needed to deal with this generality.
We present this extension of \autoref{Thm:0} in Subsection \ref{subsec:intro-extension} and explain our strategy of proof in Subsection \ref{subsec:strategy}.

\subsection{Extension of the main results}
\label{subsec:intro-extension}

Instead of the pointwise lower bound in \eqref{eq:Kcomp}, we consider the following more general coercivity condition
\begin{equation}\label{eq:coercive}
    \iint_{B_r(x_o)\times B_r(x_o)} |v(x) - v(y)|^2 K(t;x,y) \d y \d x \ge \lm\, [v]^2_{H^s(B_r(x_o))}
\end{equation}
for any $t\in \R$, $r > 0$, $x_o \in \R^d$, and any $v\in H^s(B_r(x_o))$. Such coercivity condition is natural since it implies that the corresponding nonlocal energy still verifies functional inequalities, such as a Poincar\'e--, Faber-Krahn--, and Sobolev inequality. 
Moreover, we consider the following assumption, which is sometimes referred to as (UJS) and prevents $K$ from possessing oscillatory long jumps
\begin{align}
    \label{eq:UJS}
    K(t;x,y) \le \Lambda\, \bint_{B_r(x)} K(t;z,y) \d z
\end{align}
for every $x,y \in \R^d$, $t \in \R$ and $r \le ( \frac{1}{4} \wedge \frac{|x-y|}{4} )$. 

It was shown in \cite{KaWe24c} that the nonlocal parabolic Harnack inequality \eqref{eq:local-Harnack} with time-lag remains true for solutions to \eqref{eq:local-sol} if $K$ satisfies the upper bound in \eqref{eq:Kcomp}, \eqref{eq:coercive} and \eqref{eq:UJS}. Moreover, it follows from the results in \cite{CKW20,CKW21,CKW22} that these conditions are (almost) sharp for the validity of the nonlocal parabolic Harnack inequality.

A key advantage of our technique is that it keeps working under these more general conditions on $K$. Indeed, we have the following theorem.

\begin{theorem}
\label{thm:general}
    Assume that the kernel $K$ satisfies the upper bound in \eqref{eq:Kcomp} and the coercivity condition \eqref{eq:coercive}. Let $u \ge 0$ be a global weak solution to \eqref{Eq:1:1} in $(0,T] \times \R^d$ in the sense of \autoref{Def:global-sol}. Then, there exists a constant   $c>1$ depending on the data $\{d,s,\lm,\Lm\}$, such that for any $\tau \in (0,T]$ and any $x_o \in \R^d$ the estimate \eqref{eq:thm-0-1} holds true.
\end{theorem}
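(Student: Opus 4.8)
The plan is to prove the "representation-like'' upper estimate \eqref{eq:thm-0-1} using only the upper bound in \eqref{eq:Kcomp} and the coercivity condition \eqref{eq:coercive}, i.e.\ without any lower pointwise bound on $K$ and hence without recourse to heat kernel two-sided bounds. The starting point is the De Giorgi--Nash--Moser machinery that is already available under these hypotheses: from \eqref{eq:coercive} one has the full suite of functional inequalities (Poincar\'e, Faber--Krahn, Sobolev) for the nonlocal energy, so that a local boundedness estimate (a parabolic $L^\infty$--$L^2$ or $L^\infty$--$L^1$ bound with a nonlocal tail correction) holds for nonnegative subsolutions. Concretely, I expect to use an estimate of the shape
\begin{align*}
\sup_{(\tfrac14\tau,\tau]\times B_{\rho}(x_o)} u
\ \le\ C\Big( \Big(\dashint_{(0,\tau]\times B_{2\rho}(x_o)} u^p \Big)^{1/p} + \sup_{t\in(0,\tau]}\tail(u_+;x_o,2\rho)(t)\Big),
\end{align*}
with $\rho=\tau^{1/2s}$ and the tail $\tail(u;x_o,R)(t)=R^{2s}\int_{\R^d\setminus B_R(x_o)}\frac{u(t,x)}{|x-x_o|^{d+2s}}\d x$; here $p\in(0,1)$ can be taken small (or $p=1$) after the usual fast-geometric-convergence / Krylov--Safonov-type argument. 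The right-hand side of \eqref{eq:thm-0-1} is, up to the constant $c$ and up to replacing the time slice $\{\tau\}$ by an average over $(0,\tau]$, exactly $c$ times the quantity $\tau\int_{\R^d}\frac{u(\tau,x)}{(\tau^{1/2s}+|x-x_o|)^{d+2s}}\d x$, which controls \emph{both} the interior average $\dashint u$ over $B_{2\rho}$ \emph{and} the tail. So morally the theorem says: the local $\sup$ is controlled by this single global weighted integral, \emph{at the single time $\tau$}.

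The two genuinely nonlocal/non-routine points are: (i) upgrading the interior integral average appearing in local boundedness to the \emph{weighted global integral at a single time slice}, and (ii) removing the time-averaging so that the right side only sees $u(\tau,\cdot)$. For (i), note $\tau\int_{\R^d}\frac{u(\tau,x)}{(\tau^{1/2s}+|x-x_o|)^{d+2s}}\d x \asymp \dashint_{B_{\rho}(x_o)}u(\tau,\cdot) + \sup$-type tail at time $\tau$, so what one must show is that the interior average $\dashint_{(0,\tau]\times B_{2\rho}}u$ in the $L^\infty$ bound can itself be replaced by its value on the slice $t=\tau$ — this is where one invokes a \emph{weak Harnack / measure-to-pointwise} ingredient for supersolutions, combined with the expansion-of-positivity phenomenon that, for nonlocal equations, happens \emph{instantaneously} in time (no waiting time): positivity of $u(\tau,\cdot)$ on a ball of mass $\delta$ forces a quantitative pointwise lower bound of $u$ on a full cylinder $(\tfrac14\tau,\tau]\times B_\rho$, with constants independent of how the mass is distributed. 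The mechanism is the nonlocal tail term in the energy inequality: a lump of positive mass anywhere contributes a positive zeroth-order source, and iterating De Giorgi level sets backward in time costs only polynomially (not exponentially, as the Gaussian would), so the estimate is time-insensitive. This is precisely the place where the polynomial decay of $K$ (encoded in \eqref{eq:coercive}) is essential and where the argument departs from the classical parabolic theory.

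Thus the skeleton is: (1) local boundedness for nonnegative subsolutions from the functional inequalities implied by \eqref{eq:coercive} and the upper bound in \eqref{eq:Kcomp}, giving $\sup u$ on $(\tfrac14\tau,\tau]\times B_\rho$ in terms of a space--time average plus tail; (2) a quantitative, \emph{time-insensitive} expansion of positivity for nonnegative supersolutions: a positive integral of $u(\tau,\cdot)$ against the weight $\frac{\tau}{(\tau^{1/2s}+|x-x_o|)^{d+2s}}$ yields a pointwise lower bound of $u$ on the \emph{entire} past cylinder $(0,\tau]\times B_{2\rho}$, by a backward De Giorgi iteration whose iteration constant stays uniformly bounded because the tail source decays only polynomially; (3) combine (1) and (2): bound the space--time average and the tail appearing in (1) by the single-slice weighted integral at $t=\tau$ using (2), which produces exactly \eqref{eq:thm-0-1}; and (4) a scaling/translation normalization reducing to $\tau=1$, $x_o=0$ so that the cylinders become unit-size. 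I expect step (2) — the genuinely time-insensitive expansion of positivity, ensuring all iteration constants are independent of the time-lag and controlled only through $d,s,\lm,\Lm$ — to be the main obstacle, since it is exactly here that the absence of a matching lower kernel bound (hence of heat kernel estimates) must be compensated by a direct energy argument exploiting \eqref{eq:coercive}.
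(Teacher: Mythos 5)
Your step (1) --- local boundedness for nonnegative global solutions under only the upper bound in \eqref{eq:Kcomp} plus \eqref{eq:coercive}, controlling $\sup_{(\frac14\tau,\tau]\times B_{\tau^{1/2s}}(x_o)}u$ by a time-averaged weighted global integral --- is correct and is essentially \autoref{Prop:bd:2} after rescaling. The gap is in step (2). You assert a \emph{backward} time-insensitive expansion of positivity: that the weighted integral of $u(\tau,\cdot)$ forces a uniform pointwise lower bound on $u$ over the past cylinder $(0,\tau]\times B_{2\rho}(x_o)$. That is false. Take the fractional heat kernel $u(t,x)=p_{t+\eta}(x,x_1)$ with $|x_1-x_o|\gg\tau^{1/2s}$ and $\eta>0$ small (the time shift makes $u$ an admissible global solution on $(0,\tau]$). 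Then $u(t,x_o)\sim(t+\eta)/|x_1-x_o|^{d+2s}$ is as small as $\eta/|x_1-x_o|^{d+2s}$ near $t=0$, while $\tau\int_{\R^d}u(\tau,x)(\tau^{1/2s}+|x-x_o|)^{-d-2s}\d x\sim\tau/|x_1-x_o|^{d+2s}$, so the purported lower-bound constant degenerates like $\eta/\tau\to 0$. Even the weaker version on $(\frac14\tau,\tau]$ is a positivity (infimum) statement --- precisely the half of the Harnack inequality that \autoref{thm:general} does \emph{not} claim (the paper expects \eqref{eq:thm-0-2} only after additionally assuming \eqref{eq:UJS}) --- and, independently of that, a pointwise lower bound on $u$ does not yield an \emph{upper} bound on the time-averaged integral from step (1), so step (3) would not close even if step (2) held.

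The paper's actual mechanism avoids any positivity ingredient. One solves the dual Cauchy problem $\partial_t\Psi+\mathcal{L}_t\Psi=0$ with terminal datum $\Psi(T,\cdot)=(1+|\cdot|)^{-d-2s}$, proves the two-sided bound $\Psi(t,\cdot)\asymp(1+|\cdot|)^{-d-2s}$ uniformly in $t$ (\autoref{lemma:Psi-bounds}), and then shows that $t\mapsto\int_{\R^d}u(t)\Psi(t)\,\d x$ is constant. The real work is in making $\Psi$ an admissible global test function, via the energy-decay estimate (\autoref{lemma:energy-decay}) and a covering argument; this is where \eqref{eq:coercive} and the upper bound in \eqref{eq:Kcomp} are used, and no lower bound on $K$ enters. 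The outcome is the time-insensitive equivalence $\|u(t)\|_{L^1(\mu)}\asymp\|u(\tau)\|_{L^1(\mu)}$ of \autoref{lemma:weighted-L1-time-insensitive}, after which the time integral in your step (1) is replaced by its value on the slice $t=\tau$, and \eqref{eq:thm-0-1} follows by scaling and translation. In short, replace your step (2) with this duality/conservation argument: it supplies the needed upper-bound direction directly and is available under exactly the hypotheses of \autoref{thm:general}, whereas the expansion-of-positivity ingredient you invoke is false on the full past cylinder and, in its correct form, unavailable at this level of generality.
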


We expect \eqref{eq:thm-0-2} and \eqref{eq:thm-0-3} to hold as well, if \eqref{eq:UJS} is assumed in addition. As already mentioned, in the general setup of \autoref{thm:general}, the two-sided heat kernel bounds \eqref{eq:heat-kernel-intro} do not hold true, which is a key difficulty when adapting the proof to more general kernels. Hence, it would be very interesting to establish \eqref{eq:thm-0-2} and \eqref{eq:thm-0-3} in such a general framework.

\subsection{Strategy of the proof}
\label{subsec:strategy}

The main ingredient in the proof of our time-insensitive Harnack inequality in \autoref{Thm:0} is the following weighted global $L^1$ estimate for nonnegative solutions to \eqref{Eq:1:1}. 
Indeed, we prove in \autoref{lemma:weighted-L1-time-insensitive} that for any $\tau,\, t \in(0, T]$ it holds
\begin{align}
\label{eq:time-insensitive-L1-intro}
    c^{-1} \int_{\R^d} \frac{u(\tau,y)}{(1 + |y|)^{d+2s}} \d y \le \int_{\R^d} \frac{u(t,y)}{(1 + |y|)^{d+2s}} \d y \le c \int_{\R^d} \frac{u(\tau,y)}{(1 + |y|)^{d+2s}} \d y.
\end{align}

Having at hand \eqref{eq:time-insensitive-L1-intro}, \autoref{Thm:0} immediately follows by a combination of the weak parabolic Harnack and local boundedness estimates which hold true for local solutions and were established in \cite{KaWe24c} (see \eqref{eq:local-Harnack}). The waiting-time in the local estimates can be eliminated due to the time-insensitivity of the intrinsically nonlocal estimate \eqref{eq:time-insensitive-L1-intro}.

 We emphasize that also \eqref{eq:time-insensitive-L1-intro} holds true for the general class of kernels satisfying merely the upper bound in \eqref{eq:Kcomp} and the coercivity condition \eqref{eq:coercive}. This indicates that time-insensitive Harnack estimates are not just an artifact of the polynomial decay of the fundamental solution, but are inherently linked with the nonlocal nature of the problem under consideration.

The main idea to prove the time-insensitive weighted $L^1$ estimate in \eqref{eq:time-insensitive-L1-intro} is to test the equation for $u$ with $\Psi$, which is a weak solution to the following dual Cauchy problem 
\begin{align*}
    \begin{cases}
        \partial_t \Psi + \mathcal{L}_t \Psi &= 0 ~~ \text{ in } (0,T) \times \R^d,\\
        \Psi(T,\cdot) &= (1 + |\cdot|)^{-d-2s} ~~ \text{ in } \R^d.
    \end{cases} 
\end{align*}
Then, the main work consists in showing that the function
\begin{align}
\label{eq:dual-product-constant}
    \tau \mapsto \int_{\R^d} u(\tau,x) \Psi(\tau,x) \d x \quad \text{is constant},
\end{align}
which will lead to \eqref{eq:time-insensitive-L1-intro}. Heuristically, if we could insert $\Psi$ as a testing function in the weak formulation of $u$, then we would have for any $t_1,\,t_2\in(0,T)$ that
\begin{align*}
        0 = \int_{\R^d} u  \Psi  \d x \bigg|_{t_1}^{t_2} - \int_{t_1}^{t_2} \int_{\R^d} u\cdot ( \partial_t \Psi + \mathcal{L}_t \Psi )\d x \d t = \int_{\R^d} u  \Psi  \d x \bigg|_{t_1}^{t_2},
\end{align*}
and then the result would follow. However, realizing this idea in a rigorous fashion requires a significant amount of work and involves local estimates of nonlocal energies in terms of $u$ and $\Psi$ coupled with a delicate covering argument. The spirit is that sharp local estimates will tell how solutions behave when we enlarge the domain and approach $\R^d$.

Amongst those local estimates, we would like to highlight the following novel energy decay estimate for supersolutions
\begin{align*}
\int_0^{\tau} \int_{B_{1}} \int_{B_{1}} |u(t,x) - u(t,y)| K(t;x,y)|x-y| \d y \d x \d t \le c \,\tau^{1-\frac{1}{q}} \bigg( \int_0^{2\tau} \int_{B_{2}} u^q \d x \d t\bigg)^{\frac1q},
\end{align*}
where $q>1$.
This estimate is a substantial ingredient in the proof of \eqref{eq:time-insensitive-L1-intro} and also leads to an improved variant of the weak Harnack estimate, which we discuss in Subsection \ref{S:weak-Harnack}. 

Note that it would be much easier to establish \eqref{eq:dual-product-constant} for operators with smooth coefficients, since in that case, it would be possible to evaluate $\mathcal{L}_t \Psi$ in a pointwise way, and therefore one would not have to deal with a double integral structure (see also \autoref{Rmk:L1-weight}). Moreover, since we do not assume any regularity of $K$, it is only known that $\Psi$ is $C^{\alpha}$ for some small $\alpha \in (0,1)$. It would be much easier to prove \eqref{eq:time-insensitive-L1-intro} if $\Psi$ were a smooth function.

Finally, once \eqref{eq:dual-product-constant} is established, \eqref{eq:time-insensitive-L1-intro} follows from the fact that $\Psi$ enjoys the following two-sided bounds (see \autoref{lemma:Psi-bounds})
\begin{align*}
    \frac{c^{-1}}{(1 + |x|)^{d+2s}} \le \Psi(t,x) \le \frac{c}{(1 + |x|)^{d+2s}}.
\end{align*}
The lower estimate directly follows from local estimates.
To prove the upper estimate for $\Psi$ we make use of a representation formula for $\Psi$ in terms of the fundamental solution and apply the upper estimate of the fundamental solution that was proved in \cite{KaWe23}. Interestingly, we were unable to find a proof of the representation formula for solutions to the nonlocal Cauchy problem with $L^2$ initial data in the literature, when $\mathcal{L}_t$ is time-dependent. Therefore, we give a rigorous proof in the appendix of this paper (see \autoref{prop:representation}), using entirely analytic arguments. We believe that this result might be of independent interest.

\subsection{An improved weak Harnack inequality}\label{S:weak-Harnack}

Another main result of this paper is the following improved weak Harnack inequality for nonnegative, {\it local} supersolutions.

\begin{theorem}[improved weak Harnack inequality]
\label{thm:improved-weak-Harnack}
Assume that the kernel $K$ satisfies the upper bound in \eqref{eq:Kcomp} and \eqref{eq:coercive}. Let $u \ge 0$ be a weak supersolution to \eqref{Eq:1:1} in $(t_o , t_o + 8R^{2s}) \times B_{8R}(x_o)$ in the sense of \autoref{Def:local-sol}.  Then, there exists $c>1$ depending on the data $\{d,s,\lm,\Lm\}$, such that
\begin{align*}
\sup_{(t_o, t_o + R^{2s})} \bint_{B_R(x_o)} u(\cdot,x) \d x \le c \, \inf_{(t_o + 2R^{2s}, t_o + 8R^{2s}) \times B_R(x_o)} u.
\end{align*}
\end{theorem}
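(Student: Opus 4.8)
The plan is to upgrade the classical weak Harnack inequality — which in the nonlocal parabolic setting reads
\[
\left( \fint_{t_o + 2R^{2s}}^{t_o + 3R^{2s}} \fint_{B_R(x_o)} u^{q_0} \d x \d t \right)^{1/q_0} \le c \, \inf_{(t_o + 4R^{2s}, t_o + 8R^{2s}) \times B_R(x_o)} u
\]
for some small $q_0 > 0$ (available from \cite{KaWe24c} under the upper bound in \eqref{eq:Kcomp} and \eqref{eq:coercive}) — to a statement controlling the \emph{supremum in time} of the \emph{spatial average} $t \mapsto \fint_{B_R(x_o)} u(t,x)\d x$. By scaling and translation we may reduce to $x_o = 0$, $t_o = 0$, $R=1$, so the target is $\sup_{(0,1)} \fint_{B_1} u(\cdot,x)\d x \le c\, \inf_{(2,8)\times B_1} u$. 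Since the right-hand side is already controlled by the classical weak Harnack above (at scale $R=1$), it suffices to prove a time-insensitive \emph{a priori} bound
\[
\sup_{t \in (0,1)} \fint_{B_1} u(t,x)\d x \le c \left( \int_0^2 \fint_{B_2} u^{q}(t,x) \d x \d t \right)^{1/q}
\]
for supersolutions, with some $q>1$ (equivalently $q<\,$something — the point is an $L^q$-to-$L^\infty$-in-time-average gain), and then feed this into the weak Harnack chain, iterating over a finite number of dyadic time-space scales to reach the small exponent $q_0$ appearing there.

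The heart of the matter is the \textbf{novel energy decay estimate for supersolutions} already advertised in the strategy subsection, namely
\[
\int_0^{\tau} \int_{B_1}\int_{B_1} |u(t,x)-u(t,y)|\,K(t;x,y)\,|x-y| \d y \d x \d t \le c\,\tau^{1-1/q}\left( \int_0^{2\tau}\int_{B_2} u^q \d x \d t\right)^{1/q}.
\]
I would prove this by testing the supersolution inequality against a cutoff-type test function of the form $\varphi(t,x) = \eta(x)^2 / (u(t,x)+\delta)^{\theta}$ or a truncated linear test function $\operatorname{sgn}(u(t,x)-u(t,y))$-type object localized by a Lipschitz cutoff, exploiting that the commutator/cutoff error in the nonlocal energy produces precisely the weight $|x-y|$ after one application of the fundamental estimate $|\eta(x)-\eta(y)| \le \|\nabla\eta\|_\infty |x-y|$; the $L^q$ term on the right absorbs the resulting lower-order contributions via Young's inequality, and the factor $\tau^{1-1/q}$ tracks the time integration. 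With this energy decay in hand, I would control the time-oscillation of the spatial average: writing $\frac{d}{dt}\fint_{B_1} u \,\d x = \fint_{B_1} \partial_t u \,\d x \ge \fint_{B_1}\mathcal{L}_t u\,\d x$ in the weak sense, the right-hand side is (up to the upper bound on $K$) bounded below by $-c$ times a nonlocal flux across $\partial B_1$, which after one more cutoff is dominated by exactly the left-hand side of the energy decay estimate plus a tail term. Integrating in time then yields that $t \mapsto \fint_{B_1} u(t,x)\d x$ cannot drop by more than $c\,(\int_0^2 \fint_{B_2} u^q)^{1/q}$ over $(0,1)$, and combined with the fact that its time-average is itself $\le c(\int_0^2 \fint_{B_2} u^q)^{1/q}$, we get the claimed sup-in-time bound on the average.

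The \textbf{main obstacle} I anticipate is the rigorous justification of the energy decay estimate with the linear weight $|x-y|$ inside the kernel: the integrand $|u(t,x)-u(t,y)|K(t;x,y)|x-y| \sim |u(t,x)-u(t,y)|\,|x-y|^{-d-2s+1}$ is \emph{not} a priori integrable (the exponent $-d-2s+1$ is still below $-d$ when $s \ge 1/2$), so one cannot naively plug in a test function; the estimate must be obtained as a genuine gain coming from the supersolution property via a Caccioppoli-type argument with a carefully chosen (possibly truncated or $\delta$-regularized) test function, passing to the limit at the end. A secondary subtlety is the interplay with the tail term $\operatorname{Tail}(u_-; x_o, r)$ that appears whenever one localizes a nonlocal equation — here it is harmless because $u\ge0$ makes $u_- = 0$, but one must still handle the positive-part long-range contribution, which is controlled by the weighted $L^1$ quantity and ultimately by $\inf_{(2,8)\times B_1} u$ again via the classical weak Harnack, closing the loop. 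Once these local estimates are in place, the passage from the $L^q$-average bound to the stated $\inf$-bound is a routine finite iteration using \eqref{eq:local-Harnack} (or its weak-Harnack half) on a chain of cylinders, exactly as in the De Giorgi–Moser framework, and presents no essential difficulty.
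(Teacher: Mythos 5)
You correctly identify the two main intermediate ingredients of the proof: the energy decay estimate $\int_0^\tau\int_{B_1}\int_{B_1}|u(t,x)-u(t,y)|\,K(t;x,y)\,|x-y|\,\d y\,\d x\,\d t\le c\,\tau^{1-1/q}\big(\int_0^{2\tau}\int_{B_2}u^q\big)^{1/q}$ (the paper's Proposition~4.2) and the propagation-in-time of the spatial $L^1$-average obtained by testing the equation with a pure spatial cutoff (the paper's Lemma~7.2). Your proof sketch for the energy decay, testing with $\eta^2(u+\delta)^{-\theta}$ and then applying Cauchy--Schwarz to split off the $|x-y|$ weight, is essentially what the paper does (Lemmas~4.3 and Proposition~4.2), and your remark that $|x-y|^{-d-2s+1}$ is not locally integrable for $s\ge 1/2$ is precisely the reason the supersolution property has to be exploited rather than a naive test.

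The genuine gap is in the closing step. You propose to bound $\big(\int_0^2\fint_{B_2}u^q\big)^{1/q}$ by $\inf_{(2,8)\times B_1}u$ via "a weak Harnack chain over finitely many dyadic scales." But the weak Harnack inequality from \cite{KaWe24c} (recalled in the paper's Proposition~3.8) requires the supersolution property on a time interval \emph{preceding} the interval of the $L^1$ integral --- there is a waiting time at the \emph{beginning} of the cylinder as well as between the integral and the infimum. In your target quantity, the $L^q$ integral starts at $t_o$, which is where the supersolution property itself begins. Iterating the weak Harnack over cylinders $(\varepsilon, 8)$ to reach the times near $t_o$ forces the constant to degenerate as $\varepsilon\searrow 0$ (this is exactly the waiting-time obstruction, and is the reason the paper explicitly notes that Theorem~1.5 "does not follow from the parabolic Harnack inequality \eqref{eq:local-Harnack}"). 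So without a new idea your chain is essentially circular: you need the improved weak Harnack to justify the weak Harnack step near $t_o$.

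The paper closes the loop differently and without invoking the weak Harnack at all. After normalizing $\int_{B_R}u(t_o,\cdot)=1$, they run a De~Giorgi--type dichotomy in terms of the parameter $p>1$ from the expansion of positivity. Either at some time $t_1\in[0,1]$ and some level $k>1$ the superlevel set $\{u(t_1,\cdot)>k\}\cap B_2$ has measure at least $k^{-1/(p+1)}|B_2|$, in which case the expansion of positivity with $\alpha=k^{-1/(p+1)}$ immediately gives $u\ge\eta\alpha^p k=\eta k^{1/(p+1)}>\eta$ at later times; or the complementary measure estimate holds uniformly, in which case the layer-cake formula yields an a~priori $L^\sigma$ bound with $\sigma=\frac{1}{2p}<\frac{1}{p+1}$, and then the reverse H\"older (Proposition~4.1), your propagation-of-$L^1$-positivity step, and the expansion of positivity combine into a pointwise lower bound (Lemma~7.3). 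Neither branch needs any information from times before $t_o$, which is what eliminates the waiting time. The moral is that the $L^q$ a~priori bound you assume is exactly the quantity you cannot get for free; the dichotomy generates it from the normalization alone.
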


Note that even for solutions, the improved weak Harnack inequality in \autoref{thm:improved-weak-Harnack} does not follow from the parabolic Harnack inequality \eqref{eq:local-Harnack}, since the solution property is not required for $t < t_o$.

\autoref{thm:improved-weak-Harnack} seems to be new in the context of nonlocal equations and has not even been established for the fractional Laplacian.
Results of similar type have been established in the context of local nonlinear equations, such as the porous medium equation (see \cite{DGV12}), and are important in the theory of initial traces.

\subsection{Further literature}

In the following, we give an incomplete overview of the literature on Harnack inequalities for nonlocal equations. First, let us mention \cite{Kas09}, \cite{DKP14}, \cite{DKP16}, \cite{Coz17}, \cite{CKW23} where the De Giorgi--Nash--Moser theory for nonlocal elliptic equations with bounded and measurable coefficients was developed, and H\"older regularity and Harnack inequalities were established. 

An analogous theory for nonlocal parabolic equations \eqref{Eq:1:1} was developed in \cite{CCV11}, \cite{FeKa13}, where H\"older regularity estimates and weak Harnack inequalities were derived (see also \cite{Str19,KaWe24a,KaWe22}). Despite these developments, a parabolic Harnack inequality was established only recently in \cite{KaWe24c}. In fact, establishing the two inequalities in \eqref{eq:local-Harnack} requires a fine understanding of the associated nonlocal tail terms, i.e. the global quantities in \eqref{eq:local-Harnack}. In contrast to elliptic equations, these tail terms are scaling critical for parabolic problems in the sense that they cannot be treated by standard perturbation arguments. Therefore it is much more delicate to incorporate them into suitable estimates. On a related note, it was observed in \cite{KaWe24c,Lia24c} that weak solutions to \eqref{eq:local-sol} are in general only continuous, but not H\"older continuous, unless additional assumptions on the tail terms are imposed. The situation is even worse for nonlocal kinetic equations, where a Harnack inequality is in general false, even for the fractional Laplacian (see \cite{KaWe24b}).

There also exist several contributions in the literature dealing with nonlinear nonlocal parabolic equations modeled upon the fractional $p$-Laplacian (see \cite{APT22,ByKi24,Lia24,Lia24b,Lia24c}). However, the validity of a Harnack inequality remains an interesting open problem in the field. 

Moreover, we refer to \cite{ChKu03,CKW20} for approaches to H\"older regularity estimates and Harnack inequalities for solutions to \eqref{eq:local-sol} that are based on probabilistic arguments. Let us mention \cite{CaSi09,ChDa16} for related results for nonlocal equations in non-divergence form.

For Li-Yau-type inequalities pertinent to the fractional heat equation, we refer to \cite{DeSi23} and \cite{WeZa23}, and to the discussions in \cite{BSV17}, \cite{DKSZ20}, and \cite{KaWe24c}.

Finally, let us mention that there are several classes of \textit{nonlinear} parabolic equations, such as the fast diffusion equation or equations modeled upon the parabolic $p$-Laplacian with $p < 2$, which satisfy a Harnack inequality without waiting-time (see \cite{DiB93,BoVa10,DGV12}). However, the underlying mechanism for these equations appears to be different from the one we describe in this article, as no globality enters, whereas the extinction of solutions could occur abruptly.

\subsection{Outline}

This article is structured as follows. In Sections \ref{sec:notation} and \ref{sec:tools} we introduce the weak solution concept, and collect and prove several auxiliary results that will be used in the sequel of the article. In Sections \ref{sec:local} and \ref{sec:Psi}, we prove several preparatory results. In Section \ref{sec:time-insensitive} we show the time-insensitive $L^1(\R^d;\mu)$ estimate, establish our main results \autoref{Thm:0} and \autoref{Thm:1}, and demonstrate that the waiting-time cannot be avoided for local solutions. Section \ref{sec:improved-weak-Harnack} is dedicated to establishing the improved weak Harnack inequality (see \autoref{thm:improved-weak-Harnack}). Finally, in Appendix \ref{sec:appendix}, we prove a representation formula for solutions to the nonlocal Cauchy problem in terms of the fundamental solution.

\subsection*{Acknowledgments}

Naian Liao was supported by the FWF-project P36272-N ``On the Stefan type problems".
Marvin Weidner has received funding from the European Research Council (ERC) under the Grant Agreement No 801867 and the Grant Agreement No 101123223 (SSNSD), and by the AEI project PID2021-125021NA-I00 (Spain).

\section{Notation and notion of weak solutions}
\label{sec:notation}

In this manuscript, we use the notation $a\wedge b\equiv \min\{a,b\}$ and $a\vee b\equiv \max\{a,b\}$. The ball with radius $r$ and center $x_o$ is denoted as $B_r(x_o)$; when the context is clear, we omit $x_o$. The fractional Sobolev space $H^s(\Om)$ is standard; see~\cite{KaWe24c}. Moreover, we consider a Radon measure $\mu$ defined by
\begin{equation*}
    \mu(E):=\int_{E}\frac{\d x}{(1+|x|)^{d+2s}},
\end{equation*}
where $E$ is a Lebesgue measurable set and define the corresponding weighted Lebesgue space by
\begin{align*}
    L^1(\R^d;\mu) := \bigg\{ u \in L^1_{\loc}(\R^d) : \Vert u \Vert_{L^1(\R^d;\mu)} \equiv \int_{\R^d} \frac{|u(x)|}{(1+|x|)^{d+2s}}\d x < \infty \bigg\}.
\end{align*}

We introduce a bilinear form that is pertinent to the kernel $K$:
\begin{align*}
\cE^{(t)}(u,v) := \int_{\R^d} \int_{\R^d} (u(x) - u(y))(v(x)-v(y))K(t;x,y) \d y \d x.
\end{align*}

We give three definitions of solutions, a local one, a global one and one for Cauchy problems.

\begin{definition}[local solution]\label{Def:local-sol}
Let $I \subset \R$ be an interval and $\Omega \subset \R^d$ be a bounded domain.
We say that 
$$
u \in L^{\infty}(I;L^2_{\loc}(\Omega)) \cap L^2(I;H^s_{\loc}(\Omega)) \cap L^1(I;L^1(\R^d;\mu))
$$ is a weak supersolution to
\begin{align}
\label{eq:PDE-domains}
    \partial_t u - \mathcal{L}_t u = 0 ~~ \text{ in } I \times \Omega,
\end{align}
if for any $\phi \in H^1(I;L^2(\Om)) \cap L^2(I;H^s(\R^d))$ with $\phi \ge 0$ and $\supp(\phi) \Subset I \times \Omega$ it holds
\begin{align*}
        -\int_I \int_{\R^d} u(t,x) \partial_t \phi(t,x) \d x \d t + \int_I \cE^{(t)}(u(t),\phi(t)) \d t \ge 0.
    \end{align*}
    We say that $u$ is a weak subsolution to \eqref{eq:PDE-domains} if $-u$ is a weak supersolution, whereas $u$ is a weak solution to \eqref{eq:PDE-domains} if it is a weak supersolution and a weak subsolution.
\end{definition}

\begin{definition}[global solution]\label{Def:global-sol}
    Let $I\subset \R$ be an interval. We say that
    $$
    u \in L^{\infty}(I;L^2_{\loc}(\R^d)) \cap L^2(I;H^s_{\loc}(\R^d)) \cap L^1(I;L^1(\R^d;\mu))
    $$ is a global weak solution to 
    \begin{align*}
    \partial_t u - \mathcal{L}_t u = 0 ~~ \text{ in } I \times \R^d,
\end{align*}
if it is a weak solution in $I\times B_R$ for any $R>0$ in the sense of \autoref{Def:local-sol}.
\end{definition}

\begin{remark}
    It should be stressed that our definition of global solutions does not require $u(t,\cdot)$ to belong to $ H^s(\R^d)$ or $L^2(\R^d)$.
\end{remark}

\begin{remark}
    Note that time derivatives of solutions are not assumed to exist. However, by a standard time mollification argument (see \cite{FeKa13}, \cite{KaWe24a}, \cite{Lia24b}) we can work with a slightly stronger supersolution concept. That is, for a.e. $t \in I$:
    \begin{align*}
        \int_{\R^d} \partial_t u(t,x) \varphi(x) \d x + \cE^{(t)}(u(t),\varphi) \ge 0
    \end{align*}
    for any $\varphi \in H^s(\R^d)$ with $\supp(\varphi) \subset \Omega$ and $\varphi \ge 0$. Here, $\partial_t u$ denotes the $L^2$-derivative of $u$. The same remark holds for subsolutions.
\end{remark}

Solutions to the nonlocal Cauchy problem are defined as follows.  

\begin{definition}[Cauchy problem]
\label{def:Cauchy}
    Let $T > 0$ and $f \in L^2(\R^d)$. We say that $$
    u\in L^{\infty}((0,T);L^2(\R^d))\cap L^2((0, T); H^s(\R^d))
    $$
    is a weak solution to the Cauchy problem
    \begin{align*}
        \begin{cases}
            \partial_t u - \mathcal{L}_t u &= 0 ~~ \text{ in } (0,T) \times \R^d,\\
            u(0) &= f ~~ \text{ in } \R^d,
        \end{cases}
    \end{align*}
    if $u$ is a global solution in the sense of \autoref{Def:global-sol}, and $u(t) \to f$ in $L^2_{\loc}(\R^d)$ as $t \searrow 0$.
\end{definition}

\begin{remark}\label{Rmk:dual-prob}
The concept of dual Cauchy problem is analogous:
    \begin{align*}
        \begin{cases}
            \partial_t v + \mathcal{L}_t v &= 0 ~~ \text{ in } (0,T) \times \R^d,\\
            v(T) &= f ~~ \text{ in } \R^d.
        \end{cases}
    \end{align*}
    Let $u$ be a solution to the Cauchy problem for $\mathcal{L}_{T-t}$ in $(0,T) \times \R^d$ with $u(0) = f$ in the sense of \autoref{def:Cauchy}. Then, it holds that $v(t,x)= u(T-t,x)$ is a solution to the dual problem. Therefore, all interior regularity results for $u$ also hold true for $v$.
\end{remark}

\section{Collecting tools}
\label{sec:tools}
In this section, we collect some tools most of which are known to experts. They include some algebraic estimates, an integral estimate, boundedness estimates, weak Harnack estimates and continuity estimates of solutions.

\subsection{Algebraic estimates}

The goal of this section is to prove the following lemma.

\begin{lemma}
\label{lemma:algebraic-est}
Let $u(\cdot)> 0$ and $\eta(\cdot) \ge 0$, whereas $\eps \in (0,1)$. There exist $c_1,c_2 > 0$, depending only on $\eps$, such that for any $x,y \in \R^d$:
\begin{align*}
(u(y) - u(x))(\eta^2 u^{-\eps}(x) - \eta^2 u^{-\eps}(y)) &\ge c_1 (\eta u(x) - \eta u(y))^2 (u^{-\eps-1}(x) \wedge u^{-\eps-1}(y)) \\
&\quad - c_2 (u^{-\eps+1}(x) \vee u^{-\eps+1}(y))(\eta(x) - \eta(y))^2.
\end{align*} 
\end{lemma}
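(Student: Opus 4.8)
The plan is to reduce the two-point inequality to a one-variable statement by normalizing, exploiting the symmetry of both sides under swapping $x$ and $y$, and then splitting into cases according to which of $u(x),u(y)$ is larger. Assume without loss of generality that $u(x) \le u(y)$, and set $\theta := u(x)/u(y) \in (0,1]$, $a := \eta(x)$, $b := \eta(y)$. Writing $u(x) = \theta u(y)$ and pulling out the common factor $u^{-\eps+1}(y)$, the claimed inequality becomes (after dividing by $u^{-\eps+1}(y) > 0$) a relation among $\theta$, $a$, $b$ only, since $u^{-\eps-1}(x)\wedge u^{-\eps-1}(y) = u^{-\eps-1}(x) = \theta^{-\eps-1} u^{-\eps-1}(y)$ and $u^{-\eps+1}(x)\vee u^{-\eps+1}(y) = u^{-\eps+1}(y)$. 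Concretely, one is left to prove
\begin{align*}
    (1 - \theta)(a^2 \theta^{-\eps} - b^2) \ge c_1 (a\theta - b)^2 \theta^{-\eps-1} - c_2 (a-b)^2
\end{align*}
for all $\theta \in (0,1]$ and $a,b \ge 0$. The left-hand side may be negative (e.g. if $b \gg a$), so the $-c_2(a-b)^2$ term on the right is genuinely needed as a slack.

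Next I would expand $(a\theta - b)^2 = (a\theta)^2 - 2ab\theta + b^2$ and $(a-b)^2 = a^2 - 2ab + b^2$, and organize both sides as quadratic forms in $(a,b)$. The key algebraic observation is the identity $a^2 \theta^{-\eps} - b^2 - \text{(stuff)}$ can be compared to $(a\theta - b)^2\theta^{-\eps-1}$: indeed $\theta^{-\eps-1}(a\theta-b)^2 = a^2\theta^{1-\eps} - 2ab\theta^{-\eps} + b^2\theta^{-\eps-1}$, while $(1-\theta)a^2\theta^{-\eps} = a^2\theta^{-\eps} - a^2\theta^{1-\eps}$. So the $a^2$-coefficients and cross-terms are designed to telescope. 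I would pick $c_1$ small (say $c_1 = 1/4$ or determined at the end) and absorb the leftover negative contributions — which are all controlled by $b^2$ and $ab$ times powers of $\theta$ bounded for $\theta\in(0,1]$ — into the term $c_2(a-b)^2$, using Young's inequality $2ab \le \delta a^2 + \delta^{-1}b^2$ to handle the cross terms and noting $(a-b)^2 \ge $ a positive multiple of $b^2$ once $a$ is, say, bounded away from $b$; the regime $a$ close to $b$ has to be treated by a direct estimate since then $(a-b)^2$ is small. A cleaner route: substitute $a = b + (a-b)$ everywhere, so that after expansion every term is either a multiple of $b^2$ times a function of $\theta$, or contains a factor $(a-b)$; the $b^2$-terms must satisfy the scalar inequality $(1-\theta)(\theta^{-\eps} - 1) \ge c_1 (\theta - 1)^2 \theta^{-\eps-1}$ with no help from $c_2$, i.e. $(\theta^{-\eps}-1)/(1-\theta) \ge c_1 (1-\theta)\theta^{-\eps-1}$, and the remaining terms carry a factor $(a-b)$ or $(a-b)^2$ and get absorbed into $c_2(a-b)^2$ after Young. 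The scalar $b^2$-inequality is elementary: both sides vanish at $\theta = 1$, and a Taylor expansion / monotonicity check gives it for a suitable $c_1(\eps)$ (note $\theta^{-\eps}-1 \ge \eps(1-\theta)$ and $\theta^{-\eps-1} \le $ is unbounded as $\theta\to 0$, but $(1-\theta)\theta^{-\eps-1}$ versus $(\theta^{-\eps}-1)/(1-\theta) \sim \theta^{-\eps}$ for small $\theta$ — here one needs $\theta^{-\eps} \gtrsim \theta^{-\eps-1}\cdot\theta^2 = \theta^{1-\eps}$, true — so the estimate is comfortable for small $\theta$ and the delicate regime is again $\theta\to 1$).

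The main obstacle I anticipate is bookkeeping rather than depth: tracking the dependence of $c_1, c_2$ on $\eps$ through the case analysis, and in particular making sure the choice of the Young parameter $\delta$ (used to split cross terms $ab\,\theta^{k}$) is made so that the negative $a^2\theta^{k}$ debris it creates is still dominated by the surplus in the $b^2$-scalar inequality uniformly in $\theta \in (0,1]$. A slightly slicker alternative that avoids some of this is to first prove the inequality with $\eta$ replaced by constants $a,b$ and then note that the full statement is exactly this pointwise scalar fact applied with $x,y$ fixed; one could also try to cite or mimic the corresponding algebraic lemma from the local/nonlocal De Giorgi–Nash–Moser literature (e.g. the inequality behind Moser's logarithmic estimate, which has precisely this shape). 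I would present the proof via the normalization $\theta = u(x)/u(y)$, reduce to the two scalar inequalities above (the "$b^2$-part" with constant $c_1$ and the "$(a-b)$-part" absorbed by $c_2$), verify each by elementary calculus on $(0,1]$, and collect the constants at the end.
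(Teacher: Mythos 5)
Your normalization idea is genuinely different from the paper's argument: the paper first splits the left-hand side by applying \autoref{lemma:algebra-Moser-testing} with $g(t)=t^{-\eps}$, then controls the two resulting pieces via the mean-value estimate \eqref{eq:algebraic-est-1}, the inequalities \eqref{eq:genaux4}--\eqref{eq:genaux5}, and the identities \eqref{eq:algebraic-est-2}--\eqref{eq:algebraic-est-3}. Your route, normalizing so that only $\theta=u(x)/u(y)$ and $a=\eta(x),\,b=\eta(y)$ remain, could give a more elementary proof. However, as executed it contains a monotonicity error that makes your reduced scalar inequality false. Since $-\eps-1<0$, the map $t\mapsto t^{-\eps-1}$ is decreasing, so under the normalization $u(x)\le u(y)$ the minimum is attained at the \emph{larger} argument,
\[
u^{-\eps-1}(x)\wedge u^{-\eps-1}(y)=u^{-\eps-1}(y),
\]
not $u^{-\eps-1}(x)=\theta^{-\eps-1}u^{-\eps-1}(y)$ as you write. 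Dividing by the common factor $u^{1-\eps}(y)$, the correct scalar inequality is
\[
(1-\theta)(a^2\theta^{-\eps}-b^2)\ \ge\ c_1(a\theta-b)^2-c_2(a-b)^2,
\]
with no $\theta^{-\eps-1}$ factor. Your version with the extra $\theta^{-\eps-1}$ is genuinely false: take $a=b=1$ (so the $c_2$-term vanishes) and let $\theta\searrow 0$; the left side grows like $\theta^{-\eps}$ while your right side grows like $c_1\theta^{-\eps-1}$, so no choice of $c_1>0$ works. Your parenthetical attempt to rule out this regime contains the slip that produced the confusion: for small $\theta$ the factor $(1-\theta)^2$ is of order $1$, not $\theta^2$, so the comparison you actually needed is $\theta^{-\eps}\gtrsim\theta^{-\eps-1}$, which fails.

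Once the minimum is identified correctly, the remaining program you sketch (the $b^2$-part $\theta^{-\eps}-1\ge c_1(1-\theta)$, the $(a-b)$-terms absorbed into $c_2(a-b)^2$ via Young) should close the proof, but one more subtlety is worth noting: the endpoint choice $c_1=\eps$ in the $b^2$-part is too aggressive. Expanding near $\theta=1$ with $a=b-k$ shows the quadratic form one must control is $(c_2-c_1)k^2-2(1+c_1)k(1-\theta)+(\eps-c_1)(1-\theta)^2$, whose nonnegativity requires $(1+c_1)^2\le(c_2-c_1)(\eps-c_1)$; this forces $c_1<\eps$ strictly, after which $c_2$ can be chosen large. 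So take $c_1$ a genuine fraction of $\eps$ (your ``pick $c_1$ small'' instinct is right), and record both the corrected $\wedge/\vee$ identification and the strict inequality $c_1<\eps$ when you write this up.
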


In order to prove \autoref{lemma:algebraic-est}, we first recall the following lemma (see \cite[Lemma 3.2]{KaWe22}).

\begin{lemma}
For any $\eta_1,\eta_2 \ge 0$ and $t,s > 0$:
	\begin{align}
		\label{eq:genaux4}
		(\eta_1^2 \wedge \eta_2^2)  |t - s|^2 &\ge \frac{1}{2} |\eta_1 t - \eta_2 s|^2  - (\eta_1 - \eta_2)^2 ( t^2 \vee s^2 ),\\
		\label{eq:genaux5}
		(\eta_1^2 \vee \eta_2^2) |t-s|^2 &\le 2 |\eta_1 t - \eta_2 s|^2 +2 (\eta_1 - \eta_2)^2 (t^2 \vee s^2 ).
	\end{align}
\end{lemma}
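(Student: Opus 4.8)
The plan is to prove both inequalities by the same elementary device, so that positivity of $t,s$ plays no role and only $\eta_1,\eta_2\ge 0$ enters through the squares. The idea is to decompose the mixed difference $\eta_1 t-\eta_2 s$ in the two symmetric ways $\eta_1 t-\eta_2 s = \eta_1(t-s)+(\eta_1-\eta_2)s = \eta_2(t-s)+(\eta_1-\eta_2)t$, and then apply the crude bound $|a+b|^2\le 2a^2+2b^2$ to each representation.

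For \eqref{eq:genaux4}, applying $|a+b|^2\le 2a^2+2b^2$ to the two decompositions gives $|\eta_1 t-\eta_2 s|^2\le 2\eta_1^2|t-s|^2+2(\eta_1-\eta_2)^2 s^2$ and $|\eta_1 t-\eta_2 s|^2\le 2\eta_2^2|t-s|^2+2(\eta_1-\eta_2)^2 t^2$. Keeping the first bound when $\eta_1\le\eta_2$ and the second otherwise, in either case the right-hand side is dominated by $2(\eta_1^2\wedge\eta_2^2)|t-s|^2+2(\eta_1-\eta_2)^2(t^2\vee s^2)$; dividing by $2$ and rearranging yields \eqref{eq:genaux4}. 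For \eqref{eq:genaux5} I would instead write $\eta_1(t-s)=(\eta_1 t-\eta_2 s)-(\eta_1-\eta_2)s$ and $\eta_2(t-s)=(\eta_1 t-\eta_2 s)-(\eta_1-\eta_2)t$; the same inequality $|a+b|^2\le 2a^2+2b^2$ produces $\eta_1^2|t-s|^2\le 2|\eta_1 t-\eta_2 s|^2+2(\eta_1-\eta_2)^2 s^2$ and $\eta_2^2|t-s|^2\le 2|\eta_1 t-\eta_2 s|^2+2(\eta_1-\eta_2)^2 t^2$. Both right-hand sides are at most $2|\eta_1 t-\eta_2 s|^2+2(\eta_1-\eta_2)^2(t^2\vee s^2)$, and taking the larger of the two left-hand sides gives \eqref{eq:genaux5}.

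\textbf{Where the (minimal) care is needed.} There is essentially no serious obstacle here: the only points requiring a moment's attention are that one must pick the ``right'' of the two decompositions according to which of $\eta_1,\eta_2$ is smaller, so that $\eta_1^2\wedge\eta_2^2$ (resp.\ $\eta_1^2\vee\eta_2^2$) appears with the correct sign, and that after this choice it is the symmetric quantity $t^2\vee s^2$ — rather than $s^2$ or $t^2$ alone — that survives. One could alternatively reduce to the case $\eta_1\le\eta_2$ by the symmetry $(\eta_1,t)\leftrightarrow(\eta_2,s)$, under which every term in \eqref{eq:genaux4}--\eqref{eq:genaux5} is invariant, and then expand the squares and compare coefficients directly; but the two-decomposition argument above is shorter and avoids any case distinction. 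The constants $\tfrac12$, $1$ and $2$, $2$ asserted in the statement come out exactly in this way, with no slack left to optimize. (As noted in the text, this is \cite[Lemma~3.2]{KaWe22}, so one may alternatively just cite it.)
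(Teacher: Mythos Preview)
Your proof is correct. The paper does not actually prove this lemma; it merely cites \cite[Lemma~3.2]{KaWe22} and states the result. Your argument via the two symmetric decompositions of $\eta_1 t-\eta_2 s$ together with $|a+b|^2\le 2a^2+2b^2$ is the standard elementary route and recovers the exact constants, so your write-up supplies a self-contained proof where the paper offers only a reference.
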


Moreover,  the next lemma will be applied with $g(t) = t^{-\eps}$.

\begin{lemma}
\label{lemma:algebra-Moser-testing}
Let $g: (0,\infty) \to (0,\infty)$ be decreasing. For any $\eta_1,\eta_2 \ge 0$ and $t,s > 0$, we have
\begin{align*}
(t-s)(\eta_1^2 g(s) - \eta_2^2 g(t)) &\ge (t-s)(g(s) - g(t)) ( \eta_1^2 \wedge \eta_2^2 ) \\
&\quad + (t-s) (g(s) \wedge g(t) )(\eta_1^2 - \eta_2^2).
\end{align*}
\end{lemma}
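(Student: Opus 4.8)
The plan is to prove the pointwise inequality by a direct algebraic manipulation, reducing it to the single observation that the monotonicity of $g$ forces two quantities to have the same sign. First I would dispose of the trivial case $t=s$, where both sides vanish, so that from now on $t\neq s$. Write the desired inequality as $D:=(t-s)\,E\ge 0$, where
\[
E:=\eta_1^2 g(s) - \eta_2^2 g(t) - (g(s)-g(t))\,(\eta_1^2\wedge\eta_2^2) - (g(s)\wedge g(t))\,(\eta_1^2-\eta_2^2),
\]
i.e.\ I factor out the common $(t-s)$ from all three terms of the statement.

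\textbf{Simplification of $E$.} Using the elementary identities $\eta_1^2=(\eta_1^2\wedge\eta_2^2)+(\eta_1^2-\eta_2^2)_+$ and $\eta_2^2=(\eta_1^2\wedge\eta_2^2)+(\eta_2^2-\eta_1^2)_+$ (where $(\cdot)_+$ denotes the positive part), the first two terms of $E$ become $(\eta_1^2\wedge\eta_2^2)(g(s)-g(t)) + (\eta_1^2-\eta_2^2)_+\,g(s) - (\eta_2^2-\eta_1^2)_+\,g(t)$, so the term $(g(s)-g(t))(\eta_1^2\wedge\eta_2^2)$ cancels. Then, writing $\eta_1^2-\eta_2^2=(\eta_1^2-\eta_2^2)_+-(\eta_2^2-\eta_1^2)_+$ and using $a - a\wedge b = (a-b)_+$ twice, one arrives at
\[
E = (\eta_1^2-\eta_2^2)_+\,(g(s)-g(t))_+ \;-\; (\eta_2^2-\eta_1^2)_+\,(g(t)-g(s))_+ .
\]
This is the heart of the matter, and it is a purely mechanical computation.

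\textbf{Conclusion via monotonicity.} It remains to check $(t-s)E\ge0$ in the two cases. If $t>s$, then since $g$ is decreasing we have $g(s)\ge g(t)$, hence $(g(t)-g(s))_+=0$ and the second term of $E$ drops; thus $(t-s)E=(t-s)\,(\eta_1^2-\eta_2^2)_+\,(g(s)-g(t))\ge 0$, all three factors being nonnegative. If $t<s$, then $g(s)\le g(t)$, so $(g(s)-g(t))_+=0$ and $(t-s)E=-(t-s)\,(\eta_2^2-\eta_1^2)_+\,(g(t)-g(s))\ge 0$, since $t-s<0$ and the other two factors are nonnegative. This proves $D\ge0$, which is exactly the claimed inequality.

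\textbf{Expected obstacle.} There is essentially no serious obstacle here; the only point requiring care is bookkeeping with the positive parts in the simplification of $E$, and keeping track of which of $(g(s)-g(t))_+$, $(g(t)-g(s))_+$ vanishes in each case. The lemma will then be applied with $g(t)=t^{-\eps}$, which is indeed decreasing on $(0,\infty)$, to deduce \autoref{lemma:algebraic-est} by combining it with \eqref{eq:genaux4}.
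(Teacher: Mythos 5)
Your proof is correct, and it is essentially the same argument as in the paper: both split into the same two cases (determined by the sign of $g(s)-g(t)$, which since $g$ is decreasing matches the sign of $t-s$) and reduce the inequality to a product of nonnegative factors. The only cosmetic difference is that you first repackage the discrepancy as a clean positive-part expression $(\eta_1^2-\eta_2^2)_+(g(s)-g(t))_+ - (\eta_2^2-\eta_1^2)_+(g(t)-g(s))_+$ and then case on $t\gtrless s$, whereas the paper directly chooses, in each of the two cases, the more convenient of two equivalent algebraic decompositions of $\eta_1^2 g(s)-\eta_2^2 g(t)$.
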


\begin{proof}
To begin with, we observe the following algebraic identities:
\begin{align*}
(\eta_1^2 g(s) - \eta_2^2 g(t)) &= (g(s) - g(t))\eta_1^2 + g(t)(\eta_1^2 - \eta_2^2) \\
&= (g(s) - g(t))\eta_2^2 + g(s)(\eta_1^2 - \eta_2^2).
\end{align*}
Since $g$ is decreasing, it must hold that $(t - s)(g(s) - g(t)) \ge 0$. Thus, we can estimate
\begin{align*}
(t - s)& (\eta_1^2 g(s) - \eta_2^2 g(t)) \\
&= 
\begin{cases}
(t - s) (g(s) - g(t)) \eta_1^2 + (t - s) g(t)(\eta_1^2 - \eta_2^2), ~~ \text{ if } g(t) \le g(s),\\
(t - s) (g(s) - g(t)) \eta_2^2 + (t - s) g(s)(\eta_1^2 - \eta_2^2), ~~ \text{ if } g(s) \le g(t),\\
\end{cases}\\
&\ge (t - s) (g(s) - g(t)) ( \eta_1^2 \wedge \eta_2^2 ) + (t-s) ( g(s) \wedge g(t) ) (\eta_1^2 - \eta_2^2).
\end{align*}
This is the desired inequality. 
\end{proof}

Let us now prove \autoref{lemma:algebraic-est}:

\begin{proof}[Proof of \autoref{lemma:algebraic-est}]
The proof relies on the following auxiliary observations.

First, it holds by the mean value theorem for any distinct $t,s > 0$ that
\begin{align}
\label{eq:algebraic-est-1}
(t-s)(s^{-\eps} - t^{-\eps}) = - (t-s)^2 \frac{s^{-\eps} - t^{-\eps}}{s-t} \ge \eps (t-s)^2 (s^{-\eps-1} \wedge t^{-\eps-1}).
\end{align}
Second, for any $t,s > 0$ we have
\begin{align}
\label{eq:algebraic-est-2}
t^{-\eps} \wedge s^{-\eps} = (s^{\frac{-\eps-1}{2}} \wedge t^{\frac{-\eps-1}{2}}) (s^{\frac{-\eps+1}{2}} \vee t^{\frac{-\eps+1}{2}}).
\end{align}
This can be seen easily by checking the cases $t > s$ and $s > t$ separately. Similarly,
\begin{align}
\label{eq:algebraic-est-3}
(t^{-\eps-1} \wedge s^{-\eps-1})(t^2 \vee s^2) = (t^{-\eps+1} \vee s^{-\eps + 1}).
\end{align}

Now, by \autoref{lemma:algebra-Moser-testing}, we have
\begin{align*}
(u(y) - u(x))(\eta^2 u^{-\eps}(x) - \eta^2 u^{-\eps}(y)) &\ge (u(y) - u(x))(u^{-\eps}(x) - u^{-\eps}(y))(\eta^2(x) \wedge \eta^2(y)) \\
&\quad + (u(y) - u(x))(u^{-\eps}(x) \wedge u^{-\eps}(y))(\eta^2(x) - \eta^2(y)) \\
&= I_1 + I_2.
\end{align*}
For $I_1$, we apply \eqref{eq:algebraic-est-1}, \eqref{eq:genaux4}, and \eqref{eq:algebraic-est-3} to deduce
\begin{align*}
I_1 &\ge \eps (u(x) - u(y))^2 (u^{-\eps-1}(x) \wedge u^{-\eps-1}(y)) (\eta^2(x) \wedge \eta^2(y)) \\
&\ge \frac{\eps}{2} (\eta u(x) - \eta u(y))^2 (u^{-\eps-1}(x) \wedge u^{-\eps-1}(y)) - \eps(\eta(x) - \eta(y))^2 (u^{-\eps+1}(x) \vee u^{-\eps+1}(y)).
\end{align*}
For $I_2$, we apply \eqref{eq:algebraic-est-2}, \eqref{eq:genaux5}, and \eqref{eq:algebraic-est-3} to deduce that for any $\delta > 0$ there exists $c(\delta) > 0$ such that
\begin{align*}
I_2 &= (u(y) - u(x))(\eta(x) + \eta(y)) (u^{\frac{-\eps-1}{2}}(x) \wedge u^{\frac{-\eps-1}{2}}(y))  (\eta(x) - \eta(y))  (u^{\frac{-\eps+1}{2}}(x) \vee u^{\frac{-\eps+1}{2}}(y)) \\
&\ge -\delta (u(x) - u(y))^2 (\eta^2(x) \vee \eta^2(y)) (u^{-\eps-1}(x) \wedge u^{-\eps-1}(y)) \\
&\quad - c(\delta) (\eta(x) - \eta(y))^2 (u^{-\eps+1}(x) \vee u^{-\eps+1}(y)) \\
&\ge - 2\delta (\eta u(x) - \eta u(y))^2 (u^{-\eps-1}(x) \wedge u^{-\eps-1}(y)) \\
&\quad -(2\delta + c(\delta)) (\eta(x) - \eta(y))^2 (u^{-\eps+1}(x) \vee u^{-\eps+1}(y)).
\end{align*}
Thus, the desired result follows upon choosing $\delta = \frac{\eps}{4}$, and combining the estimates for $I_1, I_2$.
\end{proof}

\subsection{An integral estimate}
The following integral estimate will be useful.
\begin{lemma}\label{Lm:tail-weight-estimate}
    There exists $c>1$ depending on $d$ and $s$, such that
\begin{align*}
    \int_{  B^c_1} \frac1{(1 + |x-y|)^{d+2s}} \frac{ \d y}{|y|^{d+2s}} \le \frac{c}{(1 + |x|)^{d+2s}}
\end{align*}
holds for all $x \in \R^d$.
\end{lemma}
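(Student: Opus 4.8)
The strategy is a standard dyadic decomposition of the region $B_1^c$ according to the size of $|y|$, combined with the elementary observation that on each dyadic annulus one of the two competing scales—$1+|x-y|$ versus $1$, or $|x|$ versus $|y|$—dominates. I would first split into the two regimes $|x|\le 2$ and $|x|>2$.

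In the regime $|x|\le 2$ the right-hand side is comparable to a positive constant, so it suffices to bound the integral by an absolute constant. On $B_1^c$ we have $(1+|x-y|)^{-d-2s}\le 1$, hence the integral is at most $\int_{B_1^c}|y|^{-d-2s}\d y$, which converges and equals a dimensional constant; this settles the case $|x|\le 2$.

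In the regime $|x|>2$ I would decompose $B_1^c$ into the ``near'' set $N:=\{y\in B_1^c:\ |y-x|\le \tfrac12|x|\}$ and the ``far'' set $F:=B_1^c\setminus N$. On $N$ one has $|y|\ge|x|-|y-x|\ge\tfrac12|x|$, so the factor $|y|^{-d-2s}\le 2^{d+2s}|x|^{-d-2s}$ comes out of the integral, and what remains is $\int_{|y-x|\le |x|/2}(1+|x-y|)^{-d-2s}\d y\le \int_{\R^d}(1+|z|)^{-d-2s}\d z$, a finite dimensional constant; since $|x|>2$ implies $|x|^{-d-2s}\le C(1+|x|)^{-d-2s}$, the contribution of $N$ is acceptable. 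On $F$ one has $|x-y|\ge\tfrac12|x|$, hence $(1+|x-y|)^{-d-2s}\le 2^{d+2s}(1+|x|)^{-d-2s}$ (indeed $(1+|x-y|)\ge 1+\tfrac12|x|\ge \tfrac12(1+|x|)$), so again this factor comes out, leaving $\int_{F}|y|^{-d-2s}\d y\le \int_{B_1^c}|y|^{-d-2s}\d y$, a finite dimensional constant. Summing the two contributions yields the claimed bound $c\,(1+|x|)^{-d-2s}$ with $c=c(d,s)$.

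\textbf{Main obstacle.} There is no serious difficulty here; the only point requiring a little care is organizing the case split so that in each piece exactly one of the three factors $(1+|x-y|)^{-d-2s}$, $|y|^{-d-2s}$, $(1+|x|)^{-d-2s}$ is pulled out as a pointwise bound while the remaining integral is a convergent dimensional constant, and checking the comparisons $|x|^{-d-2s}\lesssim(1+|x|)^{-d-2s}$ and $1+|x-y|\gtrsim 1+|x|$ on the far set, which both use $|x|\ge 2$ (respectively $|x-y|\ge|x|/2$). One should also note at the outset that the integrand is integrable near $y\to\infty$ thanks to the decay $|y|^{-d-2s}$ with $s>0$, so all the integrals written above are finite.
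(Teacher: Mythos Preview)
Your proposal is correct and follows essentially the same approach as the paper: a trivial bound when $|x|\le 2$, and for $|x|>2$ a near/far decomposition around $x$ in which one pulls out $|y|^{-d-2s}$ on the near set and $(1+|x-y|)^{-d-2s}$ on the far set. The only cosmetic difference is the radius of the splitting ball (the paper uses $|x|/4$, you use $|x|/2$), which is immaterial.
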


\begin{proof}
Observe that for any $x \in \R^d$, we have
    \begin{align*}
           \int_{  B^c_1} \frac1{(1 + |x-y|)^{d+2s}} \frac{ \d y}{|y|^{d+2s}} \le\int_{  B^c_1}   \frac{ \d y}{|y|^{d+2s}} = c(d,s),
    \end{align*}
    whereas if $x \in  B^c_{2}$, we can refine the estimate. Indeed, observe that
    \begin{equation*}
    \left\{
        \begin{array}{cc}
            \displaystyle\frac1{(1+|x-y|)^{d+2s}} \le \frac{c(d)}{|x|^{d+2s}} &\text{if}\quad y \not \in B_{\frac{|x|}{4}}(x), \\ [5pt]
           \displaystyle \frac1{|y|^{d+2s}} \le \frac{c(d)}{|x|^{d+2s}} &\text{if}\quad y \in B_{\frac{|x|}{4}}(x).
        \end{array}\right.
    \end{equation*}
    As a result, we can estimate
    \begin{align*}
    \int_{  B_1^c} \frac1{(1 + |x-y|)^{d+2s}} \frac{\d y}{|y|^{d+2s}} 
        &  \le  \frac{c}{|x|^{d+2s}} \int_{ (B_1 \cup B_{\frac{|x|}{4}}(x))^c} \frac{\d y}{|y|^{d+2s}}  \\
        &\quad + \frac{c}{|x|^{d+2s} }\int_{B_{\frac{|x|}{4}}(x)} \frac{\d y}{(1 + |x-y|)^{d+2s}}  \\
        &\le \frac{c}{ |x|^{d+2s}} + \frac{c}{ |x|^{d+2s}} \int_{B_{\frac{|x|}{4}}} \frac{\d y}{(1 + |y|)^{d+2s}}  \\
        &\le \frac{c}{ |x|^{d+2s}}
    \end{align*}
for some $c=c(d,s)$.
Hence, we have shown the desired estimate.
\end{proof}

\subsection{Boundedness estimates}
The following various forms of boundedness estimates are consequences of local estimates for subsolutions appearing in \cite{KaWe24c} and \cite{Lia24}.

\begin{proposition}\label{Prop:bd}
    Assume that the kernel $K$ satisfies the upper bound in \eqref{eq:Kcomp} and \eqref{eq:coercive}. Let $u$ be a nonnegative, global solution in $(0,T]\times\R^d$ in the sense of \autoref{Def:global-sol}. Consider $x_o\in\R^d$ and $t_o\in(0,  \frac12 T)$.  Then, there exists $c>1$ depending only on $d$, $s$, $\lm$, $\Lm$, $T$ and $t_o$, such that
    \begin{align*}
        \sup_{(t_o,T]\times B_5(x_o)} u \le c\int_0^T\int_{B_{10}(x_o)} u(x,t)\d x \d t + c \int_0^T\int_{B_{10}^c(x_o)}\frac{u(x,t)}{|x-x_o|^{d+2s}}\d x \d t.
    \end{align*}
    Moreover, we can trace the dependence of $c$ as
    \[
    c=\widetilde{c}(d,s,\lm,\Lm)\bigg(\frac{T}{t_o}\bigg)^{d+2s} \vee \bigg[\bigg(\frac{T}{t_o}\bigg)^{q(d+2)+\frac12(d+1)}\bigg(\frac{10^{2s}}{T}+2^{d+2s}\bigg)^{2q}\bigg]
    \]
    where $2q=1+\frac{d}{2s}$.
\end{proposition}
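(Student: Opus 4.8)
The plan is to obtain the sup estimate in \autoref{Prop:bd} by combining a standard local $L^\infty$--$L^2$ bound for nonlocal subsolutions with a Moser-type iteration that upgrades $L^2$ to $L^1$, all the while keeping careful track of the nonlocal tail. First I would recall from \cite{KaWe24c,Lia24} the local boundedness estimate for nonnegative subsolutions: for a cylinder $Q_{2\rho}=(t_1-(2\rho)^{2s},t_1]\times B_{2\rho}(x_o)$ on which $u$ is a subsolution, one has
\begin{align*}
    \sup_{Q_\rho} u \le c\Big(\rho^{-2s}\rho^{-(d+2s)}\iint_{Q_{2\rho}} u^2\,\d x\d t\Big)^{1/2} + c\,\sup_{t\in(t_1-(2\rho)^{2s},t_1]}\rho^{2s}\tail(u_+;x_o,\rho)(t),
\end{align*}
where $\tail(u_+;x_o,\rho)(t)=\rho^{2s}\int_{B_\rho(x_o)^c} u_+(t,x)|x-x_o|^{-d-2s}\d x$; the exponents here are dictated by \eqref{eq:coercive} and the scaling $t\sim \rho^{2s}$. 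Applying this on the unit-scale cylinder $(t_o/2,T]\times B_5(x_o)$ (after rescaling time so that it fits the parabolic scaling, which is where the factors $10^{2s}/T$ and the powers of $T/t_o$ enter) reduces the problem to bounding the $L^2$ norm of $u$ on a slightly larger cylinder together with the tail, in terms of the $L^1$ quantities on the right-hand side.

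Next I would run the Moser iteration in the exponent to pass from $L^2$ to $L^1$. For $p\in(1,2]$ and nested cylinders $Q'\subset Q''$, the local energy estimate for $u^{p/2}$ (a subsolution-type inequality, obtained by testing with $u^{p-1}\phi^2$ and using the coercivity \eqref{eq:coercive}) yields a reverse-Hölder inequality of the form
\begin{align*}
    \Big(\dashint\!\!\!\!\int_{Q'} u^{\chi p}\Big)^{1/(\chi p)} \le \Big(\frac{c}{(\mathrm{dist}(Q',\partial Q''))^{2s}}\Big)^{1/p}\Big(\dashint\!\!\!\!\int_{Q''} u^{p}\Big)^{1/p} + (\text{tail contribution}),
\end{align*}
with $\chi=1+\tfrac{2s}{d}>1$ (the Sobolev gain). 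Iterating along a geometric sequence of exponents $p_k=\chi^k$ starting from $p_0=1$ and a shrinking sequence of radii summing to a fixed amount, the constants multiply to a convergent product, producing $\sup u \lesssim \iint u\,\d x\d t + (\text{tail})$ on the relevant cylinders; the total constant is the product of all iteration constants, and its bookkeeping is exactly what produces the stated explicit dependence $\widetilde c\,(T/t_o)^{\dots}(10^{2s}/T+2^{d+2s})^{2q}$ with $2q=1+\tfrac d{2s}$.

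The remaining, and I expect most delicate, point is the \emph{tail bookkeeping}: the tail term $\rho^{2s}\int_{B_\rho^c} u\,|x-x_o|^{-d-2s}\d x$ is scaling-critical for parabolic nonlocal problems (as stressed in the introduction), so it cannot simply be absorbed perturbatively. Since we only integrate the tail over the global exterior $B_{10}^c(x_o)$ in the final statement, I would split $\int_{B_\rho^c}=\int_{B_\rho^c\cap B_{10}}+\int_{B_{10}^c}$; the near part is handled by the interior $L^\infty$ bound already obtained (on a slightly larger ball, using that $u$ is a \emph{global} solution so we may enlarge the ball freely, at the cost of the $T/t_o$ powers), and the far part is exactly the second integral on the right-hand side, up to the factor $|x-x_o|^{-d-2s}\le 2^{d+2s}/(2^{d+2s}+|x-x_o|^{d+2s})$-type comparisons that generate the $2^{d+2s}$ summand in the constant. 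Throughout, each time we enlarge a ball by a fixed factor or rescale time by $T/t_o$, the local-to-global estimate pays a polynomial price in $T/t_o$; carefully summing these powers over the finitely many enlargement steps and the Moser iteration gives the exponent $q(d+2)+\tfrac12(d+1)$ together with the $(T/t_o)^{d+2s}$ alternative, which is the content of the explicit constant. I would present the argument by first stating the clean qualitative estimate, then, in a remark-style computation, retracing the constants step by step to extract the quantitative form.
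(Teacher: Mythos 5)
The paper's proof of \autoref{Prop:bd} is essentially a one-line citation: it invokes \cite[Proposition~5.1]{Lia24b} with the explicit parameter choices $\varep=1$, $\rho=10$, $\theta=T$, $\sig=\tfrac{T-t_o}{T}$, which \emph{already delivers} an $L^\infty$--$L^1$ estimate with the tail over $B_{10\sigma}^c(x_o)$ and with the precise constant stated in the proposition. The only remaining work is the elementary step of splitting $\int_{B_5^c}=\int_{B_{10}\setminus B_5}+\int_{B_{10}^c}$ and observing that $|x-x_o|^{-d-2s}\le 1$ on the annulus $B_{10}\setminus B_5$, which folds the near tail into the $L^1(B_{10})$ term. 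You, by contrast, propose to \emph{rebuild} the cited estimate from scratch: start from an $L^\infty$--$L^2$ local boundedness bound, upgrade to $L^1$ by Moser iteration, and then split the tail. That is indeed the kind of argument underlying \cite[Proposition~5.1]{Lia24b}, so your route is not wrong in spirit, but you are re-proving the lemma the paper is content to invoke.

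Two specific points where your sketch is weaker or off-track. First, your tail handling for the near annulus $B_\rho^c\cap B_{10}$ via ``the interior $L^\infty$ bound already obtained on a slightly larger ball'' is overcomplicated and risks circularity: you would be using the very $L^\infty$ bound you are in the middle of establishing to control a tail term that enters its own derivation. The paper's version of this step is completely elementary and avoids any such loop -- on $B_{10}\setminus B_5$ one simply has $|x-x_o|^{-d-2s}\le 1$, so the near tail is dominated pointwise by $u$ and absorbed into $\int_0^T\int_{B_{10}} u$. Second, the explicit constant $\widetilde c\,(T/t_o)^{d+2s}\vee[(T/t_o)^{q(d+2)+\frac12(d+1)}(10^{2s}/T+2^{d+2s})^{2q}]$ is not something you can credibly extract by ``carefully summing powers over finitely many steps'' without actually fixing the iteration scheme; in the paper it falls out mechanically from substituting $\sigma=(T-t_o)/T$ (so $1-\sigma=t_o/T$) into the constant displayed in \cite[Proposition~5.1]{Lia24b}. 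If you want a self-contained argument, you should state precisely which Caccioppoli/energy estimate you iterate, which exponent chain $p_k=\chi^k$ you use, and verify the constant at the end; otherwise the cleaner route is exactly the paper's: cite the known $L^\infty$--$L^1$ bound with tail and only do the elementary annulus split.
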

\begin{proof}
According to \cite[Proposition~5.1]{Lia24b} with $\varep=1$, $\rho=10$, $\theta=T$ and $\sig=\frac{T-t_o}{T}$, we have
    \begin{align*}
\sup_{(t_o, T]\times B_{10\sig}(x_o)} u &\le \Big(\frac{4}{1-\sig}\Big)^{d+2s} \int_0^T\int_{B^c_{10\sig}(x_o)}\frac{u(t,x)}{|x-x_o|^{d+2s}} \d x\d t\\
&\qquad+\frac{\displaystyle c   \Big(\frac{10^{2s}}{T}+\frac1{\sig^{d+2s}}\Big)^{2q}  }{(1-\sig)^{ q(d+2) +\frac{d+1}{2}}}  \int_0^T\int_{B_{10}(x_o)} u(t,x)\,\d x\d t 
\end{align*}
for some $c=c(d,s,\lm,\Lm)$. Note that $\sig\in(\frac12, 1)$, and hence we may take supreme over $B_5(x_o)$ on the left-hand side, whereas the first integral on the right-hand side can be estimated by
\begin{align*}
     \int_0^T\int_{B^c_{5}(x_o)}&\frac{u(t,x)}{|x-x_o|^{d+2s}} \d x\d t \\
     &=  \int_0^T\int_{B_{10}(x_o)\setminus B_{5}(x_o)}\frac{u(t,x)}{|x-x_o|^{d+2s}} \d x\d t  + \int_0^T\int_{B^c_{10}(x_o)}\frac{u(t,x)}{|x-x_o|^{d+2s}} \d x\d t\\
     & \le \int_0^T\int_{B_{10}(x_o)} u(t,x) \d x\d t + \int_0^T\int_{B^c_{10}(x_o)}\frac{u(t,x)}{|x-x_o|^{d+2s}} \d x\d t.
\end{align*}
Combining these estimates, we conclude. 
\end{proof}

\begin{proposition}\label{Prop:bd:2}
   Assume that the kernel $K$ satisfies the upper bound in \eqref{eq:Kcomp} and \eqref{eq:coercive}.  Let $u$ be a nonnegative, global solution in $(0,T]\times\R^d$ in the sense of \autoref{Def:global-sol}. Consider $x_o\in\R^d$ and $t_o\in(0, T]$. Then, there exists $c>1$ depending only on the data $\{d, s,\lm,\Lm\}$, such that
    \begin{align*}
        \sup_{(\frac14 t_o, t_o]\times B_1(x_o)} u \le  \frac{c}{1\wedge t_o^{1+\frac{d}{2s}}} \int_0^{t_o}\int_{\R^d}\frac{u(x,t)}{(1+|x-x_o|)^{d+2s}}\d x \d t.
    \end{align*}
\end{proposition}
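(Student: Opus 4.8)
The plan is to deduce \autoref{Prop:bd:2} from \autoref{Prop:bd} by a scaling and covering argument, exploiting that the left-hand side is localized to the small cylinder $(\frac14 t_o, t_o] \times B_1(x_o)$ while the right-hand side is a global tail quantity. First I would perform the parabolic rescaling that maps the time interval $(0,t_o]$ onto a fixed interval $(0,\mathsf{T}]$ (with $\mathsf{T}$ a universal constant, say $\mathsf{T}=1$), setting $v(t,x) = u(t_o t, t_o^{1/2s} x)$, which is again a nonnegative global weak solution to an equation of the same type \eqref{Eq:1:1}--\eqref{eq:Kcomp} with a rescaled kernel obeying the same bounds (\eqref{eq:coercive} is scale-invariant). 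Under this rescaling, $B_1(x_o)$ becomes $B_{t_o^{-1/2s}}(\tilde x_o)$, so one must distinguish the regimes $t_o \ge 1$ (the rescaled ball shrinks, so it fits inside $B_5$ and \autoref{Prop:bd} applies more or less directly) and $t_o \le 1$ (the rescaled ball grows, and one needs a covering of $B_{t_o^{-1/2s}}$ by $\sim t_o^{-d/2s}$ unit balls, applying \autoref{Prop:bd} on each).

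Next, after applying \autoref{Prop:bd} (in rescaled variables) on each ball of the cover centered at points $y_i$, I would sum the resulting estimates. The bulk terms $\int_0^{\mathsf T}\int_{B_{10}(y_i)} v$ sum, with bounded overlap, to a multiple of $\int_0^{\mathsf T}\int_{\R^d} v$; the tail terms $\int_0^{\mathsf T}\int_{B_{10}^c(y_i)} \frac{v}{|x-y_i|^{d+2s}}$ must be re-expressed so that all of them are dominated by the single weighted integral $\int_0^{\mathsf T}\int_{\R^d}\frac{v(t,x)}{(1+|x-\tilde x_o|)^{d+2s}}\,\d x\,\d t$ against the weight centered at $\tilde x_o$. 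This is where \autoref{Lm:tail-weight-estimate} enters: the point is that $\int_{B_1^c}\frac{\d y}{(1+|x-y|)^{d+2s}}\frac{1}{|y|^{d+2s}} \lesssim (1+|x|)^{-d-2s}$ lets one trade a sum of Euclidean-power tails over a cloud of centers for one weighted tail, after noting that each $y_i$ lies in $B_{t_o^{-1/2s}}$ and the weight $(1+|x-y_i|)^{-d-2s}$ can be compared to $(1+|x-\tilde x_o|)^{-d-2s}$ up to the measure $\mu$ of the cover. Finally I would undo the rescaling: a function in the original variables satisfies $\int_0^{\mathsf T}\int_{\R^d}\frac{v(t,x)}{(1+|x-\tilde x_o|)^{d+2s}}\,\d x\,\d t = t_o^{-1}\int_0^{t_o}\int_{\R^d}\frac{u(\sigma,\xi)}{(1 + t_o^{-1/2s}|\xi - x_o|)^{d+2s}}\,\d\xi\,\d\sigma$, and for $t_o \le 1$ one has $1 + t_o^{-1/2s}|\xi-x_o| \ge t_o^{-1/2s}(1+|\xi-x_o|)$ up to constants, producing the factor $t_o^{-1-d/2s}$; for $t_o\ge 1$ the weight comparison is trivial and one gets a bounded constant, which is consistent with the claimed $\frac{c}{1 \wedge t_o^{1+d/2s}}$.

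I expect the main obstacle to be the bookkeeping in the tail terms under the covering argument when $t_o$ is small: one must control the number of balls ($\sim t_o^{-d/2s}$), keep track of how the constant from \autoref{Prop:bd} degenerates as the rescaled cylinder's aspect ratio changes (the $\bigl(\mathsf T/t_o'\bigr)$-type powers in the stated dependence of that proposition), and verify that after summation nothing worse than the asserted power $1 + d/2s$ of $t_o^{-1}$ survives. A clean way to avoid some of this is to instead apply \autoref{Prop:bd} directly in the original variables with $t_o$ playing the role of $T$ there — taking its ``$T$'' to be $t_o$ and its ``$t_o$'' to be $\frac14 t_o$ — on a covering of $B_1(x_o)$ by balls of radius $\sim t_o^{1/2s} \wedge 1$ (so that each has the right parabolic aspect), and then invoking \autoref{Lm:tail-weight-estimate} together with the explicit constant dependence recorded in \autoref{Prop:bd} (note that with $T/t_o = 4$ there, the $(T/t_o)$-powers are harmless universal constants, and the surviving degeneracy is exactly the $\bigl(\frac{(t_o^{1/2s}\wedge 1)^{-2s}}{t_o} + \dots\bigr)^{2q}$ factor with $2q = 1 + d/2s$, which is what yields $t_o^{-1-d/2s}$). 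Either route works; the second keeps the constant tracking inside the already-proven \autoref{Prop:bd} and reduces the new work to the geometric covering estimate and \autoref{Lm:tail-weight-estimate}.
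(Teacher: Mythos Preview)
Your second route is essentially what the paper does, but you have overcomplicated it. The paper applies \autoref{Prop:bd} \emph{once}, directly at the fixed scale (balls of radius $5$ and $10$), with ``$T$'' there equal to $t_o$ and ``$t_o$'' there equal to $\tfrac14 t_o$. Since the ratio is $4$, all the $(T/t_o)$-powers are universal, and the only degenerating factor is $\big(\tfrac{10^{2s}}{t_o}+2^{d+2s}\big)^{2q}$ with $2q=1+\tfrac{d}{2s}$, which is exactly $c/(1\wedge t_o^{1+d/2s})$. No covering is needed: \autoref{Prop:bd} already controls the supremum on $B_5(x_o)\supset B_1(x_o)$. The bulk and tail integrals on the right are then bounded by the weighted integral via the trivial pointwise comparisons $(1+|x-x_o|)\le 11$ on $B_{10}(x_o)$ and $|x-x_o|\ge\tfrac12(1+|x-x_o|)$ on $B_{10}^c(x_o)$. \autoref{Lm:tail-weight-estimate} plays no role here.

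Your first route, as written, has a real gap. After rescaling you try to dominate each tail $\int_{B_{10}^c(y_i)}\frac{v}{|x-y_i|^{d+2s}}$ by the weighted integral $\int_{\R^d}\frac{v}{(1+|x-\tilde x_o|)^{d+2s}}$ with the \emph{scale-$1$} weight centred at $\tilde x_o$. This is false once $|y_i-\tilde x_o|$ is large: take $v$ to be (a smoothed version of) the heat kernel centred at some $y_i$ with $|y_i-\tilde x_o|\sim R=t_o^{-1/2s}$; the supremum is of order $1$ while the scale-$1$ weighted integral is of order $R^{-d-2s}$. What \emph{is} true is the bound against the weight at scale $R$, i.e.\ $(1+R^{-1}|x-\tilde x_o|)^{-d-2s}$, and rescaling that back yields the correct result --- but then the detour through covering and \autoref{Lm:tail-weight-estimate} is no shorter than applying \autoref{Prop:bd} once in the original variables. (Relatedly, your comparison $1+t_o^{-1/2s}|\xi-x_o|\ge t_o^{-1/2s}(1+|\xi-x_o|)$ goes the wrong way for $t_o<1$: the left side is \emph{at most} the right side.)
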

\begin{proof}
Let us view $u$ as a global solution in $(0,t_o]\times \R^d$ and apply \autoref{Prop:bd} with $(t_o,T)$ replaced by $(\frac14 t_o, t_o)$ to obtain
    \begin{align*}
\sup_{(\frac14 t_o, t_o]\times  B_{1}(x_o)} u  &\le c     \int_0^{t_o}\int_{B_{10}(x_o)} u(t,x)\,\d x\d t+ c \int_0^{t_o}\int_{B^c_{10}(x_o)}\frac{u(t,x)}{|x-x_o|^{d+2s}} \d x\d t 
\end{align*}
for some $c=c(d,s,\lm,\Lm, t_o)$ which can be traced as
\[
c=\frac{\widetilde{c}(d,s,\lm,\Lm)}{1\wedge t_o^{1+\frac{d}{2s}}}.
\]
The first integral on the right-hand side is estimated by
\[
  11^{d+2s} \int_0^{t_o}\int_{B_{10}(x_o)} \frac{  u(t,x)}{(1+|x-x_o|)^{d+2s}}\,\d x\d t,
\]
because $1+|x-x_o|\le 11$ when $x\in B_{10}(x_o)$,
whereas the second integral is  estimated by
\[
2^{d+2s} \int_0^{t_o}\int_{B^c_{10}(x_o)}\frac{u(t,x)}{(1+|x-x_o|)^{d+2s}} \d x\d t,
\]
because $|x-x_o|\ge\frac12 (1+|x-x_o|)$ when $x\in B^c_{10}(x_o)$.
Now, we can conclude by combining the two integrals.
\end{proof}

\subsection{Weak Harnack estimates}
The following positivity estimates are a consequence of the local estimates in \cite{KaWe24c}; see also \cite{Lia24}.

\begin{proposition}\label{Prop:WHI-global}
Assume that the kernel $K$ satisfies \eqref{eq:Kcomp}.
    Let $u$ be a nonnegative, global solution in $(0, T]\times\R^d$ in the sense of \autoref{Def:global-sol}.  Consider $x_o\in\R^d$ and $t_o\in(0, T]$. Then, there exists $c>1$ depending only on the data $\{d, s,\lm,\Lm\}$, such that 
    \begin{align*}
    c\,\inf_{(\frac34 t_o, t_o]\times B_{t_o^{1/2s}}(x_o)} u(t,\cdot)\ge \frac{1}{1\vee t_o^{1+\frac{d}{2s}} }\int_{\frac14 t_o}^{\frac12 t_o}\int_{\R^d}\frac{u(t,x)}{(1+|x-x_o|)^{d+2s}}\d x \d t.
    \end{align*}
\end{proposition}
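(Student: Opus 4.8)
The plan is to deduce this global weak Harnack estimate from the \emph{local} parabolic weak Harnack inequality established in \cite{KaWe24c} (cf.~\eqref{eq:local-Harnack}), after a suitable rescaling that normalizes the intrinsic parabolic cylinder to a unit-size object. First I would rescale: set $v(t,x) := u(t_o t, t_o^{1/2s} x)$, which, since the kernel $K$ is homogeneous of the correct order under the scaling $(t,x)\mapsto(t_o t, t_o^{1/2s}x)$ up to replacing $K$ by a rescaled kernel that still satisfies \eqref{eq:Kcomp} with the \emph{same} constants $\lm,\Lm$, is again a nonnegative global weak solution on $(0,1]\times\R^d$. Under this scaling the cylinder $(\tfrac34 t_o, t_o]\times B_{t_o^{1/2s}}(x_o)$ becomes (after also translating $x_o$ to the origin) $(\tfrac34,1]\times B_1$, and the infimum of $u$ there equals the infimum of $v$ there. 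The task is then reduced to proving the claim for $t_o=1$, $x_o=0$.

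Next I would invoke the local weak parabolic Harnack inequality of \cite{KaWe24c}: for a nonnegative weak supersolution on a cylinder of the form $(-1,1)\times B_2$ one has
\[
\inf_{(\tfrac34,1]\times B_1} v \;\ge\; c^{-1}\Bigg( \int_{-\tfrac12}^{-\tfrac14}\dashint_{B_1} v(t,x)\,\d x\,\d t \;-\; \sup_{t\in(-\tfrac12,1)}\tail(v_-;t,B_1)\Bigg),
\]
or rather the version phrased with a nonlocal tail term on the right-hand side; since $v\ge 0$ here, the negative-part tail vanishes, and the tail contribution on the side of the \emph{positivity} (the $\int\int$ over past times of $v$ against the weight $(1+|x|)^{-d-2s}$) is precisely what appears in the statement. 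Concretely, the weak Harnack estimate in \cite{KaWe24c} for local solutions reads (after translating the time origin so that the cylinder is $(\tfrac14,1]\times B_1$ with data on $(0,1)\times B_2$) as a lower bound for $\inf_{(\tfrac34,1]\times B_1}v$ in terms of $\int_{1/4}^{1/2}\int_{\R^d} v(t,x)(1+|x|)^{-d-2s}\,\d x\,\d t$, because the nonlocal tail term in the parabolic weak Harnack of \cite{KaWe24c} is exactly an integral of $v$ over an earlier time slab against the weight $(\tau^{1/2s}+|x-x_o|)^{-d-2s}$, which at unit scale is comparable to $(1+|x|)^{-d-2s}$. This is the key input; no new PDE argument is needed beyond bookkeeping of the scaling of the tail.

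Finally I would undo the rescaling. The spatial integral $\int_{\R^d} v(t,x)(1+|x|)^{-d-2s}\,\d x$ transforms back, via $y = t_o^{1/2s}x$, $\d x = t_o^{-d/2s}\,\d y$, into $t_o^{-d/2s}\int_{\R^d} u(t_o t, y)(1+t_o^{-1/2s}|y-x_o|)^{-d-2s}\,\d y$, and since $(1+t_o^{-1/2s}|y-x_o|)^{-d-2s} = t_o^{(d+2s)/2s}(t_o^{1/2s}+|y-x_o|)^{-d-2s}$, one picks up a net factor $t_o^{-d/2s}\cdot t_o^{(d+2s)/2s} = t_o$ from the space integral and a further $t_o$ from the time integral $\d t \mapsto t_o^{-1}\d(t_o t)$ — wait, rather the time variable contributes $t_o$ when passing from $\int_{1/4}^{1/2}\d t$ (in $v$-time) to $\int_{t_o/4}^{t_o/2}\d t$ (in $u$-time), giving a factor $t_o^{-1}$ in the comparison, so the combined factor on the right-hand integral is $t_o \cdot t_o^{-1} \cdot$ (something); I would carefully collect these to recover precisely the weight $\tfrac{1}{1\vee t_o^{1+d/2s}}$ stated. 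The only delicate point is that the local weak Harnack constant from \cite{KaWe24c} is scale-invariant only up to the tail, so I must track that the tail term produced at scale $t_o$ is exactly $t_o^{-(1+d/2s)}$ times the integral $\int_{t_o/4}^{t_o/2}\int_{\R^d} u\,(1+|x-x_o|)^{-d-2s}$, and then replace $t_o^{-(1+d/2s)}$ by $(1\vee t_o^{1+d/2s})^{-1}$, which is a legitimate (lossy but correct) bound when $t_o\le 1$, and requires a separate (easier) argument or simply the observation that for $t_o\ge 1$ the weight $(t_o^{1/2s}+|x-x_o|)^{-d-2s}\le (1+|x-x_o|)^{-d-2s}$ so the scaling can be dominated without the blow-up factor.

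The main obstacle I anticipate is not any analytic difficulty but the precise bookkeeping of how the nonlocal \emph{tail} term scales: one must verify that the tail term appearing in the parabolic weak Harnack inequality of \cite{KaWe24c} at scale $t_o^{1/2s}$ — which is naturally written as $\int_{t_o/4}^{t_o/2}\tail(u_+;t,B_{t_o^{1/2s}}(x_o))\,\d t$ with $\tail(u_+;t,B_R(x_o)) \sim R^{2s}\int_{\R^d}\frac{u(t,x)}{(R+|x-x_o|)^{d+2s}}\d x$ — produces, after dividing by the parabolic time-length $t_o$, exactly the prefactor appearing in the statement, and that one is allowed to replace the sharp $t_o$-dependence by the stated $1\vee t_o^{1+d/2s}$. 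All of this is routine once the scaling conventions are fixed, but it is where errors are easiest to make, so I would write it out explicitly.
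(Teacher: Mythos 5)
Your proposal is correct and rests on the same key input as the paper, namely the parabolic weak Harnack inequality of \cite{KaWe24c} applied at the intrinsic scale: the paper applies \cite[Theorem~1.9]{KaWe24c} directly with the choice $\rho=(t_o/4)^{1/2s}$ (so that $2^{1/s}\rho=t_o^{1/2s}$ and $t_o-3\rho^{2s}=\tfrac14 t_o$, etc.), which is equivalent to your explicit rescaling $v(t,x)=u(t_o t, t_o^{1/2s}x+x_o)$. One small slip to fix when you write it out: in the case $t_o\ge 1$ the observation you need is the \emph{lower} bound $(t_o^{1/2s}+|x-x_o|)^{-d-2s}\ge t_o^{-(1+d/2s)}(1+|x-x_o|)^{-d-2s}$ (coming from $t_o^{1/2s}+|x-x_o|\le t_o^{1/2s}(1+|x-x_o|)$), not the upper bound $(t_o^{1/2s}+|x-x_o|)^{-d-2s}\le(1+|x-x_o|)^{-d-2s}$ you stated; with this correction the pointwise comparison of the weight against $(1\vee t_o^{1+d/2s})^{-1}(1+|x-x_o|)^{-d-2s}$ holds in both cases $t_o\le 1$ and $t_o\ge 1$ and closes the argument exactly as in the paper.
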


\begin{proof}
    By \cite[Theorem~1.9]{KaWe24c}, there exists $c=c(d,s,\lm,\Lm)$, such that if for some $\rho>0$ and $t_o\in(0,T)$ it holds that
    \[
    (t_o-4\rho^{2s},t_o]\subset (0,T],
    \]
    then we have for any $t\in(t_o-\rho^{2s},t_o]$ that
    \begin{align*}
        c\, \inf_{B_{2^{1/s}\rho}(x_o)}u(t,\cdot) &\ge \bint_{t_o-3\rho^{2s}}^{t_o-2\rho^{2s}}\bint_{B_\rho(x_o)} u(t,x) \d x \d t 
        +\int_{t_o-3\rho^{2s}}^{t_o-2\rho^{2s}}\int_{B^c_\rho(x_o)} \frac{u(t,x)}{|x-x_o|^{d+2s}} \d x \d t\\
        &\ge \frac{1}{1\vee (\rho^{2s}|B_\rho|)}\int_{t_o-3\rho^{2s}}^{t_o-2\rho^{2s}}\int_{\R^d}\frac{u(t,x)}{(1+|x-x_o|)^{d+2s}}\d x \d t. 
    \end{align*}
 
    If we choose $\rho$ to satisfy $t_o=4\rho^{2s}$, then we obtain for any $t\in(\frac34 t_o, t_o]$ that
    \[
    c\,\inf_{B_{t_o^{1/2s}}(x_o)} u(t,\cdot)\ge \frac{1}{1\vee t_o^{1+\frac{d}{2s}} }\int_{\frac14 t_o}^{\frac12 t_o}\int_{\R^d}\frac{u(t,x)}{(1+|x-x_o|)^{d+2s}}\d x \d t.
    \]
    This finishes the proof.
\end{proof}

\subsection{Continuity of local solutions}
The following local continuity result, which we take from \cite[Theorem~1.1]{Lia24c}, is needed when we discuss the failure of time-insensitive Harnack estimates for local solutions. 
\begin{theorem}[modulus of continuity]\label{Thm:mod-con}
Assume that the kernel $K$ satisfies \eqref{eq:Kcomp}. Let $u$ be a locally bounded, local, weak solution to \eqref{Eq:1:1} in $I\times\Om$ in the sense of \autoref{Def:local-sol}.
	Then $u$ is locally continuous in $I\times\Om$. More precisely, consider the  cylinder  $(t_o-\widetilde{R}^{2s},t_o]\times B_{\widetilde{R}}(x_o)$
 included in $I\times \Om$ and a parameter $R\in(0,\widetilde{R})$.
	There exist constants $c>1$ and $\be\in(0,1)$ depending on the data $\{d, s, \lm, \Lm\}$, such that for any $r\in(0,R)$, we have
	\begin{equation*}
	\osc_{(t_o-r^{2s},t_o]\times B_{ r}(x_o)}u \le 2 \boldsymbol\om \Big(\frac{r}{R}\Big)^{\be}  + c  \int^{t_o}_{t_o-(rR)^{s}}    \int_{B^c_{\widetilde{R}}(x_o)} \frac{|u(t,x)|}{|x-x_o|^{d+2s}}\d x \d t,
	\end{equation*} 
where
 $$\boldsymbol\om=2\sup_{(t_o-\widetilde{R}^{2s},t_o]\times B_{\widetilde{ R}}(x_o)}|u| +\int^{t_o}_{t_o - \widetilde{R}^{2s}}    \int_{ B^c_{\widetilde{R}}(x_o)} \frac{|u(t,x)|}{|x-x_o|^{d+2s}}\d x \d t.$$
\end{theorem}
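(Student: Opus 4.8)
The plan is to prove this as a De Giorgi--Nash--Moser oscillation-decay estimate, along the lines of the parabolic regularity theory for nonlocal equations cited in the introduction. All the analytic input we need is already available under \eqref{eq:Kcomp}: Caccioppoli/energy estimates for the truncations $(u-k)_\pm$ (which control the local $H^s$-seminorm modulo a nonlocal tail error), a local boundedness estimate of De Giorgi type (in the spirit of \autoref{Prop:bd}), and a local weak Harnack / expansion-of-positivity estimate for nonnegative supersolutions (the local counterpart of \autoref{Prop:WHI-global}, as established in \cite{KaWe24c}). The task is to combine these into a quantitative oscillation reduction and then iterate.

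\textbf{Step 1: Oscillation reduction with a tail term.} Fix a sub-cylinder $Q_\rho := (t_o - \rho^{2s}, t_o]\times B_\rho(x_o)$ contained in $(t_o - \widetilde R^{2s},t_o]\times B_{\widetilde R}(x_o)$, and set $\mu_+=\sup_{Q_\rho}u$, $\mu_-=\inf_{Q_\rho}u$. At least one of the two nonnegative supersolutions $u-\mu_-$ or $\mu_+-u$ occupies, in measure, half of the lower portion $(t_o-\rho^{2s},t_o-\tfrac12\rho^{2s}]\times B_{\rho/2}(x_o)$ at the level $\tfrac12(\mu_+-\mu_-)$; applying the expansion-of-positivity machinery to that function gives a pointwise lower bound for it over a smaller cylinder $Q_{\sigma\rho}$. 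Since $u$ is only a \emph{local} solution, truncating it by $\mu_\pm$ is not free — the energy estimates produce the nonlocal tail — so one arrives at
\[
\osc_{Q_{\sigma\rho}} u \;\le\; (1-\eta)\,\osc_{Q_\rho} u \;+\; c\,\mathcal{T}(\rho),
\]
for some $\sigma,\eta\in(0,1)$ depending only on the data, where $\mathcal{T}(\rho)$ is the tail of $u$ relative to the exterior of $Q_\rho$, measured over a time interval to be chosen. This is the usual mechanism; the only subtlety, already flagged in the introduction, is that such tail terms are scaling-critical for parabolic problems, so $\mathcal{T}(\rho)$ must be taken over a carefully tuned time window rather than over all of $(t_o-\rho^{2s},t_o]$.

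\textbf{Step 2: Iteration and bookkeeping of the tail.} Put $\rho_j=\sigma^j R$ and iterate Step 1:
\[
\osc_{Q_{\rho_j}} u \;\le\; (1-\eta)^j\,\osc_{Q_R} u \;+\; c\sum_{i=0}^{j-1} (1-\eta)^{\,j-1-i}\,\mathcal{T}(\rho_i).
\]
Bound $\osc_{Q_R}u\le 2\sup_{Q_{\widetilde R}}|u|\le\boldsymbol\omega$ and fix $\beta\in(0,1)$ with $(1-\eta)=\sigma^\beta$, so the first term is $\le\boldsymbol\omega(\rho_j/R)^\beta$. For the tail sum, split each $\mathcal{T}(\rho_i)$ into (i) the contribution from the annulus $B_{\widetilde R}(x_o)\setminus B_{\rho_i}(x_o)$, where $|u|\le\sup_{Q_{\widetilde R}}|u|$ and integrating the kernel gives a power of $\rho_i/\widetilde R$ that is summable against $(1-\eta)^{j-1-i}$ and hence absorbed into the $(\rho_j/R)^\beta\boldsymbol\omega$ term (after possibly shrinking $\beta$), and (ii) the genuine exterior contribution from $B^c_{\widetilde R}(x_o)$, which is monotone in its time interval, so the whole sum of exterior contributions is dominated by a single exterior integral; tracking the time windows as the iteration runs from scale $R$ down to scale $r$ shows this integral is taken over $(t_o-(rR)^s,t_o]$. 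Finally, for $r\in(0,R)$ pick $j$ with $\rho_{j+1}\le r<\rho_j$, so $Q_r\subset Q_{\rho_j}$ and $(\rho_j/R)^\beta\le\sigma^{-\beta}(r/R)^\beta$; renaming constants yields the stated estimate.

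\textbf{Main obstacle.} The Caccioppoli and De Giorgi estimates and the expansion of positivity are routine under \eqref{eq:Kcomp}. The delicate point lies entirely in the tail bookkeeping: one must design the time-localization of $\mathcal{T}(\rho)$ in Step 1 so that, simultaneously, Step 1 holds with that localization and the summation in Step 2 collapses precisely to the geometric-mean time scale $(rR)^s$ — rather than to $r^{2s}$ (which one cannot reach) or to $\widetilde R^{2s}$ (which would be useless). This matching of parabolic time scales for the nonlocal tail is the same phenomenon that forces a waiting-time in the weak Harnack estimate \eqref{eq:local-Harnack}, and it is what distinguishes the nonlocal parabolic continuity theory from both the nonlocal elliptic and the local parabolic settings.
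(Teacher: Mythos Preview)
The paper does not prove this theorem; it is simply quoted from \cite[Theorem~1.1]{Lia24c} as an external input. Your outline is the correct strategy and matches the approach of that reference: a De Giorgi--type oscillation reduction with a tail error, followed by iteration over geometric scales.

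One comment on the unresolved ``main obstacle''. With the standard parabolic cylinders $Q_\rho=(t_o-\rho^{2s},t_o]\times B_\rho(x_o)$ you wrote down in Step~1, the exterior-tail contributions at scale $\rho_i$ live over time windows of length $\rho_i^{2s}$, and the weighted sum in Step~2 does \emph{not} collapse to the interval $(t_o-(rR)^s,t_o]$; depending on how one bounds things, one either picks up the full window of length $R^{2s}$ (useless) or a quantity of order $r^{\beta}R^{2s-\beta}$ times the tail density (also useless, since $\beta$ is determined by $\eta,\sigma$ and need not equal $s$). The resolution in \cite{Lia24c} is to run the oscillation reduction on \emph{non-standard} cylinders whose temporal length at spatial scale $\rho_j$ is $R^{s}\rho_j^{s}$ rather than $\rho_j^{2s}$; the expansion of positivity still applies inside these cylinders (they are thinner than the natural ones once $\rho_j<R$), and the exterior tail at the terminal scale $\rho_j\approx r$ is then automatically over a time interval of length $(rR)^s$. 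This is the missing design choice in your Step~1; once it is made, your Step~2 goes through verbatim. Everything else in your sketch --- the alternative argument on $(u-\mu_-)$ versus $(\mu_+-u)$, the splitting of the annular tail and its absorption into the $\boldsymbol\omega(r/R)^\beta$ term --- is correct.
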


\begin{remark}
    Even though the above theorem is stated under condition \eqref{eq:Kcomp} on $K$, the proof in \cite{Lia24c} is robust and hinges upon an expansion of positivity which can also be produced under more general conditions; see~\autoref{lemma:expansion-of-positivity}.
\end{remark}

\section{Some local estimates for supersolutions}
\label{sec:local}

In this section we prove two additional local estimates for local supersolutions in the sense of \autoref{Def:local-sol}. One is a reverse H\"older inequality; the other is an energy decay estimate. They will play important roles in  proving both the time-insensitive Harnack estimates and the improved weak Harnack estimate.

\subsection{A reverse H\"older estimate}

Let us first recall the following lemma, which follows by Moser iteration for small positive exponents. For $q \le 1$, it is completely standard. In order to obtain $1 < q < 1 + \frac{2s}{d}$, one has to do one more iteration step.

\begin{proposition}\label{Lm:Lq-Lsig}
Assume that the kernel $K$ satisfies the upper bound in \eqref{eq:Kcomp} and \eqref{eq:coercive}.
Let $u\ge0$ be a weak supersolution in $(0,4) \times B_4$ in the sense of \autoref{Def:local-sol}. Then, for any $\sig$ and $q$ satisfying $0 < \sigma < q < 1 + \frac{2s}{d}$, it holds
\begin{align*}
\bigg(\int_{0}^{\frac12} \int_{B_{\frac12}} u^q(t,x) \d x \d t \bigg)^{\frac1q} \le c \bigg(\int_{0}^{1} \int_{B_1} u^{\sigma}(t,x) \d x \d t \bigg)^{\frac1\sigma},
\end{align*}
where $c > 1$ depends only on the data $\{d, s, \lambda, \Lambda\}$, $\sigma$ and $q$.
\end{proposition}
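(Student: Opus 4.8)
The plan is to run a Moser iteration based on testing the equation with (small) negative powers of the solution, along the lines of the by-now standard De Giorgi--Nash--Moser theory for nonlocal parabolic equations, and to observe that the admissible range of exponents reaches slightly past $1$ by performing one additional gain-of-integrability step. Throughout I would replace $u$ by $\bar u:=u+\delta$ with $\delta>0$, prove the estimate for $\bar u$ with a constant independent of $\delta$, and then let $\delta\searrow 0$ by monotone convergence; this makes all the negative powers $\bar u^{-\eps}$ below well defined. I assume $d>2s$ below, the case $d\le 2s$ being only easier since the fractional Sobolev embedding is then available with an arbitrarily large gain.

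\emph{Step 1 (Caccioppoli estimate for powers).} For $\eps\in(0,1)$ and cut-offs $\eta=\eta(x)$, $\zeta=\zeta(t)$ adapted to a pair of nested space-time cylinders, I would test the weak supersolution inequality, in the mollified form of the Remark following \autoref{Def:global-sol}, with the nonnegative, compactly supported function $\phi=\eta^2\zeta^2\bar u^{-\eps}$ (admissible after the usual truncation/mollification since $\bar u^{-\eps}\le\delta^{-\eps}$). The time term yields both a bound on $\esssup_t\int(\eta\zeta)^2\bar u^{1-\eps}$ and a term $\iint|\partial_t(\zeta^2)|\,\eta^2\bar u^{1-\eps}$. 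For the bilinear part $\int\cE^{(t)}(\bar u,\eta^2\zeta^2\bar u^{-\eps})\,\d t$ I would apply \autoref{lemma:algebraic-est} (valid since $\eps\in(0,1)$), together with the elementary inequality $(t^{(1-\eps)/2}-s^{(1-\eps)/2})^2\le (t-s)^2\,(t^{-1-\eps}\wedge s^{-1-\eps})$ and the standard cut-off manipulations that trade $\eta\zeta w(x)-\eta\zeta w(y)$ against $(\eta\zeta)(x)\,(w(x)-w(y))$ up to error terms, where $w:=\bar u^{(1-\eps)/2}$. This bounds the bilinear term below by a coercive piece $c_1\iint(\eta\zeta w(x)-\eta\zeta w(y))^2K\,\d y\,\d x$ minus an error piece $c_2\iint(w^2(x)\vee w^2(y))(\eta\zeta(x)-\eta\zeta(y))^2K\,\d y\,\d x$. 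On the coercive piece I would invoke \eqref{eq:coercive} to produce $\lm\int[(\eta\zeta w)(t)]^2_{H^s}\,\d t$; on the error piece and on the nonlocal interactions of $\supp\eta$ with its complement I would use the upper bound in \eqref{eq:Kcomp} and the properties of $\eta,\zeta$. Crucially, the single dangerous nonlocal contribution, $-\iint_{B_\rho\times B_\rho^c}\bar u(y)\,\phi(x)K\,\d y\,\d x$, is $\le 0$ because $\phi\ge 0$, so it may simply be discarded; the remaining nonlocal piece $\iint_{B_\rho\times B_\rho^c}\bar u(x)\phi(x)K$ is controlled by $\rho^{-2s}\int(\eta\zeta)^2w^2$. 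This is exactly what allows the right-hand side of the proposition to be free of a tail term. All in all one obtains, on nested cylinders $Q'\subset Q$,
\begin{align*}
\esssup_{t}\int_{\R^d}(\eta\zeta w)^2(t,x)\,\d x + \int\big[(\eta\zeta w)(t)\big]^2_{H^s(\R^d)}\,\d t \;\le\; \frac{c}{1-\eps}\Big(\tfrac1{\rho^{2s}}+\tfrac1{\rho_t}\Big)\iint_{Q} w^2\,\d x\,\d t .
\end{align*}

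\emph{Step 2 (Gain of integrability and iteration).} Combining the spatial fractional Sobolev inequality with the $L^\infty_tL^2_x$ bound, i.e.\ the standard parabolic Sobolev embedding, the estimate of Step~1 upgrades to $\|\bar u\|_{L^{p\kappa}(Q')}\le C\|\bar u\|_{L^{p}(Q)}$ with $p:=1-\eps\in(0,1)$ and $\kappa:=1+\tfrac{2s}{d}$. I would then iterate this over a geometric sequence of exponents and of shrinking cylinders exhausting $(0,\tfrac12)\times B_{1/2}$ from inside $(0,1)\times B_1$: starting from $\sig$, finitely many steps (finitely many since $\sig,q$ are fixed) reach any exponent $<1$, which is the classical weak Harnack range; to reach a target $q\in[1,\kappa)$ one performs precisely one further gain step with input exponent $p_0=1-\eps\in(q/\kappa,1)$, legitimate since $\eps\in(0,1)$, producing control at exponent $p_0\kappa\ge q$, whence the $L^q$ bound follows by H\"older on the smaller cylinder. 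The restriction $q<1+\tfrac{2s}{d}$ is precisely what guarantees that this last input exponent can be taken in $(0,1)$, so that \autoref{lemma:algebraic-est} still applies. Tracking the (finitely many) constants and letting $\delta\searrow 0$ concludes.

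\emph{Main obstacle.} The technical heart is Step~1: correctly absorbing the nonlocal tail interactions into local energy quantities while verifying that \emph{no} tail term survives on the right-hand side; this rests entirely on the sign of the supersolution inequality and on the nonnegativity of $\phi=\eta^2\zeta^2\bar u^{-\eps}$. The remaining care is bookkeeping — the cut-off manipulations inside the double integral and the precise choice of exponents and radii so as to land exactly in the sharp range $q<1+\tfrac{2s}{d}$ — which is routine but must be done attentively.
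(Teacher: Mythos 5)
Your approach is essentially the paper's: the paper obtains the $L^\sigma\to L^1$ step by citing \cite[Theorem~4.2]{KaWe22}, and the gain-of-integrability step $\|u^p\|_{L^{1+2s/d}}\le c\|u^p\|_{L^1}$ for $p\in(0,1)$ by citing \cite[Proposition~3.6]{FeKa13} (which is proved exactly via the Caccioppoli estimate for $u^{1-\eps}$ plus the parabolic Sobolev embedding that you sketch in Step~1), then concludes by choosing $p=q/(1+\frac{2s}{d})$ and applying Jensen's inequality. Your proposal reconstructs these cited ingredients rather than invoking them, but the structure — Moser iteration in the small-exponent range with one extra gain step, made possible by $q<1+\frac{2s}{d}$, and the favorable sign of the tail term for nonnegative supersolutions — is the same.
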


\begin{proof}
First, by \cite[Theorem 4.2]{KaWe22}, there exists $c > 0$, depending only on $d $, $ s$, $\lambda$ and $\Lambda$, such that
\begin{align}
\label{eq:Moser-small-pos}
\int_{0}^{\frac{3}{4}}\int_{B_{\frac34}} u(t,x) \d x \d t \le c \left( \int_0^1 \int_{B_1} u^{\sigma}(t,x) \d x \d t \right)^{\frac1\sigma}.
\end{align}
Moreover, note that it holds for any $p \in (0,1)$
\begin{align*}
\Vert u^p \Vert_{L^{1+\frac{2s}{d}}((0,\frac{1}{2}) \times B_{\frac{1}{2}})} \le c\, \Vert u^p \Vert_{L^1((0,\frac{3}{4}) \times B_{\frac{3}{4}})},
\end{align*}
where $c > 0$ depends on $d$, $s$, $\lambda$, $\Lambda$, and $p$. 
This estimate is proved in \cite[Proposition 3.6]{FeKa13} (see also \cite[(4.9)]{KaWe22}) for $p \in (p^{\ast},\frac{d}{d+2s})$ for some $p^{\ast} \in (0,1)$ with a constant $c > 0$ that is independent of $p$. However, as it is mentioned in \cite[Remark after Proposition 3.6]{FeKa13}, it continues to hold true for arbitrary $p \in (0,1)$ with exactly the same proof, if the constant $c > 0$ is allowed to depend on $p$. As a consequence, we have
\begin{align*}
\bigg(\int_0^{\frac{1}{2}} \int_{B_{\frac12}} u^{p (1 + \frac{2s}{d} )} (t,x) \d x \d t \bigg)^{\frac1{p(1 + \frac{2s}{d})}} &\le c \bigg( \int_0^{\frac{3}{4}} \int_{B_{\frac34}} u^p (t,x) \d x \d t \bigg)^{\frac1p} \\
&\le c \int_{0}^{\frac{3}{4}}\int_{B_{\frac34}} u(t,x) \d x \d t \\
&\le c \bigg( \int_0^1 \int_{B_1} u^{\sigma}(t,x) \d x \d t \bigg)^{\frac1\sigma},
\end{align*}
where we also used Jensen's inequality and \eqref{eq:Moser-small-pos}.  Then, by choosing $p = q /(1 + \frac{2s}{d}) \in (0,1)$ we conclude the proof.
\end{proof}

\subsection{An energy decay estimate}
As mentioned in the introduction, the following energy decay estimate plays an important role in our theory and will be used repeatedly.
\begin{proposition}
\label{lemma:energy-decay}
Assume that the kernel $K$ satisfies the upper bound in \eqref{eq:Kcomp} and \eqref{eq:coercive}. Let $u$ be a nonnegative weak supersolution in $(0,4)\times B_4$ in the sense of \autoref{Def:local-sol} satisfying
\begin{align*}
\int_0^{2} \int_{B_2} u^q \d x \d t <\infty
\end{align*}
for some $q \in (1,1 + \frac{2s}{d})$. Then, there exists $c > 1$ depending on the data $\{d, s, \lambda, \Lambda\}$ and $q$, such that for any $\tau \in (0,1)$ it holds
\begin{align*}
\int_0^{\tau} \int_{B_{1}} \int_{B_{1}} |u(t,x) - u(t,y)| K(t;x,y)|x-y| \d y \d x \d t \le c \,\tau^{1-\frac{1}{q}} \bigg( \int_0^{2\tau} \int_{B_{2}} u^{q} \d x \d t \bigg)^{\frac{1}{q}}.
\end{align*}
\end{proposition}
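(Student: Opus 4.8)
The plan is to test the (super)solution inequality for $u$ with a cutoff-type function built from $u$ itself, in the spirit of a Caccioppoli estimate, but keeping the full double-integral energy term rather than absorbing it crudely. Concretely, I would use the Moser-type test function $\varphi = \eta^2 u^{-\eps}$ for a small $\eps \in (0,1)$ and a spatial cutoff $\eta$ supported in $B_2$ with $\eta \equiv 1$ on $B_1$; here one should first replace $u$ by $u+\delta$ (or $u \vee \delta$) to make negative powers legitimate, apply a time mollification as in the Remark after \autoref{Def:local-sol} so that $\partial_t u$ makes sense, and send $\delta \to 0$ at the end. Plugging $\varphi$ into the weak supersolution inequality and applying the algebraic estimate of \autoref{lemma:algebraic-est} to the nonlocal term, the main ``good'' term produced is
\begin{align*}
c_1 \int_0^{\tau}\int_{\R^d}\int_{\R^d} (\eta u(x)-\eta u(y))^2\bigl(u^{-\eps-1}(x)\wedge u^{-\eps-1}(y)\bigr)K(t;x,y)\d y\d x\d t,
\end{align*}
while the error terms are controlled by a cutoff-type tail contribution $\int\int (u^{-\eps+1}(x)\vee u^{-\eps+1}(y))(\eta(x)-\eta(y))^2 K$ plus the time-boundary term $\int_{B_2} u^{1-\eps}(\tau)\,\eta^2\d x$, both of which are bounded (using the upper bound in \eqref{eq:Kcomp}, the local integrability of $(\eta(x)-\eta(y))^2|x-y|^{-d-2s}$, and $L^q$ control of $u$) by $c\,\tau^{1-1/q}\bigl(\int_0^{2\tau}\int_{B_2}u^q\bigr)^{1/q}$ after Hölder in time.

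Next I would extract the first-order difference $|u(x)-u(y)|$ weighted by $|x-y|$ from the quadratic ``good'' term. The elementary pointwise inequality
\begin{align*}
|u(x)-u(y)|\,|x-y| \;\le\; \frac{(u(x)-u(y))^2}{u^{\eps+1}(x)\wedge u^{\eps+1}(y)}\cdot\bigl(u^{\eps+1}(x)\vee u^{\eps+1}(y)\bigr)^{1/2}|x-y|\;\cdot\;(\cdots)
\end{align*}
is not quite self-contained, so instead I would split by Cauchy--Schwarz: on $B_1\times B_1$,
\begin{align*}
&\int_{B_1}\int_{B_1}|u(x)-u(y)|K(t;x,y)|x-y|\d y\d x\\
&\qquad\le\Bigl(\int_{B_1}\int_{B_1}(\eta u(x)-\eta u(y))^2\bigl(u^{-\eps-1}(x)\wedge u^{-\eps-1}(y)\bigr)K\d y\d x\Bigr)^{1/2}\\
&\qquad\qquad\times\Bigl(\int_{B_1}\int_{B_1}\bigl(u^{\eps+1}(x)\vee u^{\eps+1}(y)\bigr)K(t;x,y)|x-y|^2\d y\d x\Bigr)^{1/2}.
\end{align*}
The second factor is, thanks to the upper bound $K\le\Lambda|x-y|^{-d-2s}$ and $\int_{B_1}\int_{B_1}|x-y|^{2-d-2s}<\infty$ (since $s<1$), dominated by $c\bigl(\int_{B_1}u^{\eps+1}\bigr)^{1/2}$, hence by $c\bigl(\int_{B_2}u^q\bigr)^{(\eps+1)/(2q)}$ once $\eps$ is chosen with $\eps+1\le q$. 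Integrating this Cauchy--Schwarz bound in time with Hölder (exponents tuned to $q$) and inserting the energy bound from the first step produces exactly the claimed right-hand side, with the factor $\tau^{1-1/q}$ coming from the time integration of the bounded-in-space quantities.

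The main obstacle I anticipate is the bookkeeping of the exponents so that everything closes with a single $q\in(1,1+\tfrac{2s}{d})$: one must simultaneously have $\eps+1\le q$ (to bound $\int u^{\eps+1}$ by $\int u^q$ via Hölder on a finite-measure set), $1-\eps>0$ (for the boundary term $\int u^{1-\eps}$), and the time-Hölder exponents arranged so the powers of $\tau$ add up to $\tau^{1-1/q}$; the reverse-Hölder inequality of \autoref{Lm:Lq-Lsig} should be invoked to upgrade any $L^{\sigma}$-type quantity with $\sigma<q$ to the $L^q$-norm on the slightly larger cylinder $(0,2\tau)\times B_2$, and a scaling/covering reduction to $\tau=1$ (replacing $u(t,x)$ by $u(\tau t,\tau^{1/2s}x)$, which is again a supersolution with a rescaled kernel satisfying the same bounds) keeps the geometry fixed. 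A secondary technical point is justifying the test function rigorously — the $\delta$-regularization, the time mollification, and the passage to the limit — but this is by now routine in the nonlocal parabolic literature (cf. \cite{FeKa13,KaWe22}) and I would only sketch it.
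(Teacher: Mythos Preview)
Your overall strategy --- test with $\eta^2(u+\delta)^{-\eps}$, extract the weighted quadratic energy via \autoref{lemma:algebraic-est}, and then recover $|u(x)-u(y)|\,|x-y|$ by Cauchy--Schwarz against the factor $\big(u^{\eps+1}(x)\vee u^{\eps+1}(y)\big)K|x-y|^2$ --- is exactly the paper's route. The paper simply fixes $\eps=(q-1)/2$ so that $2\eps+1=q$ and the exponents close by ordinary H\"older; neither the reverse H\"older inequality of \autoref{Lm:Lq-Lsig} nor a scaling reduction to $\tau=1$ is used.

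The gap is in your handling of the time-boundary term. With a purely \emph{spatial} cutoff $\eta$ and the testing carried out on $(0,\tau)$, the energy estimate leaves $\int_{B_2}\eta^2 u^{1-\eps}(\tau,x)\d x$ on the right-hand side, and your claim that it is bounded by $c\,\tau^{1-1/q}\big(\int_0^{2\tau}\int_{B_2}u^q\big)^{1/q}$ ``after H\"older in time'' cannot be justified: this term involves $u$ at the single instant $t=\tau$, carries no time integral to which H\"older could be applied, and has no smallness in $\tau$ at all. For a mere supersolution there is no mechanism to dominate $\int_{B_2}u^{1-\eps}(\tau,\cdot)$ by its time average on $(0,2\tau)$. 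The paper avoids this by choosing a \emph{space-time} cutoff $\eta$ supported in $[0,2\tau)\times B_{3/2}$, equal to $1$ on $[0,\tau)\times B_1$, with $|\partial_t\eta|\le 2/\tau$: then the boundary contribution at $t=2\tau$ vanishes, the one at $t=0$ has the favorable sign, and the new error $\int_0^{2\tau}\int_{B_2}|\partial_t\eta^2|\,u^{1-\eps}\le\tfrac{c}{\tau}\int_0^{2\tau}\int_{B_2}u^{1-\eps}$ is a genuine time integral which H\"older does control. With this single correction your Cauchy--Schwarz argument closes.
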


Prior to the proof of \autoref{lemma:energy-decay}, we need the following energy estimate.
\subsubsection{An energy estimate for local supersolutions}
\begin{lemma}\label{Lm:energy-eps}
   Assume that the kernel $K$ satisfies the upper bound in \eqref{eq:Kcomp} and \eqref{eq:coercive}. Let $u$ be a nonnegative weak supersolution in $I\times\Om$ in the sense of \autoref{Def:local-sol}. Suppose $(t_o,t_1)\times B_R\subset I\times\Om$ and $\varep\in (0,1)$. Then, there exists $c>1$ depending only on $\varep$, such that for every nonnegative, piecewise smooth cutoff function $\eta$ satisfying $\supp \eta(t,\cdot)\subset B_R$, $\forall\, t\in(t_o,t_1)$, we have
    \begin{align*}
        &\int_{B_R} \eta^2 u^{-\varep+1}(t_o,x) \d x-\int_{B_R} \eta^2 u^{-\varep+1}(t_1,x)\,\d x\\
        &\qquad+\int_{t_o}^{t_1} \int_{B_R} \int_{B_R} (\eta u(t,x) - \eta u(t,y))^2 (u^{-\eps-1}(t,x) \wedge u^{-\eps-1}(t,y)) K(t;x,y) \d y \d x \d t\\
        &\le c \int_{t_o}^{t_1} \int_{B_R} |\pl_t\eta^2| u^{-\eps+1}(t,x) \d x \d t \\
&\qquad + c \int_{t_o}^{t_1} \int_{B_{R}} \int_{\R^d \setminus B_R}  \eta^2 \tilde{u}^{-\eps+1}(t,x) K(t;x,y) \d y \d x \d t \\
& \qquad + c \int_{t_o}^{t_1} \int_{B_R} \int_{B_R} (\tilde{u}^{-\eps+1}(t,x) \vee \tilde{u}^{-\eps+1}(t,y))(\eta(t,x) - \eta(t,y))^2 K(t;x,y) \d y \d x \d t.
    \end{align*}
\end{lemma}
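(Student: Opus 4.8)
The plan is to derive this energy estimate by testing the weak supersolution formulation of $u$ with the function $\phi = \eta^2 u^{-\eps}$, appropriately regularized. Since $u$ is a nonnegative supersolution, strictly speaking one should first work with $u_\kappa = u + \kappa$ for $\kappa > 0$ to guarantee positivity and integrability of negative powers, and also use the time-mollified formulation from the Remark after \autoref{Def:global-sol} so that $\partial_t u_\kappa$ makes sense as an $L^2$-derivative; the estimates will be uniform in $\kappa$, and one lets $\kappa \searrow 0$ at the end via Fatou's lemma on the good (lower-bounded) energy term and dominated convergence on the right-hand side. The admissibility of $\eta^2 u_\kappa^{-\eps}$ as a test function (it has the required support in $B_R$ and the right Sobolev regularity, using that $u_\kappa$ is bounded below and, by local boundedness \autoref{Prop:bd}-type estimates, locally bounded above) is routine but should be noted.

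First I would handle the time-derivative term: testing gives $\int \partial_t u_\kappa \cdot \eta^2 u_\kappa^{-\eps}$, and since $\partial_t(u_\kappa^{-\eps+1}) = (-\eps+1) u_\kappa^{-\eps}\partial_t u_\kappa$, an integration by parts in $t$ over $(t_o,t_1)$ produces exactly the boundary terms $\int_{B_R}\eta^2 u_\kappa^{-\eps+1}\,dx$ evaluated at $t_o$ and $t_1$ (with the sign as stated, since $-\eps+1 > 0$), plus the term $\int\int |\partial_t \eta^2| u_\kappa^{-\eps+1}$ coming from differentiating the cutoff. Next I would treat the nonlocal bilinear term $\int_{t_o}^{t_1}\cE^{(t)}(u_\kappa(t), \eta^2 u_\kappa^{-\eps})\,dt$. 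Splitting $\R^d \times \R^d = (B_R \times B_R) \cup (B_R \times B_R^c) \cup (B_R^c \times B_R)$ and using symmetry of $K$, the diagonal part $B_R \times B_R$ is controlled from below using precisely \autoref{lemma:algebraic-est}: it yields $c_1 (\eta u_\kappa(x) - \eta u_\kappa(y))^2 (u_\kappa^{-\eps-1}(x)\wedge u_\kappa^{-\eps-1}(y))$ minus an error term involving $(u_\kappa^{-\eps+1}(x)\vee u_\kappa^{-\eps+1}(y))(\eta(x)-\eta(y))^2$, which gives the good energy term on the left and the third term on the right. For the off-diagonal part, since $\eta$ vanishes outside $B_R$, the integrand reduces to $u_\kappa^{-\eps}(x)\eta^2(x)(u_\kappa(x) - u_\kappa(y))K$; dropping the favorable contribution where $u_\kappa(x) \le u_\kappa(y)$ and bounding the rest by $\eta^2(x) u_\kappa^{-\eps+1}(x) K(t;x,y)$ produces the second (tail-type) term on the right, with $\tilde u$ denoting the relevant truncation/extension of $u$.

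The main obstacle I expect is the rigorous justification of the testing procedure and the passage $\kappa \searrow 0$: one must confirm that all terms are finite and that the cancellations in the time integration by parts are legitimate despite the lack of a genuine time-derivative — this is where the time-mollified supersolution concept is essential, and one should mollify, test, then remove the mollification using the $L^2_{\loc}$-regularity of $u$ and the lower semicontinuity of the energy. A secondary technical point is ensuring the off-diagonal integrals converge: near the diagonal inside $B_R$ this is the diagonal term already handled, while for $y$ far from $B_R$ one uses $K(t;x,y) \lesssim |x-y|^{-d-2s}$ together with $u \in L^1(I; L^1(\R^d;\mu))$ from the definition of weak supersolution, so the tail term is indeed finite. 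Everything else — the algebraic splitting, Young's inequality inside \autoref{lemma:algebraic-est}, and absorbing the $\delta$-error into the good term — is routine and I would not belabor it.
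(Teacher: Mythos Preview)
Your proposal is correct and follows essentially the same route as the paper's proof: regularize by setting $\tilde u = u + \delta$ (your $u_\kappa$), test the supersolution inequality with $\eta^2 \tilde u^{-\eps}$, integrate the time term by parts, split the bilinear form into the $B_R\times B_R$ piece (handled by \autoref{lemma:algebraic-est}) and the tail piece (drop the nonnegative contribution of $\tilde u(y)$), and pass $\delta\searrow 0$. One small clarification: the $\tilde u$ appearing on the right-hand side of the statement is simply $u+\delta$, not a truncation or extension.
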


\begin{proof}
Let $\tilde{u}:=u+\dl$ for some $\dl\in(0,1)$. Observe that $\tilde{u}$ satisfies the same equation as $u$.  We use $\phi = -\eta^2\tilde{u}^{-\eps} $ as a testing function in the weak formulation of $\tilde{u}$, modulo a time mollification, and arrive at
\begin{align*}
-\int_{t_o}^{t_1} \int_{B_R} \partial_t \tilde{u}(t,x) \eta^2 \tilde{u}^{-\eps}(t,x) \d x \d t + \int_{t_o}^{t_1} \cE^{(t)}(\tilde{u}(t),-\eta^2 \tilde{u}^{-\eps}(t)) \d t \le 0.
\end{align*}
For the first term, using that $  \partial_t \tilde{u} \cdot \eta^2 \tilde{u}^{-\eps} = \frac{1}{-\eps+1}\eta^2 \partial_t \tilde{u}^{-\eps+1} $, we perform integration by parts in time. Moreover, the second term, by the definition of $\mathcal{E}^{(t)}$, can be split into a local term and a nonlocal term. As a result, we obtain
\begin{align*}
\frac1{1-\varepsilon}&\int_{B_R}  \eta^2\tilde{u}^{-\varep+1}(t_o,x)\,\d x-\frac1{1-\varepsilon}\int_{B_R} \eta^2 \tilde{u}^{-\varep+1}(t_1,x)\,\d x\\
    &\quad+\int_{t_o}^{t_1}\int_{B_R} \int_{B_R} (\tilde{u}(t,y) - \tilde{u}(t,x))(\eta^2 \tilde{u}^{-\eps}(t,x) - \eta^2 \tilde{u}^{-\eps}(t,y)) K(t;x,y) \d y \d x \d t\\
    &\le -\frac1{1-\varepsilon}\int_{t_o}^{t_1} \int_{B_R} \tilde{u}^{-\eps+1}(t,x) \partial_t \eta^2(t,x) \d x \d t\\
    &\qquad-2\int_{t_o}^{t_1} \int_{B_{R}} \int_{\R^d \setminus B_R} (\tilde{u}(t,y) - \tilde{u}(t,x))\eta^2 \tilde{u}^{-\eps}(t,x) K(t;x,y) \d y \d x \d t .
\end{align*}
The last term can be estimate from above by
\[
2\int_{t_o}^{t_1} \int_{B_{R}} \int_{\R^d \setminus B_R}  \eta^2 \tilde{u}^{-\eps+1}(t,x) K(t;x,y) \d y \d x \d t
\]
after discarding the negative contribution of $\tilde{u}(t,y)$.
The triple integral on the left can be estimated from below by evoking \autoref{lemma:algebraic-est} for $(x,y) \in B_R \times B_R$. This yields
\begin{align*}
    \int_{t_o}^{t_1}&\int_{B_R} \int_{B_R} (\tilde{u}(t,y) - \tilde{u}(t,x))(\eta^2 \tilde{u}^{-\eps}(t,x) - \eta^2 \tilde{u}^{-\eps}(t,y)) K(t;x,y) \d y \d x \d t\\
    &\ge c_1 \int_{t_o}^{t_1} \int_{B_R} \int_{B_R} (\eta \tilde{u}(t,x) - \eta \tilde{u}(t,y))^2 (\tilde{u}^{-\eps-1}(t,x) \wedge \tilde{u}^{-\eps-1}(t,y)) K(t;x,y) \d y \d x \d t \\
    &\quad-c_2\int_{t_o}^{t_1} \int_{B_R} \int_{B_R} (\tilde{u}^{-\eps+1}(t,x) \vee \tilde{u}^{-\eps+1}(t,y))(\eta(t,x) - \eta(t,y))^2 K(t;x,y) \d y \d x \d t .
\end{align*}
Collecting these estimates and letting $\dl\to0$ we finish the proof.
\end{proof}

\subsubsection{Proof of \autoref{lemma:energy-decay}}
Let $\delta \in (0,1)$ and $\eps := \frac{q-1}{2}$, and denote $\tilde{u} = u+\delta$. First, by H\"older's inequality we estimate
\begin{align*}
\int_0^{\tau} & \int_{B_{1}} \int_{B_{1}} |u(t,x) - u(t,y)| K(t;x,y)|x-y| \d y \d x \d t \\
& = \int_0^{\tau} \int_{B_{1}} \int_{B_{1}} |\tilde{u}(t,x) - \tilde{u}(t,y)| K(t;x,y)|x-y| \d y \d x \d t \\
& \le \bigg(\int_0^{\tau} \int_{B_{1}} \int_{B_{1}} (\tilde{u}^{\eps+1}(t,x) \vee \tilde{u}^{\eps+1}(t,y)) K(t;x,y)|x-y|^{2} \d y \d x \d t \bigg)^{1/2}  \\
&\quad \times \bigg( \int_0^{\tau} \int_{B_{1}} \int_{B_{1}} |\tilde{u}(t,x) - \tilde{u}(t,y)|^2 (\tilde{u}^{-\eps-1}(t,x) \wedge \tilde{u}^{-\eps-1}(t,y)) K(t;x,y) \d y \d x \d t \bigg)^{1/2} \\
&=: I_1 \times I_2.
\end{align*}
For $I_1$, we compute by the upper bound on $K$ in \eqref{eq:Kcomp} and again by H\"older's inequality
\begin{align*}
I_1 &\le 2 \bigg( \int_0^{\tau} \int_{B_{1}} \tilde{u}^{\eps+1}(t,x) \bigg( \int_{B_{1}} K(t;x,y) |x-y|^{2} \d y \bigg) \d x \d t \bigg)^{1/2} \\
&\le 2 \bigg( \int_0^{\tau} \int_{B_{1}} \tilde{u}^{\eps+1}(t,x) \bigg( \int_{B_{2}(x)} |x-y|^{-d-2s+2} \d y \bigg) \d x \d t \bigg)^{1/2} \\
&\le c \bigg( \int_0^{\tau} \int_{B_{1}} \tilde{u}^{\eps+1}(t,x) \d x \d t \bigg)^{1/2}\\
&\le c \,\tau^{\frac{\eps}{2(2\eps+1)}}  \bigg( \int_0^{\tau} \int_{B_{1}} \tilde{u}^{2\eps+1}(t,x) \d x \d t \bigg)^{\frac{1+\eps}{2(2\eps+1)}}.
\end{align*}
In order to estimate $I_2$, we evoke \autoref{Lm:energy-eps} with $(t_o,t_1)\times B_R\equiv (0,2\tau)\times B_2$. The cutoff function $\eta$ is chosen to be supported in $[0,2\tau)\times B_{3/2}$ satisfying $\eta \equiv 1$ in $[0,\tau)\times B_1$, $|\partial_t \eta|\le 2/\tau$, and $|\nabla\eta|\le 2$. 
Consequently, using also that $u \ge 0$, we estimate
\begin{align*}
I_2^2 
&\le - \int_{B_2} \eta^2 \tilde{u}^{-\varep+1}(0,x) \d x + c \int_0^{2\tau} \int_{B_2} |\partial_t \eta^2| \tilde{u}^{-\eps+1}(t,x)  \d x \d t  \\
&\quad + c \int_0^{2\tau} \int_{B_{2}} \int_{  B^c_2} \eta^2 \tilde{u}^{-\eps+1}(t,x) K(t;x,y) \d y \d x \d t \\
&\quad + c \int_0^{2\tau} \int_{B_2} \int_{B_2} (\tilde{u}^{-\eps+1}(t,x) \vee \tilde{u}^{-\eps+1}(t,y))(\eta(t,x) - \eta(t,y))^2 K(t;x,y) \d y \d x \d t  \\
& \le \frac{c}{\tau} \int_0^{2\tau} \int_{B_2} \tilde{u}^{-\eps+1}(t,x) \d x \d t \\
&\quad + c \int_0^{2\tau} \int_{B_{3/2}} \eta^2 \tilde{u}^{-\eps+1}(t,x) \bigg( \int_{  B^c_2} K(t;x,y) \d y \bigg) \d x \d t \\
& \quad + c \int_0^{2\tau} \int_{B_{2}} \tilde{u}^{-\eps+1}(t,x) \bigg( \int_{ B_2} (\eta(t,x) - \eta(t,y))^2 K(t;x,y) \d y \bigg) \d x \d t \\
&\le \frac{c}{\tau} \int_0^{2\tau} \int_{B_2} \tilde{u}^{-\eps+1}(t,x) \d x \d t.
\end{align*}
We collect these estimates and apply H\"older's inequality again to obtain
\begin{align*}
\int_0^{\tau} & \int_{B_{1}} \int_{B_{1}} |u(t,x) - u(t,y)| K(t;x,y) |x-y| \d y \d x \d t\le I_1\times I_2 \\
&\le c\, \tau^{\frac{\eps}{2(2\eps+1)}}  \bigg( \int_0^{\tau} \int_{B_{1}} \tilde{u}^{2\eps+1}(t,x) \d x \d t \bigg)^{\frac{1+\eps}{2(2\eps+1)}} \bigg( \frac1\tau\int_0^{2\tau} \int_{B_2} \tilde{u}^{-\eps+1}(t,x) \d x \d t \bigg)^{\frac12} \\
&\le c\, \tau^{\frac{2\eps}{2\eps+1}} \bigg( \int_0^{2\tau} \int_{B_{2}} \tilde{u}^{2\eps+1}(t,x) \d x \d t \bigg)^{\frac{1}{2\eps+1}}.
\end{align*}
 Clearly, upon taking the limit $\delta \searrow 0$ and recalling $2\varepsilon+1=q$, we finish the proof.

\section{An auxiliary function and its properties}
\label{sec:Psi}

This section is devoted to an auxiliary function generated by solving the dual problem (see \autoref{Rmk:dual-prob}) with a particularly chosen initial datum. This function will be used in the proof of the time-insensitive Harnack estimates.
\begin{lemma}
\label{lemma:Psi-bounds}
Assume that the kernel $K$ satisfies the upper bound in \eqref{eq:Kcomp} and \eqref{eq:coercive}.
    Let $\Psi$ be the solution to the dual problem
\begin{align*}
    \begin{cases}
        \partial_t \Psi + \mathcal{L}_t \Psi &= 0 ~~ \text{ in } (0,T) \times \R^d,\\ 
        \displaystyle\Psi(T,\cdot) &= (1 + |\cdot|)^{-d-2s} ~~ \text{ in } \R^d.
    \end{cases} 
\end{align*}
Then, we have for any $t \in (0,T)$ and $x \in \R^d$ that
\begin{align}
\label{eq:Psi-bounds}
    \frac{c^{-1}}{(1 + |x|)^{d+2s}} \le \Psi(t,x) \le \frac{c}{(1 + |x|)^{d+2s}},
\end{align}
where $c > 1$ depends only on $d$, $s$, $\lambda$, $\Lambda$, and $T$.
\end{lemma}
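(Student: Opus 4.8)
\emph{Proof strategy.} The starting point is that $w:=(1+|\cdot|)^{-d-2s}$ belongs to $L^2(\R^d)$, since $2(d+2s)>d$; hence the dual Cauchy problem is well posed, and by the time reversal of \autoref{Rmk:dual-prob} the function $U(s,\cdot):=\Psi(T-s,\cdot)$ is the weak solution of the associated forward Cauchy problem with initial datum $w$. By \autoref{prop:representation} it has the representation
\begin{align*}
    \Psi(t,x)=U(T-t,x)=\int_{\R^d}p(T-t;x,y)\,w(y)\,\d y,
\end{align*}
where $p\ge0$ is the fundamental solution of the Cauchy problem solved by $U$; in particular $\Psi\ge0$. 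Under the upper bound in \eqref{eq:Kcomp}, $p$ satisfies the Nash-type estimate of \cite{KaWe23},
\begin{align*}
    p(\sigma;x,y)\le C\Big(\sigma^{-d/2s}\wedge\frac{\sigma}{|x-y|^{d+2s}}\Big)\le\frac{C\sigma}{(\sigma^{1/2s}+|x-y|)^{d+2s}},
\end{align*}
and is conservative, $\int_{\R^d}p(\sigma;x,y)\,\d y=1$ (by $\mathcal{L}_t1\equiv0$ and uniqueness among bounded solutions). These are the only properties of $p$ that will be used.

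\emph{Upper bound.} Writing $\sigma:=T-t\in(0,T]$, the two displays give $\Psi(t,x)\le C\,I(\sigma,x)$ with
\begin{align*}
    I(\sigma,x):=\sigma\int_{\R^d}\frac{w(y)}{(\sigma^{1/2s}+|x-y|)^{d+2s}}\,\d y,
\end{align*}
so it suffices to prove $I(\sigma,x)\le C(d,s,T)\,w(x)$ uniformly in $\sigma\le T$ and $x$. If $\sigma\ge1$, then $\sigma^{1/2s}+|x-y|\ge1+|x-y|$ and $\sigma\le T$, whence $I(\sigma,x)\le T\int_{\R^d}(1+|x-y|)^{-d-2s}w(y)\,\d y$; splitting the integral over $\{|y|<1\}$ (elementary) and $\{|y|\ge1\}$ (where $(1+|y|)^{-d-2s}\le|y|^{-d-2s}$, so \autoref{Lm:tail-weight-estimate} applies) bounds it by $C(d,s)\,w(x)$. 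If $\sigma<1$, split the $y$-integral at $|x-y|=\sigma^{1/2s}$: on $\{|x-y|<\sigma^{1/2s}\}$ bound the kernel by $\sigma^{-d/2s}$, note $w(y)\le C\,w(x)$ (as $\sigma^{1/2s}\le1$) and integrate over a ball of volume $\sim\sigma^{d/2s}$; on $\{\sigma^{1/2s}\le|x-y|\le1\}$ use again $w(y)\le C\,w(x)$, the kernel bound $\sigma|x-y|^{-d-2s}$ and $\sigma\int_{\sigma^{1/2s}\le|z|\le1}|z|^{-d-2s}\,\d z\le C(d,s)$; on $\{|x-y|>1\}$ reduce once more to \autoref{Lm:tail-weight-estimate}. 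In every case $I(\sigma,x)\le C(d,s,T)\,w(x)$, which is the upper estimate in \eqref{eq:Psi-bounds}.

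\emph{Lower bound.} Choose $R_\sigma:=(c_0\sigma)^{1/2s}$ with $c_0=c_0(d,s)$ so large that the upper bound on $p$ forces $\int_{B_{R_\sigma}(x)^c}p(\sigma;x,y)\,\d y\le\frac12$; conservativeness then gives $\int_{B_{R_\sigma}(x)}p(\sigma;x,y)\,\d y\ge\frac12$. Since $\sigma\le T$ we have $R_\sigma\le R_0:=(c_0T)^{1/2s}$, and $1+|y|\le(1+R_0)(1+|x|)$ for $y\in B_{R_0}(x)$ yields $w(y)\ge(1+R_0)^{-d-2s}w(x)$ there. Therefore
\begin{align*}
    \Psi(t,x)=U(\sigma,x)\ge\int_{B_{R_\sigma}(x)}p(\sigma;x,y)\,w(y)\,\d y\ge\tfrac12(1+R_0)^{-d-2s}\,w(x),
\end{align*}
which is the lower estimate, with $c$ depending on $d,s,\Lambda,T$. (The lower estimate can alternatively be deduced from the nonlocal weak Harnack / expansion-of-positivity estimates applied to $U$, using $1+|x-y|\le(1+|x|)(1+|y|)$ and $w>0$.)

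The elementary integral bound and the scaling dichotomy $\sigma\ge1$ versus $\sigma<1$ are routine; the substantive inputs are the representation formula for $U$ and the heat-kernel upper bound, which for \emph{time-dependent}, merely measurable coefficients and $L^2$ initial data are not standard and require the separate arguments of \autoref{prop:representation} and \cite{KaWe23}. The one point genuinely demanding care is the uniformity of all constants in $\sigma=T-t$ as $\sigma\to0^+$, where $p$ concentrates: this is exactly what the dichotomy above, and the bound $R_\sigma\le R_0$ in the lower estimate, take care of.
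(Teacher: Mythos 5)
Your upper bound argument is essentially the same as the paper's: both rely on the representation formula from \autoref{prop:representation}, the heat kernel upper bound from \cite{KaWe23}, and \autoref{Lm:tail-weight-estimate}; the only difference is how the integral is split (you combine the two pieces of the kernel bound into $C\sigma(\sigma^{1/2s}+|x-y|)^{-d-2s}$ and split at $|x-y|=\sigma^{1/2s}$ and $|x-y|=1$, whereas the paper splits first at $|x-y|=(T-t)^{1/2s}$ and then distinguishes cases on $T-t$ versus $1$). This part is fine.

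Your main lower bound argument, however, has a genuine gap. You invoke conservativeness, $\int_{\R^d}p(\sigma;x,y)\,\d y=1$, but the paper's \autoref{prop:representation} only establishes the one-sided inequality $\int_{\R^d}p_{\eta,t}(x,y)\,\d y\le1$. Your proposed justification --- ``$\mathcal{L}_t 1\equiv 0$ and uniqueness among bounded solutions'' --- is not covered by the appendix: the uniqueness proved there is for $L^2$ data and solutions in $L^\infty((\eta,T);L^2(\R^d))\cap L^2((\eta,T);H^s(\R^d))$, and the constant function $1$ lies in neither space, so one would need a separate Phragm\'en--Lindel\"of-type argument for uniqueness in the bounded class. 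Without $\int p=1$, the step ``$\int_{B_{R_\sigma}(x)^c}p\le\tfrac12$ hence $\int_{B_{R_\sigma}(x)}p\ge\tfrac12$'' collapses, since the total mass could a priori be small. Note that a matching heat kernel lower bound is not available here either, because only the upper bound in \eqref{eq:Kcomp} and the coercivity \eqref{eq:coercive} are assumed. The paper avoids this entirely: it proves the lower bound by a local expansion-of-positivity argument (\cite[Lemma 3.2]{Lia24c}), showing that if $\Phi(0,\cdot)\ge k$ on $B_2(x_o)$ then $\Phi\ge k/2$ on $[0,\nu_o]\times B_1(x_o)$ for some universal $\nu_o$, applied with $k=(3+|x_o|)^{-d-2s}$ and iterated finitely many times in $t$. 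This is exactly the alternative you mention parenthetically; it should be your primary argument, because it uses only local information and works under the stated hypotheses, whereas conservativeness would first need to be established.
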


\begin{proof}

Let us define $\Phi(t) = \Psi(T-t)$ and observe that $\Phi$ is a solution to \begin{align}
\label{eq:Phi-problem}
\begin{cases}
    \partial_t \Phi - \tilde{\mathcal{L}}_t \Phi &= 0 ~~ \text{ in } (0,T) \times \R^d, \\
    \Phi(0) &= (1 + |\cdot|)^{-d-2s} ~~ \text{ in } \R^d,
\end{cases}
\end{align}
where $\tilde{\mathcal{L}}_t = \mathcal{L}_{T-t}$.  By the existence of the heat kernel $p_{t}(x,y)\equiv p_{0,t}(x,y)$ associated to $\tilde{\mathcal{L}}_t$ (see Appendix \ref{sec:appendix}), and since $\Phi(0) \in L^2(\R^d)$, we can represent $\Phi$ as 
\begin{align*}
        \Phi(t,x) = \int_{\R^d} \frac{p_{t}(x,y)}{(1 + |y|)^{d+2s}}  \d y.
\end{align*}

Moreover, we have the following heat kernel upper bound (see \cite{KaWe23})
\begin{align}
\label{eq:heat-kernel-upper}
    p_t(x,y) \le C \left( \frac1{t^{\frac{d}{2s}}} \wedge \frac{t}{|x-y|^{d+2s}} \right),
\end{align}
where $C > 0$ depends only on $d$, $s$, $\lambda$, and $\Lambda$.

Let us show the upper bound of $\Psi$ in \eqref{eq:Psi-bounds}. First, by \autoref{Lm:tail-weight-estimate} we have
\begin{align}
\label{eq:tail-weight-estimate}
    \int_{  B^c_1} \frac1{(1 + |x-y|)^{d+2s}} \frac{ \d y}{|y|^{d+2s}} \le \frac{c(d,s)}{(1 + |x|)^{d+2s}} \qquad \forall \, x \in \R^d.
\end{align}
 Then, we use \eqref{eq:heat-kernel-upper} and split the integral into two parts and estimate
\begin{align*}
    \Psi(t,x) &\le C \int_{ \{|x-y|>(T-t)^{\frac1{2s}}\} } \frac{T-t}{|x-y|^{d+2s}} \frac{\d y}{(1 + |y|)^{d+2s}}\\
    &\qquad +  C \int_{ \{|x-y|\le (T-t)^{\frac1{2s}}\} } \frac1{(T-t)^{\frac{d}{2s}}} \frac{\d y}{(1 + |y|)^{d+2s}} \\
    &=: I_1 +  I_2,
\end{align*}
where $C=C(d,s,\lm, \Lm)$. For an estimate of $I_2$, we discuss two cases. In the case $(T-t)^{\frac{1}{2s}} \le \frac{|x|}{2}$ we have $(1 + |x| - (T-t)^{\frac{1}{2s}}) \ge \frac{1}{2} (1 + |x|)$, and hence
\begin{align*}
    I_2\le C \frac{1}{(T-t)^{\frac{d}{2s}}}\int_{ \{|x-y|\le (T-t)^{\frac1{2s}}\} } \frac{\d y}{(1 + |x| - (T-t)^{\frac{1}{2s}})^{d+2s}} \le \frac{c(d,s,\lambda,\Lambda)}{(1 + |x|)^{d+2s}};
\end{align*}
in the case $(T -t)^{\frac{1}{2s}} \ge \frac{|x|}{2}$, we have
\begin{align*}
I_2 \le  C \frac{1}{(T-t)^{\frac{d}{2s}}}\int_{ \{|x-y|\le (T-t)^{\frac1{2s}}\} } \d y \le c(d,s,\lambda,\Lambda) \le \frac{c(d,s,\lambda,\Lambda,T)}{(1+|x|)^{d+2s}}.
\end{align*} 

For $I_1$, we consider several cases. First, when $T-t \ge 1$, we can use \eqref{eq:tail-weight-estimate} to deduce
\begin{align*}
    I_1 &= C (T-t) \int_{\{|y|>(T-t)^{\frac1{2s}}\} }  \frac1{(1 + |x-y|)^{d+2s}} \frac{\d y}{|y|^{d+2s}} \\
    &\le  CT \int_{ B^c_{1}} \frac1{(1 + |x-y|)^{d+2s}} \frac{\d y}{|y|^{d+2s}}\\
    &\overset{\eqref{eq:tail-weight-estimate}}{\le} \frac{c}{ (1 + |x|)^{d+2s}}
\end{align*}
for some $c=c(d,s, \lm,\Lm, T)$. 
Moreover, when $T-t < 1$ and $|x| \ge 1$, we have by the change of variables $y \mapsto (T-t)^{-\frac{1}{2s}} y$ and an application of \eqref{eq:tail-weight-estimate}: 
\begin{align*}
    I_1 &= C (T-t) \int_{\{|y|>(T-t)^{\frac1{2s}}\} }  \frac1{(1 + |x-y|)^{d+2s}} \frac{\d y}{|y|^{d+2s}} \\
    &\le C (T-t) \int_{\{|y|>(T-t)^{\frac1{2s}}\}}  \frac1{((T-t)^{\frac{1}{2s}} + |x-y|)^{d+2s}} \frac{\d y}{|y|^{d+2s}} \\
    &= C (T-t)^{-\frac{d+2s}{2s}} \int_{ B^c_1}  \frac1{(1 + |(T-t)^{-\frac{1}{2s}}x-y|)^{d+2s} }\frac{\d y}{|y|^{d+2s}} \\
    &\overset{\eqref{eq:tail-weight-estimate}}{\le} c (T-t)^{-\frac{d+2s}{2s}} \frac1{(1 + (T-t)^{-\frac{1}{2s}}|x|)^{d+2s}}\\
    &=\frac{c}{((T-t)^{\frac{1}{2s}} + |x|)^{d+2s}}\\
    &\le \frac{c}{ |x|^{d+2s}}
\end{align*}
for some $c=c(d,s, \lm,\Lm)$.
When $T-t < 1$ and $|x| \le 1$, we compute
\begin{align*}
    I_1 &= C (T-t) \int_{\{|y|>(T-t)^{\frac1{2s}}\} }  \frac1{(1 + |x-y|)^{d+2s}} \frac{\d y}{|y|^{d+2s}} \\
    &\le C (T-t) \int_{\{|y|>(T-t)^{\frac1{2s}}\} } \frac1{|y|^{d+2s}}  \d y \le c(d,s, \lm,\Lm).
\end{align*}
Therefore, combining these estimates, we arrive at
\begin{align*}
    I_1 \le \frac{c}{(1 + |x|)^{d+2s}}
\end{align*}
for some $c=c(d,s, \lm,\Lm, T)$.
Hence, we have established also the upper bound in \eqref{eq:Psi-bounds}.

As for the lower bound in \eqref{eq:Psi-bounds}, it follows directly from local estimates. In fact, by \cite[Lemma 3.2]{Lia24c}, if $\Psi(0,\cdot)\ge k$ in some ball $B_{2}(x_o)$ for some $ k>0$, then there exists $\nu_o$ depending only on the data $\{d,s,\lm,\Lm\}$, such that $\Psi\ge\frac12 k$ in $[0,\nu_o]\times B_1(x_o)$. Recalling the assigned initial datum of $\Psi$, we can apply this property with $k=(3+|x_o|)^{-d-2s}$ and hence obtain that $u(t, x_o)\ge\frac12(3+|x_o|)^{-d-2s}$ for any $(t, x_o)\in[0,\nu_o]\times \R^d$. Then, the proof is concluded by repeating this argument finitely many times to propagate the estimate further in time.
\end{proof}

Another property of $\Psi$ is the following.
\begin{lemma} \label{Lm:u-Psi}
Assume that the kernel $K$ satisfies the upper bound in \eqref{eq:Kcomp} and \eqref{eq:coercive}.
Let $\Psi$ be as in \autoref{lemma:Psi-bounds} and let $t_o\in(0,T)$. Then, there exists $c>1$ depending on $d$, $s$, $\lm$, $\Lm$ and $T$, such that for any $x_o\in\R^d$ we have
    \begin{align*}
    \int_{t_o}^{T}  &\int_{B_1(x_o)} \int_{B_1(x_o)}  |\Psi(t,x) - \Psi(t,y)| K(t;x,y)|x-y| \d y \d x \d t\le \frac{c}{(1+ |x_o|)^{d+2s}}  .
\end{align*}
\end{lemma}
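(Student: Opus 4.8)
The plan is to reduce the estimate to the local energy inequality \autoref{Lm:energy-eps} combined with the pointwise two–sided bound for $\Psi$ from \autoref{lemma:Psi-bounds}. First I would reverse time: the function $\Phi(\tau,\cdot):=\Psi(T-\tau,\cdot)$ is a nonnegative (indeed, by \autoref{lemma:Psi-bounds}, strictly positive) weak solution on $(0,T)\times\R^d$ to $\partial_\tau\Phi-\widetilde{\mathcal{L}}_\tau\Phi=0$, where $\widetilde{\mathcal{L}}_\tau$ has kernel $\widetilde K(\tau;x,y)=K(T-\tau;x,y)$, which again satisfies the upper bound in \eqref{eq:Kcomp} and the coercivity condition \eqref{eq:coercive}. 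After the substitution $t=T-\tau$ the quantity to be estimated equals $\int_0^{T-t_o}\iint_{B_1(x_o)\times B_1(x_o)}|\Phi(\tau,x)-\Phi(\tau,y)|\,\widetilde K(\tau;x,y)\,|x-y|\d y\d x\d\tau$, and since the integrand is nonnegative it suffices to bound the same expression with $T-t_o$ replaced by $T$; this also makes the final constant independent of $t_o$.

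Second, fix $q\in(1,1+\tfrac{2s}{d})$, put $\eps:=\tfrac{q-1}{2}\in(0,1)$, and split via the weighted Cauchy--Schwarz inequality used in the proof of \autoref{lemma:energy-decay}: write the integrand as the product of $|\Phi(x)-\Phi(y)|(\Phi^{-\eps-1}(x)\wedge\Phi^{-\eps-1}(y))^{1/2}\widetilde K^{1/2}$ and $(\Phi^{\eps+1}(x)\vee\Phi^{\eps+1}(y))^{1/2}\widetilde K^{1/2}|x-y|$, the product of the two min/max factors being $1$. This gives $\le I_1I_2$, where $I_1^2=\int_0^T\iint_{B_1(x_o)^2}(\Phi^{\eps+1}(x)\vee\Phi^{\eps+1}(y))\widetilde K|x-y|^2$ and $I_2^2=\int_0^T\iint_{B_1(x_o)^2}|\Phi(x)-\Phi(y)|^2(\Phi^{-\eps-1}(x)\wedge\Phi^{-\eps-1}(y))\widetilde K$. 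For $I_1$, the upper bound on $\widetilde K$ and the finiteness of $\int_{B_2(x)}|x-y|^{2-d-2s}\d y$ (here one uses $s<1$) give $I_1^2\le c\int_0^T\int_{B_1(x_o)}\Phi^{\eps+1}$; then inserting $\Phi(\tau,\cdot)=\Psi(T-\tau,\cdot)\le c(1+|\cdot|)^{-d-2s}$ from \autoref{lemma:Psi-bounds} and the elementary bound $\int_{B_r(x_o)}(1+|x|)^{-\beta}\d x\le c(d,r,\beta)(1+|x_o|)^{-\beta}$ (for $\beta>0$) yields $I_1^2\le cT(1+|x_o|)^{-(\eps+1)(d+2s)}$.

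For $I_2$ the key point is to apply \autoref{Lm:energy-eps} to $\Phi$, translated so the ball is centered at $x_o$, on the cylinder $(0,T)\times B_2(x_o)$ — rigorously, on $(\delta,T-\delta)\times B_2(x_o)$ followed by $\delta\downarrow0$ — with a \emph{time-independent} cutoff $\eta=\eta(x)$ that equals $1$ on $B_1(x_o)$, is supported in $B_{3/2}(x_o)$, and satisfies $|\nabla\eta|\le2$. Then the $|\partial_\tau\eta^2|$-term is absent; the contribution of $\Phi$ at the initial time $\tau=\delta$ enters with a favorable sign and is discarded; and the contribution at $\tau=T-\delta$ is at most $\int_{B_{3/2}(x_o)}\Phi^{1-\eps}(T-\delta,\cdot)=\int_{B_{3/2}(x_o)}\Psi^{1-\eps}(\delta,\cdot)$, which by \autoref{lemma:Psi-bounds} (available since $\delta\in(0,T)$) and the ball estimate is $\le c(1+|x_o|)^{-(1-\eps)(d+2s)}$ uniformly in $\delta$. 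The remaining two terms — the nonlocal contribution over $B_2^c(x_o)$ and the cutoff-gradient contribution over $B_2(x_o)$ — are both controlled, again via the upper bound on $\widetilde K$ and $s<1$, by $c\int_0^T\int_{B_2(x_o)}\Phi^{1-\eps}\le cT(1+|x_o|)^{-(1-\eps)(d+2s)}$. Since $\eta\equiv1$ on $B_1(x_o)$ and all integrands are nonnegative, we conclude (letting $\delta\downarrow0$) that $I_2^2\le c(1+T)(1+|x_o|)^{-(1-\eps)(d+2s)}$.

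Combining the two estimates and using that the exponents add to exactly $1$, $\int_0^T\iint_{B_1(x_o)^2}|\Phi(x)-\Phi(y)|\widetilde K|x-y|\le I_1I_2\le c(1+|x_o|)^{-\frac{(\eps+1)+(1-\eps)}{2}(d+2s)}=\frac{c}{(1+|x_o|)^{d+2s}}$ with $c=c(d,s,\lm,\Lm,T)$, and reversing time gives the claim. The main obstacle I anticipate is exactly the one handled above: running the energy estimate all the way up to the terminal time $T$ with a constant independent of $t_o$. A time-\emph{dependent} cutoff (as in the proof of \autoref{lemma:energy-decay}) would either need room beyond $T$ — which would force one to first extend $\Phi$ past $T$, possible since $\Psi(0,\cdot)\le c(1+|\cdot|)^{-d-2s}\in L^2(\R^d)$ — or a covering of $(0,T)$ whose cardinality depends on $t_o$; the time-independent cutoff avoids both, the only price being the harmless factor $T$ in the spatial terms.
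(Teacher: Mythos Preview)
Your proof is correct, and it reaches the same conclusion by a slightly different route than the paper. The paper applies the packaged energy decay estimate (\autoref{lemma:energy-decay}) on short time intervals $((j-1)t_o,jt_o)$ --- which, because of the time-dependent cutoff in that proposition, needs room up to $2jt_o$ --- and then sums over $j=1,\dots,T/t_o-1$ to cover $(0,T-t_o)$. You instead go back to the underlying energy estimate \autoref{Lm:energy-eps} and run it on the full interval with a \emph{time-independent} spatial cutoff; this kills the $|\partial_t\eta^2|$ term and leaves a final-time boundary term $\int\eta^2\Phi^{1-\eps}(T-\delta,\cdot)$, which you control directly via \autoref{lemma:Psi-bounds}. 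Your approach is a bit more direct and avoids the iteration; the paper's approach stays at the level of the black-box \autoref{lemma:energy-decay} and only uses the upper bound on $\Psi$ once at the end. Both yield a constant depending on $T$ but not on $t_o$.
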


\begin{proof} 
    Set $\Phi(t,x):=\Psi(T-t, x)$ and recall that it is a solution to the equation \eqref{eq:Phi-problem}, which is governed by the operator $\tilde{\mathcal{L}}_t = \mathcal{L}_{T-t}$ with kernel $\tilde{K}(t;x,y) := K(T-t;x,y)$. Take $q=1+\frac{s}{d}$ and apply \autoref{lemma:energy-decay} to get
    \begin{align*}
\int_0^{t_o} \int_{B_{1}(x_o)} \int_{B_{1}(x_o)} &|\Phi(t,x) - \Phi(t,y)| \tilde{K}(t;x,y)|x-y|  \d y \d x \d t \\
&\le c\, t_o^{1-\frac{1}{q}} \bigg( \int_0^{2 t_o} \int_{B_{2}(x_o)} \Phi^{q}(t,x) \d x \d t \bigg)^{\frac{1}{q}}
\end{align*}
for some $c$ depending on $d$, $s$, $\lambda$, $\Lambda$ and $q$. Now, apply the upper bound from \autoref{lemma:Psi-bounds} (recalling the definition of $\Phi$) to estimate the right-hand side and obtain
    \begin{align*}
\int_0^{t_o} \int_{B_{1}(x_o)} \int_{B_{1}(x_o)} |\Phi(t,x) - \Phi(t,y)| \tilde{K}(t;x,y)|x-y|  \d y \d x \d t \le \frac{c\, t_o}{(1+ |x_o|)^{d+2s}}
\end{align*}
for some $c$ depending on $d$, $s$, $\lambda$, $\Lambda$ and $T$. 
The same estimate can be repeated in $(t_o,2t_o)$ and so on. Namely, for $j\in\N$, we have
  \begin{align*}
\int_{(j-1)t_o}^{jt_o} \int_{B_{1}(x_o)} \int_{B_{1}(x_o)} |\Phi(t,x) - \Phi(t,y)| \tilde{K}(t;x,y)|x-y|  \d y \d x \d t \le \frac{c\, t_o}{(1+ |x_o|)^{d+2s}}
\end{align*}
as long as $(j+1)t_o\le T$.
Without loss of generality, we may assume $T/t_o$ is an integer. Then, adding the above estimate from 
$1$ to $T/t_o -1$ we obtain
    \begin{align*}
\int_0^{T- t_o} \int_{B_{1}(x_o)} \int_{B_{1}(x_o)} |\Phi(t,x) - \Phi(t,y)| \tilde{K}(t;x,y)|x-y|  \d y \d x \d t \le \frac{c\, T}{(1+ |x_o|)^{d+2s}}.
\end{align*}
Finally, reverting to $\Psi$ we conclude the proof.
\end{proof}

\section{Time-insensitive Harnack estimates}
\label{sec:time-insensitive}

In this section we prove the  time-insensitive Harnack estimates as stated in \autoref{Thm:0} and \autoref{Thm:1}.
There are two main ingredients in our approach.
One is the parabolic nonlocal Harnack inequality for {\it local} solutions established in \cite{KaWe24c}. The {\it globality} is exploited in the second ingredient -- the following weighted $L^1$ estimate, which is of independent interest.

\subsection{Time-insensitive $L^1 (\R^d;\mu)$ estimate}

Throughout this section we assume that $K$ satisfies the upper bound in \eqref{eq:Kcomp} and \eqref{eq:coercive}.

The following result asserts the weighted norm of a solution is comparable for each time instant.

\begin{proposition}
\label{lemma:weighted-L1-time-insensitive}
Assume that the kernel $K$ satisfies the upper bound in \eqref{eq:Kcomp} and \eqref{eq:coercive}.
    Let $u$ be a nonnegative, global weak solution in $(0,T]\times \R^d$ in the sense of \autoref{Def:global-sol}. Consider $t,\tau \in (0,T)$. Then, there exists $c > 1$, depending only on $d$, $s$, $\lambda$, $\Lambda$, and $T$, such that
    \begin{align*}
       c^{-1} \Vert u(\tau) \Vert_{L^1(\R^d;\mu)} \le \Vert u(t) \Vert_{L^1(\R^d;\mu)} \le c\, \Vert u(\tau) \Vert_{L^1(\R^d;\mu)}.
    \end{align*}
\end{proposition}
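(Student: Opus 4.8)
The plan is to bring in the auxiliary function $\Psi$ from \autoref{lemma:Psi-bounds}, namely the weak solution of the dual problem on $(0,T)\times\R^d$ with terminal datum $\Psi(T,\cdot)=(1+|\cdot|)^{-d-2s}$, and to prove that the map
\begin{align*}
t\longmapsto \int_{\R^d}u(t,x)\,\Psi(t,x)\,\d x \qquad\text{is constant on }(0,T).
\end{align*}
By the two-sided bounds \eqref{eq:Psi-bounds} and since $u(t)\in L^1(\R^d;\mu)$, this pairing is finite and comparable to $\Vert u(t)\Vert_{L^1(\R^d;\mu)}$ uniformly in $t$, with constants depending only on $d,s,\lm,\Lm,T$; thus its constancy immediately gives $c^{-1}\Vert u(\tau)\Vert_{L^1(\R^d;\mu)}\le\Vert u(t)\Vert_{L^1(\R^d;\mu)}\le c\,\Vert u(\tau)\Vert_{L^1(\R^d;\mu)}$.

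To establish the constancy, I would fix $R>0$ and a purely spatial cutoff $\eta=\eta_R$ with $\eta\equiv1$ on $B_{3R/2}$, $\supp\eta\subset B_{2R}$, $0\le\eta\le1$, and $|\nabla\eta|\le c/R$, and test the time-mollified weak formulation of the equation for $u$ with $\eta^2\Psi(t,\cdot)$, and the one for $\Psi$ with $\eta^2u(t,\cdot)$; both are admissible spatial test functions of compact support. Adding the two identities --- bearing in mind that the nonlocal term of $\Psi$ enters with the opposite sign, since $\Psi$ solves the \emph{dual} equation --- the parabolic terms combine into $\int_{\R^d}\eta^2\,\partial_t(u\Psi)\,\d x$, while by the elementary identity
\begin{align*}
(a-b)\big(\eta^2(x)p-\eta^2(y)q\big)-(p-q)\big(\eta^2(x)a-\eta^2(y)b\big)=\big(\eta^2(x)-\eta^2(y)\big)(aq-bp),
\end{align*}
applied with $a=u(x)$, $b=u(y)$, $p=\Psi(x)$, $q=\Psi(y)$, the nonlocal terms collapse to the single error term
\begin{align*}
\mathcal{R}_R(t_1,t_2):=\int_{t_1}^{t_2}\int_{\R^d}\int_{\R^d}\big(\eta^2(x)-\eta^2(y)\big)\big(u(t,x)\Psi(t,y)-u(t,y)\Psi(t,x)\big)K(t;x,y)\,\d y\,\d x\,\d t.
\end{align*}
Integrating in time therefore yields $\int_{\R^d}\eta^2u\Psi\,\d x\big|_{t_1}^{t_2}=-\mathcal{R}_R(t_1,t_2)$; since $u,\Psi\ge0$ and $u\Psi\in L^1(\R^d)$, sending $R\to\infty$ (dominated convergence on the left) reduces everything to showing $\mathcal{R}_R(t_1,t_2)\to0$, first for a.e.\ $t_1<t_2$ and then, by continuity of solutions, for all $t_1<t_2$.

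The main obstacle is precisely this limit $\mathcal{R}_R\to0$, which is where the local estimates of Section~\ref{sec:local} and the properties of $\Psi$ from Section~\ref{sec:Psi} enter. Since $\eta^2(x)\ne\eta^2(y)$ forces $\max(|x|,|y|)\gtrsim R$, I would split $\mathcal{R}_R$ into a \emph{far} part, where $\{x,y\}\cap B_R\ne\emptyset$ but $|x-y|\gtrsim R$, and a \emph{near} part, where $x$ and $y$ both lie in the shell $\{R-1<|\cdot|<2R+1\}$. For the far part one uses $K(t;x,y)\le\Lm|x-y|^{-d-2s}\lesssim R^{-d-2s}$, the upper bound $\Psi\lesssim(1+|\cdot|)^{-d-2s}$, the crude estimate $|u(x)\Psi(y)-u(y)\Psi(x)|\le u(x)\Psi(y)+u(y)\Psi(x)$, the elementary bound $\int_{B_R}u(t,x)\,\d x\lesssim R^{d+2s}\Vert u(t)\Vert_{L^1(\R^d;\mu)}$, and the vanishing of the tail $\int_{t_1}^{t_2}\int_{\{|x|>R\}}u\,\d\mu\,\d t\to0$; this makes the far part $o(1)$ as $R\to\infty$. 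For the near part one covers the shell by $\cO(R^d)$ unit balls; on each $B_1(x_o)$ with $|x_o|\sim R$, writing $u(x)\Psi(y)-u(y)\Psi(x)=\Psi(y)(u(x)-u(y))+u(y)(\Psi(y)-\Psi(x))$ and using $|\eta^2(x)-\eta^2(y)|\lesssim R^{-1}|x-y|$ together with $\Psi\lesssim R^{-d-2s}$ on $B_1(x_o)$, the contribution is controlled by a multiple of
\begin{align*}
\frac{1}{R^{1+d+2s}}\int_{t_1}^{t_2}\int_{B_1(x_o)}\int_{B_1(x_o)}|u(t,x)-u(t,y)|\,K(t;x,y)\,|x-y|\,\d y\,\d x\,\d t+\frac{\sup_{B_1(x_o)}u}{R}\int_{t_1}^{t_2}\int_{B_1(x_o)}\int_{B_1(x_o)}|\Psi(t,x)-\Psi(t,y)|\,K(t;x,y)\,|x-y|\,\d y\,\d x\,\d t.
\end{align*}
The first triple integral is handled by the energy decay estimate \autoref{lemma:energy-decay}, and its right-hand side (a local $L^q$ norm of $u$) as well as $\sup_{B_1(x_o)}u$ are bounded in terms of $\Vert u\Vert_{L^1(\R^d;\mu)}$ by the reverse Hölder estimate \autoref{Lm:Lq-Lsig} and the boundedness estimate \autoref{Prop:bd:2}; the second is of order $R^{-d-2s}$ per ball by \autoref{Lm:u-Psi}. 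Summing over the $\cO(R^d)$ balls and exploiting the extra factor $R^{-1}$ from $\nabla\eta$, the $R^{-d-2s}$-decay of $\Psi$ at scale $R$, and the vanishing tail of $u$ against $\mu$, the near part is also $o(1)$, which would complete the argument. The remaining points are routine: the admissibility of the test functions $\eta^2\Psi,\eta^2u\in L^2((0,T);H^s(\R^d))$ with compact spatial support, the validity of integration by parts in time after mollification, and the passage from a.e.\ to all $t_1,t_2$ via continuity of $u$ and $\Psi$.
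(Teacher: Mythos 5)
Your strategy is the same as the paper's: introduce the dual solution $\Psi$, prove that $t\mapsto\int_{\R^d}u(t)\Psi(t)\,\d x$ is constant by testing the equations for $u$ and $\Psi$ against $\eta^2\Psi$ and $\eta^2 u$ respectively, combine via an algebraic identity, let $R\to\infty$, and conclude from the two-sided bounds on $\Psi$ in \autoref{lemma:Psi-bounds}. Your commutator identity, which yields the single error term with integrand $(\eta^2(x)-\eta^2(y))(u(x)\Psi(y)-u(y)\Psi(x))$, is a tidy repackaging of the paper's \eqref{eq:algebra-chain-rule}; since $2(u(x)\Psi(y)-u(y)\Psi(x))=(u(x)-u(y))(\Psi(x)+\Psi(y))-(u(x)+u(y))(\Psi(x)-\Psi(y))$, your $\mathcal{R}_R$ is a fixed linear combination of the paper's two error terms $\mathbf{I}$ and $\mathbf{II}$, and proving $\mathcal{R}_R\to0$ requires precisely the same estimates.

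The genuine gap is in your near-part estimate. Covering the shell by $\cO(R^d)$ unit balls and, on each $B_1(x_o)$, using the split $u(x)\Psi(y)-u(y)\Psi(x)=\Psi(y)(u(x)-u(y))+u(y)(\Psi(y)-\Psi(x))$ together with \autoref{lemma:energy-decay} and \autoref{Lm:u-Psi} only controls pairs $(x,y)$ in the \emph{same} unit ball, i.e.\ with $|x-y|\lesssim 1$. But your near part (both $x,y$ in the shell) also contains pairs $x\in B_1(x_o)$, $y\in B_1(y_o)$ with $4\le|x_o-y_o|\lesssim R$; for these, neither the difference trick nor \autoref{lemma:energy-decay} is usable, and their contribution cannot be dropped. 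These cross-ball terms are in fact where most of the work in the paper's \autoref{lemma:I1} and \autoref{lemma:I2} goes — the sums $J_1$, $J_2$, $J_{2,1}$, $J_{2,2}$, the covering estimate \eqref{Eq:aug-est-1} relying on the gain $1\wedge R^{-1}|x_o-y_o|$, and the tail computation \eqref{Eq:J22+}. They must be bounded directly via $|u(x)\Psi(y)|+|u(y)\Psi(x)|$ and $K\lesssim|x_o-y_o|^{-d-2s}$, then summed over the lattice; in your shell-restricted setting this is somewhat simpler than the paper's general case because $(1+|x_o|)^{-d-2s}\sim(1+|y_o|)^{-d-2s}\sim R^{-d-2s}$, but the estimate must still be carried out. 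A secondary imprecision is that your far/near dichotomy is not exhaustive on $\supp(\eta^2(x)-\eta^2(y))$: for instance $|x|\in(3R/2,2R)$ and $|y|>2R+1$ lies in neither part. The paper avoids this by decomposing $(B_R^c\times B_R^c)^c$ and peeling off explicit tail pieces $\mathbf{I}_2,\dots,\mathbf{I}_5$ and $\mathbf{II}_2,\dots,\mathbf{II}_5$ before applying the covering.
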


\begin{remark}\label{Rmk:L1-weight}
This result resembles \cite[Lemma~4.2]{BSV17}. However, the proof in \cite{BSV17} relies on an explicit calculation of $(-\Dl)^s\Phi$, where $\Phi(x)$ decays like $|x|^{-d-2s}$, while, in general, $\mathcal{L}_t \Phi$ defined in \eqref{eq:op} could diverge (even if $\Phi \in C_c^{\infty}(\R^d)$) due to the lack of regularity of the kernel $K$. This subtlety was overlooked in \cite[Lemma 2.7]{Str19}.  
\end{remark}

\begin{proof}

Let $\Psi$ be the solution to the dual problem
\begin{align*}
    \begin{cases}
        \partial_t \Psi + \mathcal{L}_t \Psi &= 0 ~~ \text{ in } (0,T) \times \R^d,\\
        \Psi(T,\cdot) &= (1 + |\cdot|)^{-d-2s} ~~ \text{ in } \R^d.
    \end{cases} 
\end{align*}
Recall that the two-sided decay of $\Psi$ has been shown in \autoref{lemma:Psi-bounds}.

Next, let $\xi_R \in C_c^{1}(B_{2R})$ be such that $0 \le \xi_R \le 1$, $\xi_R \equiv 1$ in $B_R$, and $|\nabla \xi_R| \le 4 R^{-1}$. Moreover, we fix $t_o\in(0,T)$ and consider arbitrary $\tau_o,\,\tau_1$ satisfying $t_o \le \tau_o < \tau_1 < T$. We introduce a sequence of functions $\zeta_k\in C_c^{1}(0,T)$, $k\in\N$ such that
\begin{equation*}
\left\{
    \begin{array}{cc}
        \zeta_k \to \chi_{[\tau_o, \tau_1]},  \quad
         \partial_t \zeta_k \to \delta_{\tau_o} - \delta_{\tau_1}\quad \text{as}\>\> k\to\infty , \\ [5pt]
         \displaystyle\supp(\zeta_k) \subset \Big[\frac{t_o+\tau_o}{2}, \frac{T +\tau_1}{2}\Big].
    \end{array}\right.
\end{equation*}

Modulo a proper time mollification, we observe 
\begin{align*}
        -\int_0^{T} \partial_t \zeta_k(t) \int_{\R^d} \xi_R(x) u(t,x) \Psi(t,x) \d x \d t  &=   \int_0^{T} \zeta_k(t) \int_{\R^d} \xi_R(x) \partial_t u(t,x) \Psi(t,x) \d x \d t\\
    &\quad + \int_0^{T} \zeta_k(t) \int_{\R^d} \xi_R(x) \partial_t \Psi(t,x) u(t,x)  \d x \d t .
\end{align*}
To estimate the right-hand side, we test the weak formulation of $u$ with $\zeta_k\xi_R \Psi$ and the weak formulation of $\Psi$ with $\zeta_k\xi_R u$ and we obtain
\begin{align*}
    \int_0^{T} \zeta_k(t) \int_{\R^d} \xi_R(x) \partial_t u(t,x) \Psi(t,x) \d x \d t&=-\int_0^{T} \zeta_k(t)   \cE^{(t)}(u(t) , \xi_R\Psi(t)) \d t,\\
    \int_0^{T} \zeta_k(t) \int_{\R^d} \xi_R(x) \partial_t \Psi(t,x) u(t,x)  \d x \d t&=\int_0^{T} \zeta_k(t)   \cE^{(t)}(\Psi(t) , \xi_R u(t) )  \d t.
\end{align*}
To continue we plug these two identities back into the previous equation, recall the definition of $\cE^{(t)}$, and employ the following algebraic identity
    \begin{align}
    \label{eq:algebra-chain-rule}\nonumber
        (\phi_1 - \phi_2)(u_1 \xi_1 - u_2 \xi_2) &= \frac{1}{2}(\phi_1 - \phi_2) (u_1 - u_2)(\xi_1 + \xi_2) + \frac{1}{2}(\phi_1 - \phi_2) (u_1 + u_2)(\xi_1 - \xi_2) \\ \nonumber
        &= (u_1 - u_2)(\phi_1 \xi_1 - \phi_2 \xi_2) - \frac{1}{2}(u_1 - u_2)(\xi_1 - \xi_2)(\phi_1 + \phi_2) \\ 
        &\quad+ \frac{1}{2}(\phi_1 - \phi_2)(u_1 + u_2)(\xi_1 - \xi_2).
    \end{align}
As a result, we arrive at
\begin{align*}
     \bigg| \int_0^{T} &\partial_t \zeta_k(t) \int_{\R^d} \xi_R(x) u(t,x) \Psi(t,x) \d x \d t \bigg| \\
    & \le \frac{1}{2}\bigg| \int_0^{T} \zeta_k(t) \bigg( \int_{\R^d} \int_{\R^d} (u(t,x) - u(t,y))(\Psi(t,x) + \Psi(t,y)) \\
    &\qquad\qquad\qquad\qquad\qquad\qquad\cdot (\xi_R(x) - \xi_R(y)) K(t;x,y) \d y \d x \bigg) \d t \bigg| \\
    &\quad \quad + \frac{1}{2} \bigg| \int_0^{T} \zeta_k(t) \bigg( \int_{\R^d} \int_{\R^d} (u(t,x) + u(t,y))(\Psi(t,x) - \Psi(t,y))\\
    &\qquad\qquad\qquad\qquad\qquad\qquad\cdot(\xi_R(x) - \xi_R(y)) K(t;x,y) \d y \d x \bigg)\d t \bigg| \\
    &=: \textbf{I} + \textbf{II}.
\end{align*}

Let us treat the above two terms separately. 
For $\mathbf{I}$, we estimate using the symmetry of the integrand, the properties of $\xi_R,\,\z_k$ and the upper bound for $\Psi$ from \autoref{lemma:Psi-bounds}:

\begin{align*}
    \mathbf{I} \le c \int_0^T \iint_{(B_R^c\times B_R^c)^c} \frac{|u(t,x) - u(t,y)|}{(1 + |x|)^{d+2s}}   (1 \wedge R^{-1}|x-y|) K(t;x,y)  \d y \d x \d t =: c\,\mathbf{I}_R.
\end{align*}

For $\mathbf{II}$, we estimate by using the symmetry of the integrand and the properties of $\xi_R,\eta_k$:
\begin{align*}
    \mathbf{II} &\le c\int_{t_o}^{T}  \iint_{(B_R^c\times B_R^c)^c} u(t,x) |\Psi(\tau,x) - \Psi(t,y)| (1 \wedge R^{-1}|x-y|)K(t;x,y) \d y \d x   \d t\\
    &=: c\,\mathbf{II}_R.
\end{align*}

Hence, combining the estimates for $\mathbf{I}$, $\mathbf{II}$
and taking the limit $k \to \infty$, we deduce
\begin{align*}
    \bigg| \int_{\R^d} u(\tau_o , x) \xi_R(x) \Psi(\tau_o,x) \d x - \int_{\R^d} u(\tau_1 , x) \xi_R(x) \Psi(\tau_1,x) \d x  \bigg| \le c \, \mathbf{I}_R + c\, \mathbf{II}_R
\end{align*}
for any $R>1$ and for any $\tau_o,\,\tau_1$ satisfying $t_o \le \tau_o < \tau_1 < T$.
We will show in \autoref{lemma:I1} and \autoref{lemma:I2} that $\mathbf{I}_R,\, \mathbf{II}_R \to 0$, as $R \to \infty$.
Hence, letting $R \to \infty$ in the last estimate, we obtain
\begin{align*}
    \int_{\R^d} u(\tau_o , x) \Psi(\tau_o,x) \d x = \int_{\R^d} u(\tau_1 , x)  \Psi(\tau_1,x) \d x.
\end{align*}

The proof is complete upon applying the pointwise upper and lower estimate on $\Psi$ established in \autoref{lemma:Psi-bounds}.
\end{proof}

It remains to state and prove \autoref{lemma:I1} and \autoref{lemma:I2}, which were employed during the proof of \autoref{lemma:weighted-L1-time-insensitive}.

\begin{lemma}
    \label{lemma:I1}
    Assume that the kernel $K$ satisfies the upper bound in \eqref{eq:Kcomp} and \eqref{eq:coercive}.
Let $u$ be a nonnegative, global weak solution to \eqref{Eq:1:1} in $(0,T) \times \R^d$. Define
\begin{align*}
    \mathbf{I}_R := \int_0^T \iint_{(B_R^c\times B_R^c)^c} \frac{|u(t,x) - u(t,y)|}{(1 + |x|)^{d+2s}}   (1 \wedge R^{-1}|x-y|) K(t;x,y) \d y \d x \d t.
\end{align*}
    Then, we have $\displaystyle \lim_{R \to \infty} \mathbf{I}_R = 0$.
\end{lemma}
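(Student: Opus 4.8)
The plan is to split the domain of integration $(B_R^c \times B_R^c)^c$ into a ``diagonal'' region where $|x-y|$ is small and a ``far'' region where $|x-y| \gtrsim R$, and estimate each separately using the energy decay estimate of \autoref{lemma:energy-decay} together with the global integrability $u \in L^1((0,T);L^1(\R^d;\mu))$. First I would observe that the cutoff factor $(1 \wedge R^{-1}|x-y|)$ forces $|x-y| \le R$ to contribute with weight $\le R^{-1}|x-y|$, and that the region $(B_R^c\times B_R^c)^c = (B_R\times\R^d) \cup (\R^d \times B_R)$, so after symmetrizing we may assume $x \in B_R$. The key is that the weight $(1+|x|)^{-d-2s}$ is bounded, and for $x \in B_R \setminus B_{R/2}$ it is comparable to $R^{-d-2s}$, which will supply the decay.

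For the region $|x - y| \le 1$ (say), I would cover $B_R$ by $O(R^d)$ unit balls $B_1(x_i)$, $x_i \in B_R$, and on each such ball apply \autoref{lemma:energy-decay} with $q = 1 + \frac{s}{d}$ (after a time rescaling to normalize the cylinder to $(0,4)\times B_4$; note $u$ is a supersolution hence the lemma applies), obtaining on $(0,T)\times B_1(x_i)\times B_1(x_i)$ a bound of the form $c\,T^{1-1/q}\big(\int_0^{2T}\int_{B_2(x_i)} u^q\big)^{1/q}$. Multiplying by $\sup_{B_1(x_i)}(1+|x|)^{-d-2s}$ and summing over $i$, I would use that $\sum_i \mathbf{1}_{B_2(x_i)}$ has bounded overlap and H\"older's inequality in the form $\sum_i a_i^{1/q} \le (\sum_i a_i)^{1/q} (\#)^{1-1/q}$; the upshot is that the contribution of the unit balls near the origin is finite (independent of $R$) while, crucially, the contribution of balls $B_1(x_i)$ with $|x_i| \sim \rho \in [1,R]$ carries the factor $\rho^{-d-2s}\cdot \rho^{d(1-1/q)}$ from the number of such balls, and $\sum_{\rho} \rho^{-d-2s + d - d/q}$ converges — and more importantly, the tail of this sum (over $\rho \ge R_0$) tends to $0$ as $R_0 \to \infty$, uniformly in $R$. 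Since the finitely many balls near the origin contribute a fixed amount that is then killed by... no: here I must be careful — this piece does \emph{not} go to zero by itself. The resolution is that the full integral $\mathbf{I}_R$ restricted to $|x-y|\le 1$ equals a \emph{fixed} convergent series minus its tail over $|x|\ge R$; but that fixed series is not what we want to vanish. Rethinking: the honest statement is that $\mathbf{I}_R$ with $|x-y|\le 1$ is bounded by $\sum_{|x_i| \le R}$ of the above, and this is $\le \sum_{\text{all }i}$, a convergent quantity — but to get $\to 0$ we instead note that for $x\in B_R$ we in fact only get a contribution when the \emph{other} point $y\in B_R^c$, i.e. $|x|$ is within distance $1$ of $\partial B_R$, so $|x|\ge R-1$ and the weight is $\lesssim R^{-d-2s}$; summing over the $O(R^{d-1})$ relevant boundary balls gives $R^{-d-2s}\cdot R^{(d-1)(1-1/q)}\cdot(\cdots)^{1/q} \to 0$. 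This localization to a boundary shell (because $(1\wedge R^{-1}|x-y|)$ with both $|x-y|$ small and $\{x,y\}\not\subset B_R^c$ forces one point just outside $B_R$) is the crux.

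For the region $|x-y| \ge 1$: here $K(t;x,y)|x-y| \le \Lambda |x-y|^{-d-2s+1}$ and $(1\wedge R^{-1}|x-y|) \le 1$, and I would bound $|u(t,x) - u(t,y)| \le u(t,x) + u(t,y)$, reducing to $\int_0^T \int_{B_R} \frac{u(t,x)}{(1+|x|)^{d+2s}} \big(\int_{|x-y|\ge 1} |x-y|^{-d-2s+1}\,dy\big)\,dx\,dt$ plus a symmetric term; the inner integral is a finite constant $c(d,s)$, so this piece is $\le c\int_0^T\int_{B_R} \frac{u(t,x)}{(1+|x|)^{d+2s}}\,dx\,dt + (\text{sym})$. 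To see this vanishes as $R\to\infty$ I again use that one of the two points must lie in $B_R^c$: after symmetrization we are left with $\int_0^T \int_{B_R^c} u(t,y)\big(\int_{\R^d}\frac{(1+|x-y|)\wedge 1 \text{-type weight}}{(1+|x|)^{d+2s}|x-y|^{d+2s-1}}\,dx\big)\,dy$, and by \autoref{Lm:tail-weight-estimate} (or a direct computation of the same flavor) the inner integral is $\lesssim (1+|y|)^{-d-2s+\varepsilon}$-summable against $u$, so the whole thing is a tail $\int_0^T\int_{B_R^c} u(t,y)\,d\mu\text{-like} \to 0$ by dominated convergence, since $u \in L^1((0,T);L^1(\R^d;\mu))$.

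The main obstacle I anticipate is bookkeeping the covering argument in the small-$|x-y|$ regime so that the energy-decay bound, which is local and therefore must be summed over $O(R^{d-1})$ shell balls, is beaten by the weight $R^{-d-2s}$ — this requires the exponent arithmetic $-(d+2s) + (d-1)(1-\tfrac1q) < 0$, which holds for $q$ close to $1$ (indeed for any $q < 1+\tfrac{2s}{d}$ since $(d-1)(1-1/q) < d+2s$ comfortably), and checking that the constant from \autoref{lemma:energy-decay}, which depends on the rescaling $T$, stays uniform across the shell balls (it does, since all balls have the same radius and the rescaled time horizon is the same). A secondary subtlety is justifying the time-mollification and the use of the supersolution form of \autoref{lemma:energy-decay} for a function $u$ that is merely a global \emph{solution}; this is immediate since solutions are in particular supersolutions.
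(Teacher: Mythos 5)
Your argument has a genuine gap that defeats the closing step in both of your regimes. You correctly observe at the outset that $(B_R^c\times B_R^c)^c = (B_R\times\R^d)\cup(\R^d\times B_R)$, i.e.\ \emph{at least one} of $x,y$ lies in $B_R$; in particular this region contains all of $B_R\times B_R$, where both points may sit near the origin. But at the crucial moment (and twice) you invoke the claim that ``for $x\in B_R$ we in fact only get a contribution when the other point $y\in B_R^c$,'' which would confine the integration to a shell of thickness $\sim 1$ around $\partial B_R$ and make the weight $(1+|x|)^{-d-2s}\lesssim R^{-d-2s}$. That is false for $\mathbf{I}_R$ as stated: the factor $1\wedge R^{-1}|x-y|$ is strictly positive everywhere off the diagonal, and nothing restricts the integrand to a boundary annulus. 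You appear to be importing the support restriction of $\nabla\xi_R$ from the derivation of $\mathbf{I}_R$, but that restriction was already discarded when $|\xi_R(x)-\xi_R(y)|$ was replaced by the majorant $1\wedge R^{-1}|x-y|$. As a consequence, both your series estimate for $|x-y|\le 1$ and your closing step for $|x-y|\ge 1$ retain a contribution from the interior $B_R\times B_R$ that is bounded by a constant times $\Vert u\Vert_{L^1((0,T);L^1(\R^d;\mu))}$ \emph{uniformly in $R$}, which does not vanish.

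The correct decay for the interior block $B_R\times B_R$ comes from the factor $R^{-1}|x-y|$ itself, not from the weight. On each covering ball $B_1(x_o)\times B_1(x_o)$ one has $|x-y|\le 2\le R$, so $1\wedge R^{-1}|x-y|=R^{-1}|x-y|$, and the contribution is $\tfrac1R\int\!\!\int\!\!\int |u(t,x)-u(t,y)|\,K(t;x,y)\,|x-y|$. This is controlled by \autoref{lemma:energy-decay} \emph{combined with the reverse H\"older estimate of \autoref{Lm:Lq-Lsig}} (with $\sigma=1$) to pass from $u^q$ back to $u$, giving $\le \frac{c}{R}\int_0^{2T}\int_{B_{10}(x_o)} u\,\d x\,\d t$; multiplying by the locally comparable weight $(1+|x|)^{-d-2s}$ and summing over a half-integer lattice with bounded overlap yields $\lesssim R^{-1}\Vert u\Vert_{L^1((0,2T);L^1(\R^d;\mu))}\to 0$. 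Your sketch omits the $u^q\to u$ conversion of \autoref{Lm:Lq-Lsig}, which is essential: the a priori integrability of $u$ is only $L^1$ against $\mu$, so the sum $\sum_i \big(\int u^q\big)^{1/q}$ you produce cannot be controlled globally without it. The remaining far-off-diagonal block ($|x_o-y_o|$ large) likewise relies on the $1\wedge R^{-1}|x_o-y_o|$ factor (and a secondary parameter $K$, plus dominated convergence using $u\in L^1((0,T);L^1(\R^d;\mu))$), not on any shell restriction.
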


\begin{proof}
For ease of notation we will consider $\mathbf{I}_{R/2}$ and split the domain of integration as $ (B_{R}\times B_{R}) \cup (B_{R/2}\times B^c_{R})\cup (B^c_{R} \times B_{R/2}) \supset (B_{R/2}^c\times B_{R/2}^c)^c$.
As a result, we estimate
\begin{align*}
    \mathbf{I}_{R/2} &\le \int_0^T \int_{B_R} \frac{1}{(1 + |x|)^{d+2s}} \int_{B_R} |u(t,x) - u(t,y)| (1 \wedge R^{-1}|x-y|) K(t;x,y)  \d y \d x \d t \\
    & \quad + \int_0^T \int_{B_{R/2}} \frac{u(t,x)}{(1 + |x|)^{d+2s}} \int_{B_R^c}  \frac{1 \wedge R^{-1}|x-y|}{|x-y|^{d+2s}}  \d y \d x \d t \\
    & \quad + \int_0^T \int_{B_{R/2}} \frac{1}{(1 + |x|)^{d+2s}} \int_{B_R^c} u(t,y) \frac{1 \wedge R^{-1}|x-y|}{|x-y|^{d+2s}}  \d y \d x \d t \\
    &\quad + \int_0^T \int_{B_R^c} \frac{u(t,x)}{(1 + |x|)^{d+2s}} \int_{B_{R/2}}  \frac{1 \wedge R^{-1}|x-y|}{|x-y|^{d+2s}}  \d y \d x \d t \\
     &\quad + \int_0^T \int_{B_R^c} \frac{1}{(1 + |x|)^{d+2s}} \int_{B_{R/2}} u(t,y) \frac{1 \wedge R^{-1}|x-y|}{|x-y|^{d+2s}}  \d y \d x \d t \\
    &=: \textbf{I}_1 + \mathbf{I}_{2} + \mathbf{I}_{3} + \mathbf{I}_{4} + \mathbf{I}_{5}.
\end{align*}
While the treatment of $\mathbf{I}_{2} $, $\mathbf{I}_{3} $, $ \mathbf{I}_{4} $ and $ \mathbf{I}_{5}$ is straightforward, estimating $\mathbf{I_1}$ requires significant effort. Let us treat $\mathbf{I}_{1}$ at the end of the proof. For $\mathbf{I}_{2}$, we obtain from the upper bound in \eqref{eq:Kcomp}
\begin{align*}
        \mathbf{I}_{2} &\le \int_0^T \int_{B_{R/2}} \frac{u(t,x)}{(1+|x|)^{d+2s}} \bigg(\int_{B_{R}^c} \frac{\d y}{|x-y|^{d+2s}}  \bigg) \d x \d t \le \frac{c}{R^{2s}} \|u\|_{L^1((0,T);L^1(\R^d;\mu))}
    \end{align*}
for some $c=c(d,s)$.
 For $\mathbf{I}_{3}$, we use that when $x \in B_{R/2}$ and $y \in B_R^c$, then $|y| \le |x-y| + |x| \le |x-y| + \frac{R}{2} \le 2 |x-y|$ and thus
    \begin{align*}
        \mathbf{I}_{3} &\le c\, \mu(\R^d) \int_0^T \int_{B_{R}^c}\frac{u(t,y)}{|y|^{d+2s}} \d y \d t 
    \end{align*}
for some $c=c(d)$.
 For $\mathbf{I}_{4}$, we observe that $B_{R/2}\subset B^c_{R/2}(x)$ when $x\in B_R^c$ and estimate
    \begin{align*}
        \mathbf{I}_{4} &\le \int_0^T \int_{B_{R}^c} \frac{u(t,x)}{(1+|x|)^{d+2s}}  \bigg( \int_{B^c_{R/2}(x)} \frac{\d y}{|x-y|^{d+2s}} \bigg) \d x \d t \le \frac{c}{R^{2s}} \|u\|_{L^1((0,T);L^1(\R^d;\mu))}
    \end{align*}
for some $c=c(d,s)$.
 Finally, for $\mathbf{I}_{5}$, we observe
    \begin{align*}
        \mathbf{I}_{5} &\le c \bigg(\int_{B_{R}^c} \frac{\d x}{|x|^{d+2s}} \bigg) \int_0^T \int_{B_{R/2}} \frac{u(t,y)}{ R^{d+2s}} \d y \d t \le \frac{c}{R^{2s}} \|u\|_{L^1((0,T);L^1(\R^d;\mu))}
    \end{align*}
for some $c=c(d,s)$.
    Therefore, $\mathbf{I}_{j}$, $j=2,3,4,5$ all tend to $0$ as $R\to\infty$, thanks to the assumption that $u \in L^1((0,T);L^1(\R^d;\mu))$.

    Let us now turn to the estimate for $\textbf{I}_1$.

    First, observe that by \autoref{lemma:energy-decay} and \autoref{Lm:Lq-Lsig} with $\sig=1$, we have for any $x_o \in \R^d$:
\begin{align*}
    \frac{1}{R} \int_0^T &\int_{B_5(x_o)} \int_{B_5(x_o)} |u(t,x) - u(t,y)| K(t;x,y) |x-y| \d y \d x \d t \le \frac{c}{R} \int_0^{2T} \int_{B_{10}(x_o)} u(t,x) \d x \d t
\end{align*}
for some $c=c(d,s,\lm,\Lm,T) > 0$.
In particular, this implies
\begin{align}
\label{eq:on-diag-1}
\begin{split}
   \frac{1}{R} \int_0^T \int_{B_5(x_o)} \frac1{(1 + |x|)^{d+2s}}  \int_{B_5(x_o)}& |u(t,x) - u(t,y)| K(t;x,y) |x-y| \d y \d x \d t \\
    &\le \frac{c}{R} \int_0^{2T} \int_{B_{10}(x_o)} \frac{u(t,x)}{ (1+ |x|)^{d+2s} }\d x \d t,
\end{split}
\end{align}
for some $c=c(d,s,\lm,\Lm,T) > 0$,
since for any $x\in B_{10}(x_o)$ we have $\frac1{11}(1+|x|) \le (1+|x_o|)\le 11(1+|x|) $. Here, the extension of time interval to $(0,2T)$ is for ease of notation.

In order to conclude the proof, let us perform a covering argument. 
Indeed, consider the countable set
\begin{equation*}
    \mathcal{S}:=\Big\{\frac12(n_1,\dots , n_d):\,  n_i\in\Z,\,i=1,\dots, d\Big\}.
\end{equation*}
For $R > 100$ define $\mathcal{S}_R:=\mathcal{S}\cap B_R$ to be a finite subset with cardinality $\#\mathcal{S}_R\le (2R)^d$. It is not hard to see that for some $C=C(d)>0$, it holds
\begin{align}\label{Eq:cover-balls}
    B_{R} \subset \bigcup_{x_o \in \mathcal{S}_R} B_1(x_o), \qquad \sum_{x_o \in \mathcal{S}_R} \chi_{B_{10}(x_o)} \le C \chi_{B_{20R}}.
\end{align}
Using the first property of \eqref{Eq:cover-balls} we obtain
\begin{align*}
    \textbf{I}_1 &\le \sum_{x_o,\, y_o\in\mathcal{S}_R } \int_0^T\int_{B_{1}(x_o)} \frac1{(1 + |x|)^{d+2s}}\int_{B_{1}(y_o)} |u(t,x) - u(t,y)| (1 \wedge R^{-1}|x-y|) K(t;x,y) \d y \d x \d t .
\end{align*}
To proceed, let us split the summation over $x_o,\,y_o\in\mathcal{S}_R$ by considering \textbf{two cases}, that is, $|x_o-y_o|<4$ and $|x_o-y_o|\ge4$. Let us deal with the \textbf{first case}. Indeed, when $|x_o-y_o|<4$, we observe that $B_1(y_o)\subset B_5(x_o)$ and  estimate by enlarging the integration domains:
\begin{align}\label{Eq:case:1}\nonumber
&\sum_{\substack{x_o,\, y_o\in\mathcal{S}_R\\ |x_o-y_o|<4}} \int_0^T\int_{B_{1}(x_o)} \frac1{(1 + |x|)^{d+2s}}\int_{B_{1}(y_o)} |u(t,x) - u(t,y)| (1 \wedge R^{-1}|x-y|) K(t;x,y) \d y \d x \d t \\ \nonumber
&\quad\le \frac{8^d}R\sum_{x_o\in\mathcal{S}_R}\int_0^T\int_{B_{5}(x_o)} \frac1{(1 + |x|)^{d+2s}}\int_{B_{5}(x_o)}  |u(t,x) - u(t,y)| K(t;x,y) |x-y| \d y \d x \d t\\ \nonumber
&\quad\le \frac{c}{R}\sum_{x_o\in\mathcal{S}_R}\int_0^{2T}\int_{B_{10}(x_o)} \frac{u(t,x)}{(1 + |x|)^{d+2s}} \d x \d t\\ 
&\quad\le \frac{c}{R} \|u\|_{L^1((0,2T);L^1(\R^d;\mu))}
\end{align}
for some $c=c(d,s,\lm, \Lm, T)>0$.
Here, in the second-to-last estimate we used \eqref{eq:on-diag-1} and in the last estimate we employed the second property of \eqref{Eq:cover-balls}.

To handle the \textbf{second case}, 
 let us consider $x_o,\, y_o \in \R^d$ satisfying $|x_o - y_o| \ge 4$. For any $x\in B_1(x_o)$ and $y\in B_1(y_o)$, we observe that $\frac23|x-y|\le |x_o-y_o|\le 2|x-y|$ and $\frac12(1+|y|)\le 1+|y_o|\le 2(1+|y|)$. Hence, we  estimate
\begin{align}\label{J1-J2}\nonumber
     \int_0^T \int_{B_1(x_o)}& \frac1{(1 + |x|)^{d+2s}} \int_{B_1(y_o)} |u(t,x) - u(t,y)| (1 \wedge R^{-1}|x-y|) K(t;x,y) \d y \d x \d t \\ \nonumber
    &  \le \int_0^T \int_{B_1(x_o)} \frac{u(t,x)}{(1 + |x|)^{d+2s}}  \bigg( \int_{B_1(y_o)} \frac{1 \wedge R^{-1} |x-y|}{|x-y|^{d+2s}} \d y \bigg) \d x \d t \\ \nonumber
    &  \quad + \int_0^T \int_{B_1(x_o)} \frac1{(1 + |x|)^{d+2s}} \int_{B_1(y_o)} u(t,y)  \frac{1 \wedge R^{-1} |x-y|}{|x-y|^{d+2s}} \d y \d x \d t \\ \nonumber
    &  \le c  \frac{1 \wedge R^{-1}|x_o - y_o| }{|x_o - y_o|^{d+2s}} \int_0^T \int_{B_1(x_o)} \frac{ u(t,x)}{(1 + |x|)^{d+2s}} \d x \d t \\ \nonumber
    &  \quad + c \bigg(\frac{1+|y_o|}{1 + |x_o|} \bigg)^{d+2s}  \frac{1 \wedge R^{-1}|x_o - y_o| }{|x_o - y_o|^{d+2s}}\int_0^T\int_{B_1(y_o)} \frac{u(t,y)}{(1 + |y|)^{d+2s}} \d y \d t \\
    &  =: J_1(x_o,y_o) + J_2(x_o,y_o)
\end{align}
for some $c=c(d)$.
Therefore, taking \eqref{J1-J2} into account, the second case leads to
\begin{align}\label{Eq:case:2} \nonumber
\sum_{\substack{x_o,\, y_o\in\mathcal{S}_R\\ |x_o-y_o|\ge4}} &\int_0^T\int_{B_{1}(x_o)} \frac1{(1 + |x|)^{d+2s}}\int_{B_{1}(y_o)} |u(t,x) - u(t,y)| (1 \wedge R^{-1}|x-y|) K(t;x,y) \d y \d x \d t \\
   & \le\sum_{\substack{x_o,\, y_o\in\mathcal{S}_R\\ |x_o-y_o|\ge4}} J_1(x_o,y_o) + \sum_{\substack{x_o,\, y_o\in\mathcal{S}_R\\ |x_o-y_o|\ge4}} J_2(x_o,y_o) =: J_1 + J_2.
\end{align}
Let us consider $J_1$ and $J_2$, separately. First for $J_1$, we observe that
when $|x_o-y_o|\ge 4$, it holds $\frac34|x_o-y_o|\le |x_o-y|\le\frac54|x_o-y_o|$ for any $y\in B_1(y_o)$, and moreover, we have $B_1(y_o)\subset\R^d\setminus B_1(x_o)$. Hence, we estimate
 for fixed $x_o\in\mathcal{S}_R$ that
\begin{align}\nonumber
    \sum_{\substack{y_o \in \mathcal{S}_{R}\\ |x_o - y_o| \ge 4}} & \frac{ 1 \wedge R^{-1}|x_o - y_o|}{|x_o - y_o|^{d+2s}} \le c \sum_{\substack{y_o \in \mathcal{S}_{R}\\ |x_o - y_o| \ge 4}} \int_{B_1(y_o)} \frac{ 1 \wedge R^{-1}|x_o - y|}{|x_o - y|^{d+2s}} \d y \\ \nonumber
    &\le c\int_{\R^d \setminus B_1(x_o)} \bigg(\sum_{y_o \in \mathcal{S}_{R}} \chi_{B_1(y_o)} \bigg) \frac{ 1 \wedge R^{-1}|x_o - y|}{|x_o - y|^{d+2s}} \d y\\ \nonumber
    &\overset{\eqref{Eq:cover-balls}}{\le} c\int_{\R^d \setminus B_1(x_o)} \frac{ 1 \wedge R^{-1}|x_o - y|}{|x_o - y|^{d+2s}} \d y \\ \nonumber
    &\le \frac{c}{R} \int_{B_R(x_o) \setminus B_1(x_o)} \frac{\d y}{|x_o - y|^{d+2s-1}}  + c \int_{\R^d \setminus B_R(x_o)} \frac{\d y}{|x_o - y|^{d+2s}}  \\ \label{Eq:aug-est-1}
    &\le c \frac{\mathfrak{C}(R,s)}R + \frac{c}{R^{2s}}
\end{align}
for some $c=c(d)$, where we introduced
\begin{equation}\label{Eq:C(R,s)}
    \mathfrak{C}(R,s):=\left\{
    \begin{array}{cl}
        \displaystyle\frac{R^{1-2s}}{1-2s} \quad&\text{if}\>\> s\in(0,\tfrac12),  \\[10pt]
         \ln R \quad&\text{if}\>\> s=\tfrac12, \\[5pt]
         \displaystyle\frac1{2s-1} \quad& \text{if}\>\>s\in(\tfrac12,1).
    \end{array}\right.
\end{equation}
Therefore, we estimate $J_1$ as
\begin{align}\label{Eq:J1}\nonumber
J_1 &=
    \sum_{x_o \in \mathcal{S}_{R}} \int_0^T \int_{B_1(x_o)}  \frac{u(t,x)}{(1 + |x|)^{d+2s}} \d x \d t \Bigg( \sum_{\substack{y_o \in \mathcal{S}_{R}\\ |x_o - y_o| \ge 4}} \frac{1 \wedge R^{-1}|x_o - y_o|}{|x_o - y_o|^{d+2s}} \Bigg) \\ \nonumber
    &\le c \bigg(\frac{\mathfrak{C}(R,s)}R + \frac{1}{R^{2s}} \bigg) \sum_{x_o \in \mathcal{S}_{R}} \int_0^T \int_{B_1(x_o)} \frac{u(t,x)}{(1 + |x|)^{d+2s}} \d x \d t \\
    &\le c \bigg(\frac{\mathfrak{C}(R,s)}R + \frac{1}{R^{2s}}\bigg) \Vert u \Vert_{L^1((0,T); L^1(\R^d;\mu))}  
\end{align}
for some $c=c(d)$.
 Here, we employed the second property of \eqref{Eq:cover-balls} to get the last estimate.
 
For $J_2$, we estimate $(1 + |y_o|) \le (1 + |x_o|) + |x_o - y_o|$ and obtain
\begin{align*}
 J_2(x_o,y_o) &\le c\frac{1 \wedge R^{-1}|x_o - y_o|}{|x_o - y_o|^{d+2s}}  \int_0^T\int_{B_1(y_o)} \frac{u(t,y)}{(1 + |y|)^{d+2s}} \d y \d t \\
    &\quad + c\frac{1 \wedge R^{-1} |x_o - y_o|}{(1 + |x_o|)^{d+2s}}  \int_0^T\int_{B_1(y_o)} \frac{u(t,y)}{(1 + |y|)^{d+2s}} \d y \d t \\
    &=: J_{2,1}(x_o,y_o) + J_{2,2}(x_o,y_o).
\end{align*}
Note that the estimate for the sum over $J_{2,1}$ proceeds in the exact same way as the estimate for the sum over $J_1$, up to exchanging the roles of $x_o$ and $y_o$. After all, we obtain
\begin{align}\label{Eq:J21}
    \sum_{\substack{x_o,\, y_o\in\mathcal{S}_R\\ |x_o-y_o|\ge4}} J_{2,1}(x_o,y_o) \le c(d)\bigg(\frac{\mathfrak{C}(R,s)}R + \frac{1}{R^{2s}}\bigg) \Vert u \Vert_{L^1((0,T); L^1(\R^d;\mu))}  .
\end{align}
Hence, it remains to estimate
\begin{align}\label{Eq:J22}\nonumber
    \sum_{\substack{x_o,\, y_o\in\mathcal{S}_R\\ |x_o-y_o|\ge4}} & J_{2,2}(x_o,y_o) \\
    &=  \sum_{y_o \in \mathcal{S}_{R}} \int_0^T\int_{B_1(y_o)} \frac{u(t,y)}{ (1 + |y|)^{d+2s}} \d y \d t \Bigg(\sum_{\substack{x_o \in \mathcal{S}_R \\ |x_o - y_o| \ge 4}} \frac{1 \wedge R^{-1} |x_o - y_o| }{(1 + |x_o|)^{d+2s}} \Bigg) .
\end{align}
In order for that, let $K > 100$ to be chosen later, and take $R > 2K$. We consider two complimentary possibilities, i.e. $y_o\in\mathcal{S}_K$ and $y_o\in\mathcal{S}_R\setminus\mathcal{S}_K$. In the first possibility, let us fix $y_o\in\mathcal{S}_K$ and observe that, when $|x_o-y_o|\ge4$ we have $\frac34|x_o-y_o|\le |x-y_o|\le\frac54|x_o-y_o|$ for any $x\in B_1(x_o)$. As a result, we estimate
\begin{align*}
     \sum_{\substack{x_o\in\mathcal{S}_R\\ |x_o-y_o|\ge4}} \frac{1 \wedge R^{-1} |x_o - y_o| }{(1 + |x_o|)^{d+2s}} & \le c(d)\sum_{x_o \in \mathcal{S}_{R}}  \int_{B_1(x_o)}\frac{1 \wedge R^{-1}|x - y_o|}{ (1 + |x|)^{d+2s}} \d x\\
    &\overset{\eqref{Eq:cover-balls}}{\le} c(d) \int_{\R^d} \frac{1 \wedge R^{-1}|x - y_o|}{ (1 + |x|)^{d+2s}} \d x .
\end{align*}
To continue estimating the integral in the above display, we split the domain of integration into $\R^d = B_{R} \cup B_R^c$, use the triangle inequality $|x - y_o| \le |x| + |y_o|$, and obtain
\begin{align*}
\sum_{\substack{x_o\in\mathcal{S}_R\\ |x_o-y_o|\ge4}} &\frac{1 \wedge R^{-1} |x_o - y_o| }{(1 + |x_o|)^{d+2s}}\le c \int_{B_R} \frac{ 1 \wedge R^{-1}|x - y_o|}{ (1 + |x|)^{d+2s}} \d x+c \int_{B_R^c} \frac{ 1 \wedge R^{-1}|x - y_o|}{ (1 + |x|)^{d+2s}} \d x\\
&\le c \int_{B_R} \frac{ R^{-1}|x - y_o|}{ (1 + |x|)^{d+2s}} \d x+c \int_{B_R^c} \frac{ \d x }{ (1 + |x|)^{d+2s}} \\
        & \le \frac{c}{R} \int_{B_{R}} \frac{\d x}{(1 + |x|)^{d+2s-1}}  + c\frac{ |y_o|}{R} \int_{B_{R}} \frac{\d x}{(1 + |x|)^{d+2s} } + c\int_{ B^c_{R}} \frac{\d x}{(1 + |x|)^{d+2s} }.
\end{align*}
Here, the constant $c=c(d)$ is the same as in the previous estimate.
Among these three integrals on the right-hand side, it is clear that the last integral can be estimated by $c(d,s)/R^{2s}$, while the second integral is bounded by $\mu(\R^d)$. For the first integral, we estimate instead
\begin{align*}
    \int_{B_{R}} \frac{\d x}{(1 + |x|)^{d+2s-1}}&=\int_{B_{R}\setminus B_1} \frac{\d x}{(1 + |x|)^{d+2s-1}}+\int_{B_{1}} \frac{\d x}{(1 + |x|)^{d+2s-1}}\\
    &\le \int_{B_{R}\setminus B_1} \frac{\d x}{|x|^{d+2s-1}} + |B_1|\\
    &\le \mathfrak{C}(R,s)+|B_1|,
\end{align*}
where $\mathfrak{C}(R,s)$ is as in \eqref{Eq:C(R,s)}.
Having these considerations, we can continue and estimate
\begin{align*}
    \sum_{\substack{x_o\in\mathcal{S}_R\\ |x_o-y_o|\ge4}} \frac{1 \wedge R^{-1} |x_o - y_o| }{(1 + |x_o|)^{d+2s}}\le c(d)\frac{\mathfrak{C}(R,s)+|B_1|}{R} + c(d) \frac{ |y_o|}{R}+ \frac{c(d,s)}{R^{2s}}.
\end{align*}

Thus, for any $K>100$ and $R>2K$ we have that 
\begin{align*}
     \sum_{\substack{x_o\in\mathcal{S}_R,\, y_o \in  \mathcal{S}_K\\ |x_o-y_o|\ge4}}  J_{2,2}(x_o,y_o) &\le c \bigg[\frac{\mathfrak{C}(R,s)}{R}+\frac{1}{R^{ 2s}}  +  \frac{K}{R}\bigg] \sum_{y_o\in\mathcal{S}_K}\int_0^T\int_{B_1(y_o)} \frac{u(t,y)}{ (1 + |y|)^{d+2s}} \d y \d t \\
     &\overset{\eqref{Eq:cover-balls}}{\le} c \bigg[\frac{\mathfrak{C}(R,s)}{R}+\frac{1}{R^{ 2s}} +  \frac{K}{R}\bigg]\Vert u \Vert_{L^1((0,T);L^1(\R^d;\mu))}
\end{align*}
for some  $c=c(d,s)$.
Next, for the second possibility, i.e. when $y_o \in \mathcal{S}_{R} \setminus \mathcal{S}_K$, we estimate by the exact same argument as before and conclude
\begin{align*}
    \sum_{\substack{x_o\in\mathcal{S}_R\\ |x_o-y_o|\ge4}} \frac{1 \wedge R^{-1} |x_o - y_o|}{(1 + |x_o|)^{d+2s} } \le c(d)\frac{\mathfrak{C}(R,s)+|B_1|}R + c(d) \frac{ |y_o|}{R}+ \frac{c(d,s)}{R^{2s}} \le c(d,s).
\end{align*}
Observing also that $B_1(y_o)\subset   B^c_{K-1}$ whenever $y_o \in \mathcal{S}_{R}\setminus \mathcal{S}_K$, we then obtain
\begin{align*}
    \sum_{\substack{x_o\in\mathcal{S}_R,\, y_o \in \mathcal{S}_{R} \setminus \mathcal{S}_K\\ |x_o-y_o|\ge4}} J_{2,2}(x_o,y_o) &\le c \sum_{y_o \in \mathcal{S}_{R}\setminus \mathcal{S}_K} \int_0^T\int_{B_1(y_o)} \frac{u(t,y)}{ (1 + |y|)^{d+2s}} \d y \d t\\
    &\le c  \int_0^T\int_{ B^c_{K-1}} \bigg(\sum_{y_o \in \mathcal{S}_{R} }\chi_{B_1(y_o)} \bigg) \frac{u(t,y)}{ (1 + |y|)^{d+2s}} \d y \d t\\
    &\overset{\eqref{Eq:cover-balls}}{\le} c \bigg[ \int_0^T \int_{  B^c_{K-1}} \frac{u(t,y)}{ (1 + |y|)^{d+2s}} \d y \d t \bigg]
\end{align*}
for some  $c=c(d)$.
We collect these estimates for both possibilities in \eqref{Eq:J22} and arrive at 
\begin{align}\label{Eq:J22+}\nonumber
    \sum_{\substack{x_o,\, y_o\in\mathcal{S}_R\\ |x_o-y_o|\ge4}} J_{2,2}(x_o,y_o) 
    &\le c \bigg[\frac{\mathfrak{C}(R,s)}{R}+\frac{1}{R^{ 2s}} +  \frac{K}{R}\bigg]\Vert u \Vert_{L^1((0,T);L^1(\R^d;\mu))}\\
    &\quad+c \bigg[ \int_0^T \int_{ B^c_{K-1}} \frac{u(t,y)}{ (1 + |y|)^{d+2s}} \d y \d t \bigg]
\end{align}
for some  $c=c(d,s)$.
Substituting the estimates \eqref{Eq:J1}, \eqref{Eq:J21} and \eqref{Eq:J22+} in \eqref{Eq:case:2} finishes the analysis of the \textbf{second case}. Combining this with the estimate \eqref{Eq:case:1} from the \textbf{first case}, we conclude that for any $K>100$ and any $R>2K$, it holds 
\begin{align*}
    \mathbf{I}_1&\le c\bigg[\frac{\mathfrak{C}(R,s)}{R}+\frac{1}{R^{ 2s}} +  \frac{K}{R}\bigg] \Vert  u \Vert_{L^1((0,2T) ; L^1(\R^d;\mu))}+c \bigg[ \int_0^{T} \int_{ B^c_{K-1}} \frac{u(t,y)}{ (1 + |y|)^{d+2s}} \d y \d t \bigg]
\end{align*}
for some $c=c(d,s,\lm, \Lm, T)$.
Note that the second quantity on the right-hand side can be made smaller than any $\eps > 0$ by choosing $K$ large enough, thanks to the assumption $u \in L^1((0,2T);L^1(\R^d;\mu))$. 
After $K$ is fixed, we choose $R$ large enough, such that the first quantity is also estimated by $\varep$, and hence $\mathbf{I}_1 \le 2\eps$. The proof is complete.
\end{proof}

\begin{lemma}
    \label{lemma:I2}
    Assume that the kernel $K$ satisfies the upper bound in \eqref{eq:Kcomp} and \eqref{eq:coercive}.
Let $u$ be a nonnegative, global weak solution to \eqref{Eq:1:1} in $(0,T) \times \R^d$, and $\Psi$ be a weak solution to
\begin{align*}
\begin{cases}
    \partial_t \Psi + \mathcal{L}_t \Psi &= 0 ~~ \text{ in } (0,T) \times \R^d,\\
    \Psi(T,\cdot) &= (1 + |\cdot|)^{-d-2s} ~~ \text{ in } \R^d.
    \end{cases}
\end{align*}
Fix $t_o\in(0, T)$ and define
\begin{align*}
    \mathbf{II}_R := \int_{t_o}^{T}  \iint_{(B_R^c\times B_R^c)^c} u(t,x) |\Psi(t,x) - \Psi(t,y)| (1 \wedge R^{-1}|x-y|) K(t;x,y) \d y \d x   \d t.
\end{align*}
    Then, we have $\displaystyle\lim_{R \to \infty} \mathbf{II}_R = 0$.
\end{lemma}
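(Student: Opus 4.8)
The plan is to mimic the proof of \autoref{lemma:I1}, with the roles of $u$ and the smooth weight interchanged: there the increment $|u(x)-u(y)|$ sat on $u$ and $(1+|x|)^{-d-2s}$ was smooth, whereas here the increment $|\Psi(x)-\Psi(y)|$ sits on $\Psi$ (whose nonlocal energy is controlled by \autoref{Lm:u-Psi}) and $u$ enters only as a weight that is integrable against $\mu$. First I would pass to $\mathbf{II}_{R/2}$ and split the domain of integration as $(B_R\times B_R)\cup(B_{R/2}\times B_R^c)\cup(B_R^c\times B_{R/2})\supset(B_{R/2}^c\times B_{R/2}^c)^c$, exactly as in \autoref{lemma:I1}. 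On the two off-diagonal blocks I would bound $|\Psi(t,x)-\Psi(t,y)|\le\Psi(t,x)+\Psi(t,y)$ and use \autoref{lemma:Psi-bounds} to replace $\Psi(t,x)$ by $c(1+|x|)^{-d-2s}$ and $\Psi(t,y)$ by $c(1+|y|)^{-d-2s}$; the four resulting terms vanish as $R\to\infty$ by elementary computations, using $|x-y|\gtrsim|x|$ (resp.\ $|x-y|\gtrsim|y|$) whenever the non-singular variable lies in the far ball to recover the missing decay, so that each term is dominated either by $cR^{-2s}\Vert u\Vert_{L^1((0,T);L^1(\R^d;\mu))}$ or by a tail $c\int_0^T\int_{B_R^c}(1+|x|)^{-d-2s}u(t,x)\d x\d t$. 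This is parallel to the treatment of $\mathbf{I}_2,\dots,\mathbf{I}_5$ there.

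The substance lies in the on-diagonal term $\mathbf{II}_1=\int_{t_o}^T\iint_{B_R\times B_R}u(t,x)|\Psi(t,x)-\Psi(t,y)|(1\wedge R^{-1}|x-y|)K(t;x,y)\d y\d x\d t$. I would run the covering argument of \autoref{lemma:I1}: with $\mathcal{S}$, $\mathcal{S}_R=\mathcal{S}\cap B_R$ and the properties \eqref{Eq:cover-balls}, estimate $\mathbf{II}_1\le\sum_{x_o,y_o\in\mathcal{S}_R}\int_{t_o}^T\int_{B_1(x_o)}\int_{B_1(y_o)}u(t,x)|\Psi(t,x)-\Psi(t,y)|(1\wedge R^{-1}|x-y|)K\d y\d x\d t$, and split according to $|x_o-y_o|<4$ and $|x_o-y_o|\ge4$. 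In the near-diagonal case $B_1(x_o)\cup B_1(y_o)\subset B_5(x_o)$; I would use $1\wedge R^{-1}|x-y|\le R^{-1}|x-y|$, bound $u(t,x)\le\sup_{(t_o,T]\times B_5(x_o)}u$ by the local boundedness estimate (\autoref{Prop:bd}/\autoref{Prop:bd:2}, available because we are on $(t_o,T]$ with $t_o>0$), which gives $\sup_{(t_o,T]\times B_5(x_o)}u\le c\,A(x_o)$ with $A(x_o):=\int_0^T\int_{\R^d}(1+|z-x_o|)^{-d-2s}u(t,z)\d z\d t$, and invoke a ball-size variant of \autoref{Lm:u-Psi} on $B_5(x_o)$ (valid by the same proof, running \autoref{lemma:energy-decay} on enlarged balls) to bound $\int_{t_o}^T\int_{B_5(x_o)}\int_{B_5(x_o)}|\Psi(t,x)-\Psi(t,y)|K|x-y|\le c(1+|x_o|)^{-d-2s}$. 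Hence the near-diagonal part is $\le\tfrac cR\sum_{x_o\in\mathcal{S}_R}(1+|x_o|)^{-d-2s}A(x_o)$, and interchanging the sum with the integral in $A(x_o)$ together with $\sum_{x_o\in\mathcal{S}}(1+|x_o|)^{-d-2s}(1+|z-x_o|)^{-d-2s}\le c(d,s)(1+|z|)^{-d-2s}$ shows this is $\le\tfrac cR\Vert u\Vert_{L^1((0,T);L^1(\R^d;\mu))}\to0$.

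In the far-diagonal case $|x_o-y_o|\ge4$ one has $|x-y|\ge\tfrac12|x_o-y_o|\ge2$ for $x\in B_1(x_o)$, $y\in B_1(y_o)$, so $K$ is non-singular; applying \autoref{lemma:Psi-bounds} to get $|\Psi(t,x)-\Psi(t,y)|\le c(1+|x_o|)^{-d-2s}+c(1+|y_o|)^{-d-2s}$, using $1\wedge R^{-1}|x-y|\le 2(1\wedge R^{-1}|x_o-y_o|)$, and doing the trivial $\d y$-integration over $B_1(y_o)$, I would reduce to $c\sum_{|x_o-y_o|\ge4}\bigl((1+|x_o|)^{-d-2s}+(1+|y_o|)^{-d-2s}\bigr)\tfrac{1\wedge R^{-1}|x_o-y_o|}{|x_o-y_o|^{d+2s}}\int_{t_o}^T\int_{B_1(x_o)}u(t,x)\d x\d t$. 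Since $(1+|x_o|)^{-d-2s}\int_{B_1(x_o)}u(t,x)\d x\le c\int_{B_1(x_o)}(1+|x|)^{-d-2s}u(t,x)\d x$, the part carrying $(1+|x_o|)^{-d-2s}$ is dealt with as $J_1$ in \autoref{lemma:I1} (via \eqref{Eq:aug-est-1} and the overlap bound in \eqref{Eq:cover-balls}), giving $\le c\bigl(\tfrac{\mathfrak{C}(R,s)}R+\tfrac1{R^{2s}}\bigr)\Vert u\Vert_{L^1((0,T);L^1(\R^d;\mu))}\to0$; after the splitting $\tfrac{(1+|x_o|)^{d+2s}}{(1+|y_o|)^{d+2s}|x_o-y_o|^{d+2s}}\le\tfrac c{|x_o-y_o|^{d+2s}}+\tfrac c{(1+|y_o|)^{d+2s}}$ (using $1+|x_o|\le1+|y_o|+|x_o-y_o|$), the part carrying $(1+|y_o|)^{-d-2s}$ becomes a double sum that is, after interchanging $x_o$ and $y_o$, structurally identical to the sums $\sum J_{2,1}$ and $\sum J_{2,2}$ from \autoref{lemma:I1} and is disposed of by the same device: fix $K>100$ so large that the tail $c\int_0^T\int_{B_{K-1}^c}(1+|y|)^{-d-2s}u(t,y)\d y\d t<\eps$, then let $R\to\infty$ to absorb the remaining $\tfrac{\mathfrak{C}(R,s)+K}R+\tfrac1{R^{2s}}$. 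Collecting all contributions gives $\mathbf{II}_{R/2}\to0$, hence $\mathbf{II}_R\to0$.

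I expect the main obstacle to be $\mathbf{II}_1$, for two reasons. First, in contrast to \autoref{lemma:I1}, the function $u$ appears in the integrand \emph{without} the weight $(1+|x|)^{-d-2s}$, so it must be manufactured; on far-diagonal blocks this is harmless because $(1+|x|)\approx(1+|x_o|)$ on $B_1(x_o)$, but on near-diagonal blocks $u$ is trapped inside a double integral against the increment of $\Psi$, and extracting it forces the passage to $\sup u$ — hence to the local boundedness estimate, which is exactly why the statement is restricted to a time interval $(t_o,T]$ with $t_o>0$ (and also requires a ball-size variant of \autoref{Lm:u-Psi}). Second, the far-diagonal sum inherits the full delicacy of \autoref{lemma:I1}: the lattice sums produce the non-summable factor $\mathfrak{C}(R,s)$ and a term comparable to $R^{-1}|x_o|$, so one cannot simply sum against $(1+|x_o|)^{-d-2s}$ over all of $\mathcal{S}$ and must instead carry out the dichotomy $\mathcal{S}_K$ versus $\mathcal{S}_R\setminus\mathcal{S}_K$, choosing $K$ before $R$.
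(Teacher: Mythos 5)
Your proposal is correct and follows the paper's proof in all essentials: the same five-way splitting of $\mathbf{II}_{R/2}$, the same covering argument over $\mathcal{S}_R$ with the near-/far-diagonal dichotomy, the use of a ball-size variant of \autoref{Lm:u-Psi} together with the local boundedness estimate of \autoref{Prop:bd} on the near-diagonal blocks, and the reduction of the far-diagonal sum to the quantities $J_1$, $J_2$ from \autoref{lemma:I1}. The only difference is cosmetic: you fold the two contributions of \autoref{Prop:bd} into the single quantity $A(x_o)$ and bound $\sum_{x_o}(1+|x_o|)^{-d-2s}A(x_o)$ via a lattice convolution estimate, whereas the paper keeps the local and tail pieces separate and disposes of the tail sum by interchanging the sum with the integral and applying \autoref{Lm:tail-weight-estimate}; both routes are equivalent.
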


\begin{proof}
    By splitting the domain of integration into several parts as in \autoref{lemma:I1}, and recalling the pointwise upper bound for $\Psi$ from \autoref{lemma:Psi-bounds}
    \begin{align}
    \label{eq:Psi-upper}
        \Psi(t,x) \le \frac{c}{(1 + |x|)^{d+2s}}    
    \end{align}
    for some $c=c(d,s,\lm, \Lm, T)$,
    we estimate
\begin{align*}
    \mathbf{II}_{R/2} &\le \int_{t_o}^{T} \bigg( \int_{B_{R}} u(t,x) \int_{B_{R}} |\Psi(t,x) - \Psi(t,y)| (1 \wedge R^{-1}|x-y|) K(t;x,y) \d y \d x \bigg) \d t \\
    & \quad + c\int_{t_o}^{T} \int_{B_{R/2}} \frac{u(t,x)}{(1 + |x|)^{d+2s}} \int_{B_R^c}  \frac{1 \wedge R^{-1}|x-y|}{|x-y|^{d+2s}}  \d y \d x \d t \\
    & \quad + c\int_{t_o}^{T} \int_{B_{R/2}} u(t,x) \int_{B_R^c} \frac{1}{(1 + |y|)^{d+2s}}  \frac{1 \wedge R^{-1}|x-y|}{|x-y|^{d+2s}}  \d y \d x \d t \\
    &\quad + c\int_{t_o}^{T} \int_{B_R^c} \frac{u(t,x)}{(1 + |x|)^{d+2s}} \int_{B_{R/2}}  \frac{1 \wedge R^{-1}|x-y|}{|x-y|^{d+2s}}  \d y \d x \d t \\
     &\quad + c\int_{t_o}^{T} \int_{B_R^c} u(t,x) \int_{B_{R/2}}  \frac{1}{(1 + |y|)^{d+2s}}  \frac{1 \wedge R^{-1}|x-y|}{|x-y|^{d+2s}}  \d y \d x \d t \\
    &=: \textbf{II}_1 + \mathbf{II}_2 + \mathbf{II}_3 + \mathbf{II}_4 + \mathbf{II}_5.
\end{align*}
While the treatment of $\mathbf{II}_2 $, $ \mathbf{II}_3 $, $ \mathbf{II}_4 $ and $ \mathbf{II}_5$ is straightforward, estimating $\mathbf{II}_1$ requires significant effort. Let us treat $\mathbf{II}_1$ at the end of the proof. Moreover, we observe that $\mathbf{II}_2$ and $\mathbf{II}_4$ coincide with the quantities $\mathbf{I}_2$ and $\mathbf{I}_4$ from the proof of \autoref{lemma:I1}. Consequently, they tend to $0$ as $R\to\infty$.
As for $\mathbf{II}_3$, we observe that when $x \in B_{R/2}$ and $y \in B_R^c$, there holds $|y| \le |x-y| + |x| \le |x-y| + \frac{R}{2} \le 2 |x-y|$, and thus we may estimate
    \begin{align*}
        \mathbf{II}_3 &\le c\int_{t_o}^{T} \int_{B_{R/2}} u(t,x) \int_{B_R^c} \frac1{(1 + |y|)^{d+2s} |y|^{d+2s}}  \d y \d x \d t \\
        &\le c\int_{t_o}^{T} \int_{B_{R/2}} \frac{u(t,x)}{ R^{d+4s}} \d x \d t \le \frac{c}{R^{2s}} \Vert u \Vert_{L^1(0,T;L^1(\R^d;\mu))}
    \end{align*}
for some $c=c(d,s,\lm, \Lm, T)$.
 Finally, for $\mathbf{II}_5$, we observe that
    \begin{align*}
        \mathbf{II}_5 &\le c\int_{t_o}^{T} \int_{B_{R/2}} \frac{1}{(1 + |y|)^{d+2s}} \int_{B_R^c}  \frac{u(t,x)}{|x-y|^{d+2s}}  \d y \d x \d t \le c\int_{t_o}^{T} \int_{B_R^c} \frac{u(t,x)}{ |x|^{d+2s} }\d x \d t 
    \end{align*}
for some $c=c(d,s,\lm, \Lm, T)$.
    Therefore, $\mathbf{II}_3$ and $\mathbf{II}_5$ tend to $0$ as $R \to \infty$, since by assumption $u \in L^1((0,T);L^1(\R^d;\mu))$.

    Let us now turn to the estimate for $\textbf{II}_1$. 
The proof is reminiscent of the treatment of $\textbf{I}_1$ in the proof of \autoref{lemma:I1} in the sense that we will again use the covering argument based on \eqref{Eq:cover-balls}.
Indeed, we first use the covering properties in \eqref{Eq:cover-balls} to estimate
    \begin{align*}
    \textbf{II}_1 &\le \sum_{x_o,\, y_o\in\mathcal{S}_R } \int_{t_o}^{T}   \int_{B_{1}(x_o)} u(t,x) \int_{B_{1}(y_o)} |\Psi(t,x) - \Psi(t,y)| (1 \wedge R^{-1}|x-y|) K(t;x,y) \d y \d x   \d t.
\end{align*}
The summation over $x_o,\,y_o\in \mathcal{S}_R$ is divided into \textbf{two cases}, that is, $|x_o-y_o|<4$ and $|x_o-y_o|\ge4$. To deal with the \textbf{first case}, we use \autoref{Lm:u-Psi} to get
    \begin{align*}
    \int_{t_o}^{T}  &\int_{B_5(x_o)} \int_{B_5(x_o)}  |\Psi(t,x) - \Psi(t,y)| K(t;x,y) |x-y|  \d y \d x \d t\le \frac{c}{(1+ |x_o|)^{d+2s}} 
\end{align*}
for some $c=c(d,s,\lm,\Lm, T)$.
This joint with the boundedness estimate in \autoref{Prop:bd} yields an analog of \eqref{eq:on-diag-1}: 
    \begin{align}
    \label{eq:on-diag-2}
    \begin{split}
    \frac{1}{R} \int_{t_o}^{T}  \int_{B_5(x_o)} \int_{B_5(x_o)}& u(t,x) |\Psi(t,x) - \Psi(t,y)| K(t;x,y) |x-y|  \d y \d x \d t \\
    &\le \frac{c}{R} \int_{0}^{T} \int_{B_{10}(x_o)} \frac{u(t,x)}{(1 + |x|)^{d+2s}} \d x \d t\\
    &\quad+ \frac{c}{R} \frac{1}{(1+ |x_o|)^{d+2s}} \int_{0}^{T} \int_{B^c_{10}(x_o)}\frac{u(t,x)}{|x-x_o|^{d+2s}} \d x \d t 
\end{split}
\end{align}
for some $c=c(d,s,\lm,\Lm, T, t_o)$.

As a result of \eqref{eq:on-diag-2}, 
 for the summation over $x_o,\,y_o \in \mathcal{S}_R$ with $|x_o - y_o| < 4$, we have
\begin{align*}
    &\sum_{\substack{x_o,\, y_o\in\mathcal{S}_R\\ |x_o-y_o|<4}} \int_{t_o}^{T}  \int_{B_{1}(x_o)} u(t,x) \int_{B_{1}(y_o)} |\Psi(t,x) - \Psi(t,y)| (1 \wedge R^{-1}|x-y|) K(t;x,y)  \d y \d x  \d t \\ \nonumber
&\qquad\le \frac{8^d}R\sum_{x_o\in\mathcal{S}_R}\int_{t_o}^{T}  \int_{B_{5}(x_o)} u(t,x) \int_{B_{5}(y_o)} |\Psi(t,x) - \Psi(t,y)| (1 \wedge R^{-1}|x-y|) K(t;x,y) \d y \d x   \d t\\
&\qquad\le \frac{c}{R}\sum_{x_o\in\mathcal{S}_R} \int_{0}^{T} \int_{B_{10}(x_o)} \frac{u(t,x)}{(1 + |x|)^{d+2s}} \d x \d t\\
&\quad\qquad + \frac{c}{R}\sum_{x_o\in\mathcal{S}_R} \frac{1}{(1+ |x_o|)^{d+2s}} \int_{0}^{T} \int_{B^c_{10}(x_o)}\frac{u(t,x)}{|x-x_o|^{d+2s}} \d x \d t  
\end{align*}
for some $c=c(d,s,\lm,\Lm, T, t_o)$.
It is straightforward to see that the summation in the first term of the right-hand side can be estimated by
\begin{align*}
\sum_{x_o\in\mathcal{S}_R} \int_{0}^{T} \int_{B_{10}(x_o)} \frac{u(t,x)}{(1 + |x|)^{d+2s}} \d x \d t \overset{\eqref{Eq:cover-balls}}{\le} 
    c\, \Vert  u \Vert_{L^1((0,T) ; L^1(\R^d;\mu))}
\end{align*}
for some $c=c(d)$.
The next goal is to show the second summation on the right-hand side can be bounded by the same quantity. In order to do that, we begin by estimating
\begin{align}\label{Eq:tail-control}
     \sum_{x_o\in\mathcal{S}_R} & \frac{1}{(1+ |x_o|)^{d+2s}} \int_{0}^{T} \int_{B_{10}^c(x_o)}\frac{u(t,x)}{|x-x_o|^{d+2s}} \d x \d t\\ \nonumber
     &= \int_{0}^{T} \int_{\R^d}\sum_{x_o\in\mathcal{S}_R}  \frac{\chi_{B_{10}^c(x_o)}(x)}{(1+ |x_o|)^{d+2s}}\frac{u(t,x)}{|x-x_o|^{d+2s}} \d x \d t\\ \nonumber
     &\le \int_{0}^{T} \int_{\R^d} u(t,x) \bigg(\sum_{x_o\in\mathcal{S}_R \setminus B_{10}(x)} \frac{1}{(1+ |x_o|)^{d+2s} |x-x_o|^{d+2s}} \bigg) \d x  \d t.
\end{align}
Moreover, for any $x\in\R^d$, the sum in the parenthesis can be estimated by 
\begin{align*}
    \sum_{x_o\in\mathcal{S} \setminus B_{10}(x)} &\frac{1}{(1+ |x_o|)^{d+2s} |x-x_o|^{d+2s}}\\ 
    &= \sum_{x_o\in\mathcal{S} \setminus B_{10}} \frac{1}{(1+ |x - x_o|)^{d+2s} |x_o|^{d+2s}} \\
    &\le c\sum_{x_o\in\mathcal{S} \setminus B_{10}} \int_{B_1(x_o)}\frac{\d y}{(1+ |x - y|)^{d+2s} |y|^{d+2s}}\\
    &\le c \int_{B^c_8}\underbrace{\bigg(\sum_{x_o\in\mathcal{S} \setminus B_{10}}\chi_{B_1(x_o)}\bigg)}_{\le C(d)\>\>\text{by}\>\eqref{Eq:cover-balls}}\frac{\d y}{(1+ |x - y|)^{d+2s} |y|^{d+2s}}\\
    &\le c \int_{B^c_8}\frac{\d y}{(1+ |x - y|)^{d+2s} |y|^{d+2s}}\\
    &\le\frac{c}{(1 + |x|)^{d+2s}}
\end{align*}
for some $c=c(d,s)$.
Here, the last estimate follows from \autoref{Lm:tail-weight-estimate}.
 Collecting these estimates in \eqref{Eq:tail-control} we obtain
     \[
     \sum_{x_o\in\mathcal{S}_R}  \frac{1}{(1+ |x_o|)^{d+2s}} \int_{0}^{T} \int_{B^c_{10}(x_o)}\frac{u(t,x)}{|x-x_o|^{d+2s}} \d x \d t\le c\,\Vert u \Vert_{L^1((0,T) ; L^1(\R^d;\mu))}
     \]
     for some $c=c(d,s)$.
     Summarizing the analysis of the \textbf{first case}, we arrive at
     \begin{align*}
         \sum_{\substack{x_o,\, y_o\in\mathcal{S}_R\\ |x_o-y_o|<4}} \int_{t_o}^{T}  \int_{B_{1}(x_o)}& u(t,x) \int_{B_{1}(y_o)} |\Psi(t,x) - \Psi(t,y)| (1\wedge R^{-1}|x-y|) K(t;x,y) \d y \d x  \d t\\
         &\le \frac{c}{R}\Vert u \Vert_{L^1((0,T) ; L^1(\R^d;\mu))}
     \end{align*}
for some $c=c(d,s, \lm,\Lm, T, t_o)$.

It remains to consider the \textbf{second case}, namely, $x_o, y_o \in \mathcal{S}_R$ with $|x_o - y_o| \ge 4$. In this case, we estimate using \eqref{eq:Psi-upper} as
\begin{align*}
    \int_{t_o}^{T} & \int_{B_1(x_o)} \int_{B_1(y_o)}  u(t,x) |\Psi(t,x) - \Psi(t,y)|  (1 \wedge R^{-1}|x-y|) K(t;x,y)  \d y \d x \d t \\
    &\le c \int_{t_o}^{T} \int_{B_1(x_o)} \frac{u(t,x)}{(1 + |x|)^{d+2s}} \int_{B_1(y_o)} \frac{1\wedge R^{-1}|x-y| }{|x-y|^{d+2s}} \d y \d x \d t \\
    &\quad + c \int_{t_o}^{T} \int_{B_1(x_o)} u(t,x) \int_{B_1(y_o)} \frac{1\wedge R^{-1}|x-y| }{(1 + |y|)^{d+2s} |x-y|^{d+2s}}  \d y \d x \d t \\
    &\le  c \frac{1\wedge R^{-1} |x_o - y_o| }{|x_o - y_o|^{d+2s}} \int_{t_o}^{T} \int_{B_1(x_o)} \frac{u(t,x)}{(1 + |x|)^{d+2s}} \d x \d t \\
    &\quad + c \bigg(\frac{1 + |x_o|}{1+|y_o|}\bigg)^{d+2s}  \frac{1\wedge R^{-1} |x_o - y_o| }{|x_o - y_o|^{d+2s}}\int_{t_o}^{T} \int_{B_1(x_o)} \frac{u(t,x)}{(1 + |x|)^{d+2s}}  \d x \d t\\
    &=: J_3(x_o,y_o) + J_4(x_o,y_o).
\end{align*}
Here, we have $c=c(d,s, \lm,\Lm, T)$.

We observe that $J_3(x_o,y_o)$ and $J_4(x_o,y_o)$ are exactly the same terms as $J_1(x_o,y_o)$ and $J_2(y_o,x_o)$ from the proof of \autoref{lemma:I1}. Hence, from here, we conclude the proof by the same arguments as in the proof of \autoref{lemma:I1}.
\end{proof}

\subsection{Proof of \autoref{Thm:0}}

First of all, we deal with the supremum estimate \eqref{eq:thm-0-1}.
Fix $\tau\in(0,T)$ and consider the new global solution
\begin{equation}\label{Eq:u-to-v}
v(t,x):=u(\tau t, \tau^{\frac{1}{2s}} x)\quad \text{in}\> \Big(0,\frac{T}{\tau}\Big)\times\R^d.
\end{equation}
Applying \autoref{Prop:bd:2} to $v$ over the cylinder $(\frac14,1]\times B_1(x_o)$, 
there exists $c>1$ depending only on $d$, $s$, $\lm$ and $\Lm$, such that
    \begin{align*}
        \sup_{(\frac14 , 1]\times B_1(x_o)} v \le  c \int_0^{1}\int_{\R^d}\frac{v(t,x)}{(1+|x-x_o|)^{d+2s}}\d x \d t.
    \end{align*}
The right-hand side integral can be estimated  by \autoref{lemma:weighted-L1-time-insensitive} in $(0,1]\times \R^d$. Consequently, we obtain
    \begin{align*}
\int_0^{1}\int_{\R^d}\frac{v(t,x)}{(1+|x-x_o|)^{d+2s}}\d x \d t & = \int_0^{1}\int_{\R^d}\frac{v(t,x+x_o)}{(1+|x|)^{d+2s}}\d x \d t\\
&\le c  \int_{\R^d}\frac{v(1,x+x_o)}{(1+|x|)^{d+2s}}\d x\\
&=c \int_{\R^d}\frac{v(1,x)}{(1+|x-x_o|)^{d+2s}}\d x,
    \end{align*}
for some $c=c(d,s,\lm,\Lm)$, since $v(t,x+x_o)$ is also a global solution in $(0,1]\times \R^d$. 
Therefore, we arrive at
    \begin{align*}
        \sup_{(\frac14,1]\times B_1(x_o)} v \le  c \int_{\R^d}\frac{v(1,x)}{(1+|x-x_o|)^{d+2s}}\d x .
    \end{align*}
Finally, scaling the above estimate back to $u$, we conclude that for any $\tau\in(0,T)$,
\[
\sup_{(\frac14\tau,\tau]\times B_{\tau^{1/2s}}(x_o)} u \le c\, \tau \int_{\R^d} \frac{u(\tau,x)}{(\tau^{1/2s}+|x-x_o|)^{d+2s} }\d x
\]
where $c=c(d,s,\lm,\Lm)$.

Next, we show the infimum estimate \eqref{eq:thm-0-2}. As before, we fix $\tau\in(0,T)$ and consider the global solution defined in \eqref{Eq:u-to-v}. 
Apply \autoref{Prop:WHI-global} to $v$ in $(0,1]\times\R^d$ and obtain that
    \begin{align*}
        c\,\inf_{(\frac34,1]\times B_1(x_o)} v  \ge   \int_{\frac14 }^{\frac12 }\int_{\R^d}\frac{v(t,x)}{(1+|x-x_o|)^{d+2s}}\d x \d t
    \end{align*}
for some $c=c(d,s,\lm,\Lm)$.
Like in the supremum estimate, the right-hand side is estimated by resorting to \autoref{lemma:weighted-L1-time-insensitive} in $(0,1]\times\R^d$. As a result, we have
    \begin{align*}
        c\,\inf_{(\frac34,1]\times B_1(x_o)}  v &\ge  \int_{\R^d}\frac{v(1,x)}{(1+|x-x_o|)^{d+2s}}\d x 
    \end{align*}
for some $c=c(d,s,\lm,\Lm)$.
Finally, we scale the above estimate back to the original solution $u$ and conclude that
     \begin{align*}
        c\,\inf_{(\frac34\tau,\tau]\times B_{\tau^{1/2s}}(x_o)} u  &\ge  \tau \int_{\R^d}\frac{u(\tau,x)}{(\tau^{1/2s}+|x-x_o|)^{d+2s}}\d x 
    \end{align*}
for some $c=c(d,s,\lm,\Lm)$. This completes the proof.

\subsection{Time-insensitive Harnack estimates fail for local solutions}
\label{subsec:counterexample}
The following result applies to local solutions in the sense of \autoref{Def:local-sol}. It shows that if an elliptic-type Harnack estimate holds, then positivity spreads in a time-insensitive fashion.

\begin{proposition}
\label{prop:slice-implies-insensitive}
Assume that the kernel $K$ satisfies \eqref{eq:Kcomp}. Let $u\ge0$ be a local, weak solution in $(-16^{2s},16^{2s})\times B_{16}$ in the sense of \autoref{Def:local-sol}. Assume that there exists $c_{\rm H}>1$, such that
\begin{equation}\label{Eq:ellip}
\sup_{B_8(y)} u(t,\cdot)\le c_{\rm H} \inf_{B_8(y)} u(t,\cdot)
\end{equation}
holds for any $y\in B_1$ and $t\in(-1,1)$.
Then, there exist some constant $c>1$ depending on  $c_{\rm H}$ and the data $\{d,s,\lm,\Lm\}$, and another constant $\gm\in(0,1)$ depending additionally on the quantity
\[
\int^{1}_{-5^{2s}}    \int_{ B^c_{2}} \frac{u(t,x)}{|x|^{d+2s}}\d x \d t,
\]
and the modulus of continuity
\[
r\mapsto \sup_{t_o\in(-1,1)} \int^{t_o}_{t_o-(3r)^{2s}}    \int_{B^c_{2}} \frac{u(t,x)}{|x|^{d+2s}}\d x \d t,
\]
such that if $u(t_o,x_o)\neq 0$ for some $(t_o,x_o)\in (-1,1)\times B_1$, then
\[
c^{-1}\sup_{B_{1}(x_o)} u(t,\cdot)  \le u(t_o,x_o)\le c \inf_{B_{1}(x_o)} u(\tau,\cdot)
\]
for any $t,\tau\in[t_o-\gm, t_o+\gm]$.
\end{proposition}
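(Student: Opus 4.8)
\emph{Proof plan.} The plan is to use the hypothesis \eqref{Eq:ellip} to collapse the statement to a one‑dimensional (in time) claim about $t\mapsto u(t,x_o)$, and then to control that function by the continuity estimate \autoref{Thm:mod-con}. Write $M:=u(t_o,x_o)$, so that $M>0$ since $u\ge0$. First observe that \eqref{Eq:ellip} with $y=x_o\in B_1$ promotes a value at $x_o$ on a fixed slice to one on all of $B_8(x_o)$: for each $t\in(-1,1)$ both $u(t,x)$ with $x\in B_8(x_o)\supset B_1(x_o)$ and $u(t,x_o)$ lie between $\inf_{B_8(x_o)}u(t,\cdot)$ and $\sup_{B_8(x_o)}u(t,\cdot)$, and these differ by a factor $\le c_{\rm H}$, hence
\[
c_{\rm H}^{-1}\,u(t,x_o)\ \le\ u(t,x)\ \le\ c_{\rm H}\,u(t,x_o),\qquad x\in B_8(x_o),\ \ t\in(-1,1).
\]
Thus it is enough to find $\gm\in(0,1)$ and $c>1$ with the claimed dependence such that $c^{-1}M\le u(t,x_o)\le cM$ for all $t\in[t_o-\gm,t_o+\gm]$; one more application of \eqref{Eq:ellip} then gives $\sup_{B_1(x_o)}u(t,\cdot)\le c_{\rm H}u(t,x_o)$ and $\inf_{B_1(x_o)}u(\tau,\cdot)\ge c_{\rm H}^{-1}u(\tau,x_o)$, which is the Proposition with $c$ replaced by $c_{\rm H}c$. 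Note that $u$, being a weak solution, is locally bounded in the interior, so \autoref{Thm:mod-con} is applicable.

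To control $t\mapsto u(t,x_o)$ near $t_o$, I would invoke \autoref{Thm:mod-con} at the \emph{shifted} final time $\bar t:=t_o+\gm$. Fix $\widetilde R>0$, depending only on $s$, small enough that $(\bar t-\widetilde R^{2s},\bar t\,]\times B_{\widetilde R}(x_o)\subset(-16^{2s},16^{2s})\times B_{16}$ for all admissible $t_o,x_o$ (possible since $t_o\in(-1,1)$, $x_o\in B_1$ and $\gm$ is small), put $R:=\widetilde R/2$, and tie the inner radius to the time‑lag by $r^{2s}=2\gm$, so that $r\in(0,R)$ once $\gm$ is small. Then \autoref{Thm:mod-con} gives
\[
\osc_{(\bar t-r^{2s},\,\bar t\,]\times B_r(x_o)}u\ \le\ 2\,\boldsymbol\omega\,(r/R)^{\be}+c\int_{\bar t-(rR)^{s}}^{\bar t}\int_{B^c_{\widetilde R}(x_o)}\frac{u(t,x)}{|x-x_o|^{d+2s}}\,\d x \d t,
\]
with $\boldsymbol\omega$ the amplitude from \autoref{Thm:mod-con} (a $\sup|u|$ plus a tail integral, both over the cylinder $(\bar t-\widetilde R^{2s},\bar t\,]\times B_{\widetilde R}(x_o)$). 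Since $r^{2s}=2\gm$, the time interval $(\bar t-r^{2s},\bar t\,]=(t_o-\gm,t_o+\gm\,]$ contains $(t_o,x_o)$, so by continuity of $u$ we obtain $|u(t,x_o)-M|\le\osc_{(\bar t-r^{2s},\,\bar t\,]\times B_r(x_o)}u$ for every $t\in[t_o-\gm,t_o+\gm]$.

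It remains to bound the right‑hand side above by $M/2$ by taking $\gm$ small. The amplitude $\boldsymbol\omega$ is finite and, uniformly over $t_o,x_o$, bounded by a constant depending on interior bounds for $u$ and on $\int_{-5^{2s}}^1\int_{B_2^c}u\,|x|^{-d-2s}\,\d x\d t$ (split $B_{\widetilde R}^c(x_o)$ into its part in $B_2$, where interior boundedness applies, and $B_2^c$, where $|x-x_o|\gtrsim|x|$). The ``tail'' integral in the oscillation bound runs over a time window of length $(rR)^s\to0$ and is, after reparametrising and again separating the bounded region near $x_o$, no larger than a term $\to0$ plus the value at $\tfrac13(rR)^{1/2}$ of the modulus of continuity $\rho\mapsto\sup_{s_o}\int_{s_o-(3\rho)^{2s}}^{s_o}\int_{B_2^c}u\,|x|^{-d-2s}\,\d x\d t$ from the statement, which vanishes as $\rho\to0$. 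Hence, fixing $R$ and then choosing $\gm$ small enough — depending on $c_{\rm H}$, the data, $u(t_o,x_o)$, the quantity $\int_{-5^{2s}}^1\int_{B_2^c}u\,|x|^{-d-2s}\,\d x\d t$ and the said modulus of continuity — we arrive at $\tfrac12 M\le u(t,x_o)\le\tfrac32 M$ on $[t_o-\gm,t_o+\gm]$, and the first paragraph concludes the proof (with $c=2c_{\rm H}$).

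The main obstacle is that \autoref{Thm:mod-con} yields only a \emph{backward}‑in‑time oscillation, anchored at the top of the cylinder; recentring at $\bar t=t_o+\gm$ and squeezing the whole argument into a thin slab about $t_o$ — which must stay inside both the domain $(-16^{2s},16^{2s})\times B_{16}$ and the strip $(-1,1)$ where \eqref{Eq:ellip} is available — is what makes both time directions accessible simultaneously, and is why the spatial radii $8,4,2,\widetilde R$ are kept of fixed size and $\gm$ is taken small. The second delicate point is quantitative: the amplitude $\boldsymbol\omega$ cannot be absorbed for free, so $\gm$ genuinely has to shrink in terms of $\boldsymbol\omega$ and of $M=u(t_o,x_o)$ as well as the nonlocal‑tail quantities above — equivalently, one has to measure precisely how much mass can reach $B_8(x_o)$ from the exterior over a short time interval, which is exactly the content encoded by that modulus of continuity.
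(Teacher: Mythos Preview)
Your approach is correct and genuinely different from the paper's. You handle both time directions in one stroke by applying the oscillation estimate \autoref{Thm:mod-con} at the shifted top time $\bar t=t_o+\gm$, so that the backward cylinder $(\bar t-r^{2s},\bar t\,]$ already covers $[t_o-\gm,t_o+\gm]$. The paper instead applies the oscillation estimate backward from $t_o$ to get a lower bound on $[t_o-\gm,t_o]\times B_2$, then pushes this forward into $[t_o-\tfrac{\gm}{2},t_o+\tfrac{\gm}{2}]$ via the expansion of positivity (\autoref{lemma:expansion-of-positivity}), and finally obtains the supremum bound by a contradiction argument (if the sup were too large at some $(t_*,x_*)$, re-run the infimum estimate there and contradict $u(t_o,x_o)=1$). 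Your route is shorter and avoids both the expansion of positivity and the contradiction; the paper's is more modular.

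There is one place where the paper is sharper. You control $\boldsymbol\omega$ through unspecified ``interior bounds for $u$''; the paper instead first normalizes $u(t_o,x_o)=1$, uses \eqref{Eq:ellip} to get $u(t_o,\cdot)\le c_{\rm H}$ on $B_6$, and then applies the time-lagged parabolic Harnack inequality to deduce $\sup_{[t_o-3^{2s},t_o]\times B_3(y)}u\le\mathfrak B$ with $\mathfrak B=\mathfrak B(d,s,\lm,\Lm,c_{\rm H})$. This makes $\boldsymbol\omega$ depend only on $c_{\rm H}$, the data, and the (rescaled) tail, with no separate interior-sup parameter. If you insert that Harnack step before invoking \autoref{Thm:mod-con}, your dependence for $\gm$ matches the paper's; as written, your $\gm$ carries one extra implicit quantity. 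Both proofs, incidentally, end up with $\gm$ depending on $u(t_o,x_o)$ --- the paper hides this behind the ``rescaling argument'' --- so your explicit acknowledgement of that dependence is honest rather than a defect.
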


\begin{proof}
   Let us take a point $(t_o,x_o)$ from $(-1,1)\times B_1$, such that $u(t_o,x_o)\neq0$. Apart from a rescaling argument, we may assume $u(t_o,x_o)=1$. Consequently, the assumption~\eqref{Eq:ellip} indicates
\begin{equation}\label{c_H<u}
       \frac1{c_{\rm H}}\le u(t_o,x)\le c_{\rm H}, \quad\forall\, x\in B_6.
\end{equation}

The upper bound in \eqref{c_H<u} joint with  the Harnack inequality (see~\cite[Theorem 1.1]{KaWe23}) gives
\[
\sup_{[t_o-3^{2s},t_o]\times B_{3}(y)} u\le \mathfrak{B},\quad\forall\, y\in B_2,
\]
where  $\mathfrak{B}$ depends only on the data $\{d,s,\lm,\Lm\}$ and $c_{\rm H}$. This uniform bound in turn allows us to apply the continuity estimate for $u$ (see \autoref{Thm:mod-con}). We obtain that for
any $r\in(0,1]$, 
	\begin{equation*}
	\osc_{(t_o-r^{2s},t_o]\times B_{ r}(y)}u \le 2 \boldsymbol\om \Big(\frac{r}{3}\Big)^{\be}  + c  \int^{t_o}_{t_o-(3r)^{2s}}    \int_{B^c_{4}(y)} \frac{u(t,x)}{|x-y|^{d+2s}}\d x \d t,
	\end{equation*} 
where $\be\in(0,1)$ and $c>1$ are determined by the data $\{d,s,\lm,\Lm\}$, and we have taken
 $$\boldsymbol\om := 2\mathfrak{B} +\int^{t_o}_{t_o-4^{2s}}    \int_{ B^c_{4}(y)} \frac{u(x,t)}{|x-y|^{d+2s}}\d x \d t.$$
The right-hand side of this  continuity estimate can be made independent of $(t_o,y)$. Indeed, observe that since $t_o\in(-1,1)$ and $y\in B_2$ we have $(t_o-4^{2s}, t_o)\subset (-5^{2s},1)$, $B^c_{5}(y)\subset B^c_{3}$ and $|x-y|\ge\frac13 |x|$ for any $x\in B_3^c$. Hence, we estimate
\begin{align*}
    \int^{t_o}_{t_o-3^{2s}}    \int_{ B^c_{3}(y)} \frac{u(t,x)}{|x-y|^{d+2s}}\d x \d t \le  \int^{1}_{-5^{2s}}    \int_{ B^c_{2}} \frac{u(t,x)}{|x-y|^{d+2s}}\d x \d t \le  3^{d+2s} \int^{1}_{-5^{2s}}    \int_{ B^c_{2}} \frac{u(t,x)}{|x|^{d+2s}}\d x \d t.
\end{align*}
Similarly, we can estimate
\begin{align*}
\int^{t_o}_{t_o-(3r)^{2s}}    \int_{B^c_{3}(y)} \frac{u(t,x)}{|x-y|^{d+2s}}\d x \d t
\le
    3^{d+2s} \sup_{t_o\in(-1,1)}\int^{t_o}_{t_o -(3r)^{2s}}    \int_{ B^c_{2}} \frac{u(t,x)}{|x|^{d+2s}}\d x \d t.
\end{align*}
Therefore, we 
take instead a larger quantity
 $$\widetilde{\boldsymbol\om}=2\mathfrak{B} + 3^{d+2s} \int^{1}_{-5^{2s}}    \int_{ B^c_{2}} \frac{u(t,x)}{|x|^{d+2s}}\d x \d t$$
 and rewrite the continuity estimate as
\begin{equation*}
	\osc_{(t_o-r^{2s},t_o]\times B_{ r}(y)}u \le 2 \widetilde{\boldsymbol\om} \Big(\frac{r}{3}\Big)^{\be}  + c  \sup_{t_o\in(-1,1)} \int^{t_o}_{t_o-(3r)^{2s}}    \int_{B^c_{2}} \frac{u(t,x)}{|x|^{d+2s}}\d x \d t.
	\end{equation*} 
Based on the above estimate, we choose $\gm\in(0,1)$, such that
\begin{equation}\label{Eq:choose-gamma}
2 \widetilde{\boldsymbol\om} \Big(\frac{r}{3}\Big)^{\be}+ c  \sup_{t_o\in(-1,1)} \int^{t_o}_{t_o-(3r)^{2s}}    \int_{B^c_{2}} \frac{u(t,x)}{|x|^{d+2s}}\d x \d t \le \frac1{2c_{\mathrm{H}}}
\end{equation}
holds true for all $r\in(0,\gm]$. Such $\gm$ can be selected in terms of the data $\{d,s,\lm,\Lm\}$, $c_{\mathrm{H}}$, the quantity
\begin{equation}\label{Eq:choose-gamma:1}
\int^{1}_{-5^{2s}}    \int_{ B^c_{2}} \frac{u(t,x)}{|x|^{d+2s}}\d x \d t
\end{equation}
and the modulus of continuity
\begin{equation}\label{Eq:choose-gamma:2}
r\mapsto \sup_{t_o\in(-1,1)} \int^{t_o}_{t_o-(3r)^{2s}}    \int_{B^c_{2}} \frac{u(t,x)}{|x|^{d+2s}}\d x \d t.
\end{equation}
Together with the lower bound from \eqref{c_H<u}, we obtain  that
\[
\inf_{[t_o-\gm, t_o]\times B_{2}} u\ge\frac1{2 c_{\rm H}}.
\]
We use this positivity estimate together with the expansion of positivity (\autoref{lemma:expansion-of-positivity}) to conclude that
\begin{equation}\label{Eq:inf-est}
\inf_{[t_o-\frac12\gm, t_o+\frac12\gm]\times B_{2}} u\ge\frac{\eta}{2 c_{\rm H}}
\end{equation}
for some $\eta\in(0,1)$ depending only on the data $\{d,s,\lm,\Lm\}$.

Next we claim that
\begin{equation}\label{Eq:sup-est}
\sup_{[t_o-\frac14\gm, t_o+\frac14\gm]\times B_{1}} u\le \frac{4 c_{\rm H}}{\eta}.
\end{equation}
Indeed, if not, there exists some $(t_*,x_*)\in [t_o -\frac14\gm, t_o+\frac14\gm]\times B_{1}$, such that 
\[
u(t_*,x_*) = \frac{4 c_{\rm H}}{\eta}.
\]
Then, applying the previous result at $(t_*,x_*)$, modulo a proper rescaling, we obtain that
\[
\inf_{[t_*-\frac12\gm, t_* +\frac12\gm]\times B_{2}(x_*)}u \ge \frac{4 c_{\rm H}}{\eta}\cdot\frac{\eta}{2 c_{\rm H}}=2.
\]
This contradicts the fact that $(t_o,x_o)\in [t_*-\frac12\gm, t_* +\frac12\gm]\times B_{2}(x_*)$ and $u(t_o,x_o)=1$. Therefore, the desired conclusion follows from \eqref{Eq:inf-est} and \eqref{Eq:sup-est} if we choose $c$ to be $4c_{\mathrm{H}}/\eta$ and redefine $\gm/4$ as $\gm$.
\end{proof}

\begin{remark}
    If we know apriori that $u\le M$ for some $M>0$, then \eqref{Eq:choose-gamma:1} and \eqref{Eq:choose-gamma:2} can be estimated more explicitly, and hence the quantity $\gm$ of \autoref{prop:slice-implies-insensitive} can be chosen in terms of $M$, $c_\mathrm{H}$ and the data. In this setting, the key qualitative information is that if an elliptic-type estimate \eqref{Eq:ellip} holds for any $y\in B_1$ and $t\in(-1,1)$, then, by a simple chain-argument, $u$ must be either positive or vanishing in $B_1\times(-1,1)$.
\end{remark}

The following example shows that the waiting-time is necessary in the parabolic Harnack inequality for solutions in bounded domains.

\begin{example}
Let $u$ be the weak solution to
\begin{align*}
\begin{cases}
\partial_t u + (-\Delta)^s u &= 0 ~~ \text{ in } (-16^{2s} , 16^{2s}) \times B_{16},\\
u &= 0 ~~ \text{ in } (-16^{2s},0] \times (\R^d \setminus B_{16}),\\
u &= 1 ~~ \text{ in } (0,16^{2s}) \times (\R^d \setminus B_{16}),\\
u(-16^{2s}) &= 0 ~~ \text{ in } \R^d.
\end{cases}
\end{align*}
Clearly, by the maximum principle (see \cite[Lemma 5.2]{KaWe23}) it must be $u \equiv 0$ in $(-16^{2s},0] \times B_{16}$ and $0<u \le 1$ in $(0,16^{2s}) \times B_{16}$. \autoref{prop:slice-implies-insensitive} indicates that \eqref{Eq:ellip} cannot hold for $u$.
\end{example}

\section{Improved weak Harnack inequality for local solutions}
\label{sec:improved-weak-Harnack}

\subsection{Expansion of positivity}
A central ingredient of our proof is the following expansion of positivity result from \cite[Proposition 4.1]{Lia24b}. 

Note that the proof only requires the kernel $K$ to satisfy the upper bound in \eqref{eq:Kcomp} and \eqref{eq:coercive}.

\begin{proposition}
\label{lemma:expansion-of-positivity}
Assume that the kernel $K$ satisfies the upper bound in \eqref{eq:Kcomp} and \eqref{eq:coercive}.
Let $u \ge 0$ be a weak supersolution in $(t_o , t_o + 4R^{2s}) \times B_{4R}(x_o)$ in the sense of \autoref{Def:local-sol}. Assume that for some $\alpha \in (0,1]$ and $k > 0$ it holds
\begin{align*}
| \{ u(t_o,\cdot) \ge k \} \cap B_R(x_o)| \ge \alpha |B_R(x_o)|.
\end{align*}
Then, there exist constants $\eta \in (0,1)$ and $p>1$ depending only on the data $\{d,s,\lm,\Lm\}$, such that
\begin{align*}
u \ge \eta \alpha^p k ~~ \text{ in } (t_o + R^{2s}, t_o + 4R^{2s}) \times B_{2R}(x_o).
\end{align*}
\end{proposition}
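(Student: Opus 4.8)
\emph{Strategy and reduction.} The statement is the nonlocal parabolic ``expansion of positivity'', and the plan is to run the De Giorgi--Moser scheme of DiBenedetto--Gianazza--Vespri (see \cite{DGV12}) in its nonlocal incarnation, following \cite{Lia24b}. The only structural inputs are the energy estimate \autoref{Lm:energy-eps} (together with its analogue for the truncations $(u-\ell)_-$, proved in exactly the same way), the fractional Poincar\'e and Sobolev inequalities encoded in the coercivity condition \eqref{eq:coercive}, and -- decisively -- the hypothesis $u\ge0$ on all of $\R^d$, which makes the nonlocal tail terms appear with a favourable sign rather than as obstructions. First, the change of variables $(t,x)\mapsto(t_o+R^{2s}t,\,x_o+Rx)$ together with $u\mapsto u/k$ preserves the upper bound in \eqref{eq:Kcomp}, the coercivity \eqref{eq:coercive}, and the supersolution property of \autoref{Def:local-sol}; hence we may assume $t_o=0$, $x_o=0$, $R=1$, $k=1$, so that $u\ge0$ is a weak supersolution in $(0,4)\times B_4$ with $|\{u(0,\cdot)\ge1\}\cap B_1|\ge\alpha|B_1|$, and we must produce $\eta\in(0,1)$ and $p>1$ depending only on $\{d,s,\lambda,\Lambda\}$ with $u\ge\eta\alpha^p$ on $(1,4)\times B_2$.

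\emph{Step 1: forward propagation of the measure estimate at a polynomial level.} Testing the equation with a logarithmic function $\phi=\zeta^2\bigl(\ln\tfrac{\ell'+\ell'/M}{u+\ell'/M}\bigr)_+$ -- where $\zeta$ cuts off $B_1$ and is supported in $B_{1+\sigma}$ with $\sigma\simeq\alpha$, the truncation parameter is calibrated as $M\simeq\alpha^{-1}$, and $\ell'\le1$ -- in the (time-mollified) weak formulation of $u$, and bounding the long-range contributions using $u\ge0$, one obtains a Moser-type logarithmic estimate $\int_{B_1}\phi^2(t,x)\d x\le\int_{B_{1+\sigma}}\phi^2(0,x)\d x+c\,\alpha^{-2}t$. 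Since $\phi(0,\cdot)$ vanishes on the $\alpha$-fraction where $u(0,\cdot)\ge1$ while $\|\phi\|_{\infty}\simeq\ln M$, the calibration $M\simeq\alpha^{-1}$ keeps the loss factor $\tfrac{(\ln(M+1))^2}{(\ln M)^2}\simeq1+(M\ln M)^{-1}$ close enough to $1$ that a Chebyshev argument yields a level $\ell_1\simeq\alpha$ and a time $\delta_1\simeq\alpha^{3}$ (up to logarithmic factors, which are harmless) with $|\{u(t,\cdot)\ge\ell_1\}\cap B_1|\ge\tfrac\alpha2|B_1|$ for all $t\in(0,\delta_1)$. This is the step where the exponent $p$ is created; since $\alpha\le1$, any $p$ at least as large as the one produced here is admissible.

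\emph{Steps 2 and 3: De Giorgi iteration and expansion.} On the slab $(0,\delta_1)\times B_1$ the energy estimate \autoref{Lm:energy-eps} (and its $(u-\ell)_-$ analogue) controls the time-averaged Gagliardo energy $[(u(t,\cdot)-\ell_1)_-]^2_{H^s(B_1)}$; feeding this into the De Giorgi isoperimetric inequality -- in which, via \eqref{eq:coercive}, the superlevel sets of $u(t,\cdot)$ are controlled by that Gagliardo energy -- one shrinks the bad set \emph{without} further loss of level: $|\{u(t,\cdot)<\tfrac12\ell_1\}\cap B_{3/4}|\le\nu_0|B_{3/4}|$ for $t\in(\tfrac12\delta_1,\delta_1)$, with $\nu_0=\nu_0(d,s,\lambda,\Lambda)$ the threshold of the De Giorgi lemma; a standard De Giorgi iteration for $(u-\ell)_-$ (Caccioppoli estimate plus the fractional Sobolev inequality, tails via $u\ge0$) then gives $u\ge\tfrac14\ell_1$ on $(\tfrac34\delta_1,\delta_1)\times B_{1/2}$. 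Finally one propagates this pointwise positivity to $(1,4)\times B_2$: since $u$ is now bounded below on a full-measure subset of a ball, Steps~1--2 may be rerun with the parameter $1$ in place of $\alpha$, hence with \emph{universal} constants, so that positivity persists forward in time and spreads to larger balls -- concretely, on each cylinder of the family $(\tfrac34\delta_1,4)\times B_r$, $r\in\{\tfrac12,1,2\}$, the truncation $(u-\ell)_-$ vanishes on a fixed fraction of $B_r$ (the previous ball, where $u$ is already controlled), so the fractional Poincar\'e inequality from \eqref{eq:coercive} makes $\|(u-\ell)_-\|_{L^2(B_r)}$ dominated by the Gagliardo energy, which \autoref{Lm:energy-eps} bounds together with the tail; a finite De Giorgi iteration yields $u\ge c_0\ell_1$ on $(1,4)\times B_2$ with $c_0=c_0(d,s,\lambda,\Lambda)$, and, undoing the scaling of the reduction step and recalling $\ell_1\simeq\alpha$, this is the claim.

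\emph{Main obstacle.} The two genuine difficulties are, first, the quantitative control of the $\alpha$-dependence: the logarithmic estimate a priori propagates positivity only at an exponentially small level, and it is precisely the calibration $M\simeq\alpha^{-1}$ of the truncation parameter -- together with the fact that the De Giorgi isoperimetric inequality shrinks the bad set without degrading the level -- that turns this into a genuine power $\alpha^p$; and second, the general-kernel setting, in which there is no pointwise lower bound on $K$, so that the coercivity \eqref{eq:coercive} (that is, the fractional Poincar\'e and Sobolev inequalities) must substitute for it at every step while the numerous nonlocal tail terms created by localisation are absorbed -- which is exactly where the global sign condition $u\ge0$ is indispensable, as it forces every ``incoming jump'' from a region where $u$ is already positive to act in our favour.
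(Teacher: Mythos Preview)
Your proposal is correct and follows the same DGV-style strategy as the paper, which essentially cites \cite[Proposition~4.1]{Lia24b}. The only point the paper's proof addresses is how to dispense with the pointwise lower bound on $K$ that \cite{Lia24b} used (via \cite[Lemma~3.4]{Lia24b}): it observes that the logarithmic measure-propagation step (\cite[Lemma~3.3]{Lia24b}, your Step~1) already holds under the upper bound in \eqref{eq:Kcomp} and the coercivity \eqref{eq:coercive} alone, and then, instead of running a De Giorgi shrinking and iteration as you sketch, it invokes the weak Harnack inequality of \cite{FeKa13} as a black box to convert the propagated measure information into a pointwise lower bound, and iterates. Both routes are valid under \eqref{eq:coercive}; yours is more self-contained, the paper's is more modular. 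One small remark: \autoref{Lm:energy-eps} is the energy estimate for the test function $\eta^2 u^{-\varepsilon}$ and is not the Caccioppoli inequality for $(u-\ell)_-$ that your De Giorgi iteration needs; you flag an ``analogue'' and this is indeed standard, but it is a separate estimate rather than a variant of \autoref{Lm:energy-eps}.
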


\begin{proof}
    The proof presented in \cite{Lia24b} employed the lower bound in \eqref{eq:Kcomp} because of \cite[Lemma 3.4]{Lia24b}. However, \cite[Lemma 3.3]{Lia24b} continues to hold only assuming the upper bound in \eqref{eq:Kcomp} and \eqref{eq:coercive}. Therefore, starting from the given initial measure theoretical information, one can propagate it, i.e. for some $\varep,\,\dl\in(0,1)$ depending only on the data and $\al$, we have
    \[
    | \{ u(t,\cdot) \ge \varep k \} \cap B_{4R}(x_o)| \ge\tfrac12 4^{-d} \alpha |B_{4R}(x_o)|
    \]
    for all
    \[
    t\in(t_o,t_o+\dl(4R)^{2s}).
    \]
    Based on this measure theoretical estimate, we can apply the weak Harnack inequality which is established under the assumption \eqref{eq:coercive} and the upper bound of \eqref{eq:Kcomp} in~\cite{FeKa13}. Once a pointwise estimate is obtained, we can repeat this argument and further propagate it and reach the desired pointwise estimate, see \cite[\S~4.1]{Lia24b}.
\end{proof}

\subsection{Propagation of positivity in $L^1_{\loc}$}

With the help of the energy decay estimate in \autoref{lemma:energy-decay}, we can establish the following lemma.

\begin{lemma}\label{Lm:inf-L1}
Assume that the kernel $K$ satisfies the upper bound in \eqref{eq:Kcomp} and \eqref{eq:coercive}.
Let $u\ge0$ be a weak supersolution in $(0,4 )\times B_4$ satisfying
\begin{align}
\label{eq:inf-L1-ass}
\int_0^{2} \int_{B_2} u^{q} \d x \d t \le \mathbf{C}
\end{align}
for some $q \in (1,1 + \frac{2s}{d})$ and $\mathbf{C}>1$. Moreover, assume that for some $c_o > 0$ we have
\begin{align}\label{eq:L1-c0}
\int_{B_{\frac12}} u(0,x) \d x \ge c_o.
\end{align}
Then, there exists $\tau \in (0,\frac14)$, depending only on the data $\{d,s,\lm,\Lm\}$, $q$, $c_o$ and $\mathbf{C}$, such that 
\begin{align*}
\inf_{t\in(0, \tau] } \int_{B_1} u(t,x) \d x \ge \tfrac12 c_o.
\end{align*}
\end{lemma}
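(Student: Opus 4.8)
The plan is to control the evolution of the localized mass $m(\tau):=\int_{\R^d}u(\tau,x)\xi(x)\,\d x$, where $\xi\in C_c^\infty(B_{3/4})$ is a fixed cutoff with $0\le\xi\le1$, $\xi\equiv1$ on $B_{1/2}$ and $|\nabla\xi|\le c(d)$. Testing the weak formulation of the supersolution $u$ (after a time mollification) with $\psi_n(t)\xi(x)$, where $\psi_n\to\chi_{[\varepsilon,\tau]}$ and $\psi_n'\to\delta_\varepsilon-\delta_\tau$ exactly as in the proof of \autoref{lemma:weighted-L1-time-insensitive}, and then letting $n\to\infty$ and $\varepsilon\searrow0$, one arrives at
\begin{align*}
\int_{B_1}u(\tau,x)\,\d x\ge m(\tau)\ge m(0)-\int_0^\tau\cE^{(t)}(u(t),\xi)\,\d t\ge c_o-\int_0^\tau\cE^{(t)}(u(t),\xi)\,\d t
\end{align*}
for every $\tau\in(0,\tfrac14)$ for which the last integral is finite; here we used $u\ge0$, $\xi\le\chi_{B_1}$, and $m(0)\ge\int_{B_{1/2}}u(0,x)\,\d x\ge c_o$ since $\xi\equiv1$ on $B_{1/2}$. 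The continuity of $t\mapsto m(t)$ up to $t=0$ implicit here is provided by the standard time mollification recalled in the paper.

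The second step is to bound $\cE^{(t)}(u(t),\xi)$ from above. Writing it as the contribution of $B_1\times B_1$ plus twice that of $B_1\times B_1^c$ (the part over $B_1^c\times B_1^c$ vanishes since $\supp\xi\subset B_{3/4}$), the diagonal part is estimated via $|\xi(x)-\xi(y)|\le c|x-y|$ by
\begin{align*}
\Big|\int_{B_1}\int_{B_1}(u(t,x)-u(t,y))(\xi(x)-\xi(y))K(t;x,y)\,\d y\,\d x\Big|\le c\int_{B_1}\int_{B_1}|u(t,x)-u(t,y)|K(t;x,y)|x-y|\,\d y\,\d x,
\end{align*}
which is precisely the integrand appearing in \autoref{lemma:energy-decay}. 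For the off-diagonal part one discards the nonnegative term $-2\int_{B_1}\int_{B_1^c}u(t,y)\xi(x)K(t;x,y)\,\d y\,\d x$ and uses that $\dist(\supp\xi,B_1^c)\ge\tfrac14$, so $\int_{B_1^c}K(t;x,y)\,\d y\le c(d,s,\Lm)$ for $x\in\supp\xi$, which bounds the off-diagonal part by $c\int_{B_1}u(t,x)\,\d x$. Altogether,
\begin{align*}
\cE^{(t)}(u(t),\xi)\le c\int_{B_1}\int_{B_1}|u(t,x)-u(t,y)|K(t;x,y)|x-y|\,\d y\,\d x+c\int_{B_1}u(t,x)\,\d x.
\end{align*}

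It then remains to integrate over $(0,\tau)$ and show both contributions are small. By \autoref{lemma:energy-decay} (applicable thanks to \eqref{eq:inf-L1-ass}) together with \eqref{eq:inf-L1-ass} again, using $2\tau<\tfrac12<2$,
\begin{align*}
\int_0^\tau\int_{B_1}\int_{B_1}|u(t,x)-u(t,y)|K(t;x,y)|x-y|\,\d y\,\d x\,\d t\le c\,\tau^{1-\frac1q}\Big(\int_0^{2\tau}\int_{B_2}u^q\Big)^{\frac1q}\le c\,\mathbf{C}^{\frac1q}\tau^{1-\frac1q},
\end{align*}
while H\"older's inequality in $x$ and then in $t$, together with \eqref{eq:inf-L1-ass}, gives $\int_0^\tau\int_{B_1}u(t,x)\,\d x\,\d t\le c\,\mathbf{C}^{1/q}\tau^{1-1/q}$. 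Hence $\int_0^\tau\cE^{(t)}(u(t),\xi)\,\d t\le c_\star\mathbf{C}^{1/q}\tau^{1-1/q}$ for some $c_\star=c_\star(d,s,\lm,\Lm,q)$ (in particular this integral is finite, retroactively justifying the display above). Since $1-\tfrac1q>0$, one picks $\tau\in(0,\tfrac14)$ depending only on $d,s,\lm,\Lm,q,c_o,\mathbf{C}$ so that $c_\star\mathbf{C}^{1/q}\tau^{1-1/q}\le\tfrac12 c_o$; as the estimated quantities are nondecreasing in $\tau$, applying them with $\tau$ replaced by any $t\in(0,\tau]$ yields $\int_{B_1}u(t,x)\,\d x\ge c_o-\tfrac12 c_o=\tfrac12 c_o$, which is the claim.

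The \textbf{main difficulty} is the second step: for a mere supersolution $u$ one only has the one-sided energy control of \autoref{lemma:energy-decay}, i.e. the weighted quantity $\int\int|u(x)-u(y)|K|x-y|$ rather than the full Gagliardo seminorm, so the cutoff $\xi$ must be supported strictly inside $B_1$ to render the far-field part of $\cE^{(t)}(u(t),\xi)$ harmless, while the near-diagonal part has to be matched precisely with that weighted energy. A secondary and routine point is the continuity of $t\mapsto m(t)$ at $t=0$, handled by the time mollification already employed throughout the paper.
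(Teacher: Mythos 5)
Your proof is correct and essentially identical to the paper's: you test with a cutoff supported strictly inside $B_1$, split $\cE^{(t)}(u,\xi)$ into a near-diagonal part controlled by \autoref{lemma:energy-decay} and an off-diagonal part that, after dropping the nonpositive piece $-2\int u(y)\xi(x)K$ (which you inadvertently call "nonnegative"), reduces to $\int_{B_1}u\,\d x$, and then apply H\"older's inequality and \eqref{eq:inf-L1-ass} to get a $\tau^{1-1/q}$ bound. The only differences are cosmetic: the paper uses a cutoff supported in $B_{2/3}$ instead of $B_{3/4}$, and regularizes $u$ by adding $\delta>0$ at an earlier stage rather than invoking a time-cutoff $\psi_n$; both yield the same estimate and the same choice of $\tau$.
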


\begin{proof}
Take a cutoff function $\phi$ supported in $B_{\frac23}$ with $\phi \equiv 1$ in $B_{\frac12}$. Testing the weak formulation for $u$ with $\phi$, we obtain for any $\tau \in (0,1)$ that
\begin{align*}
\int_{B_1} u(\tau,x) \phi(x) \d x &\ge \int_{B_1} u(0,x) \phi(x) \d x\\ 
&\qquad - \int_0^\tau \int_{\R^d} \int_{\R^d} (u(t,x) - u(t,y))(\phi(x) - \phi(y)) K(t;x,y) \d y \d x \d t \\
&\ge  c_o - I.
\end{align*} 
Here, we used the assumption \eqref{eq:L1-c0} to insert $c_o$ and denoted by $I$ the second term on the right-hand side. Let us estimate $I$ from above. First, relying \eqref{eq:inf-L1-ass}, we resort to \autoref{lemma:energy-decay} and obtain that
\begin{align*}
\int_0^{\tau} & \int_{B_{1}} \int_{B_{1}} (u(t,x) - u(t,y))(\phi(x) - \phi(y)) K(t;x,y) \d y \d x \d t \\
& \le c \int_0^{\tau} \int_{B_{1}} \int_{B_{1}}  |u(t,x) - u(t,y)| K(t;x,y) |x-y| \d y \d x \d t \le c\, \tau^{1-\frac1q}
\end{align*}
for some $c=c(d,s,\lm,\Lm,q,\mathbf{C})$.
Moreover, since $u \ge 0$ we have
\begin{align*}
\int_0^{\tau} & \int_{B_{\frac23}} \int_{  B^c_{1}} (u(t,x) - u(t,y))(\phi(x) - \phi(y)) K(t;x,y) \d y \d x \d t \\
&= \int_0^{\tau} \int_{B_{\frac23}} \int_{ B^c_{1}} (u(t,x) - u(t,y)) \phi(x) K(t;x,y) \d y \d x \d t \\
&\le \int_0^{\tau} \int_{B_{\frac23}} u(t,x) \phi(x) \left(  \int_{ B^c_{1}} K(t;x,y) \d y \right) \d x \d t \\
&\le c \int_0^{\tau} \int_{B_{1}} u(t,x) \d x \d t \\
&\le c\, \tau^{1-\frac1q} \left(\int_0^1 \int_{B_1} u^{q}(t,x) \d x \d t \right)^{\frac{1}{q}} \le c \, \tau^{1-\frac{1}{q}}
\end{align*}
for some $c=c(d,s,\lm,\Lm,q,\mathbf{C})$, where we used the upper bound in \eqref{eq:Kcomp}. Moreover, we used H\"older's inequality in the last step, and applied \eqref{eq:inf-L1-ass}. Altogether, we deduce that
\begin{align*}
\int_{B_1} u(\tau,x) \phi(x) \d x 
&\ge  c_o - c \, \tau^{1-\frac{1}{q}}\ge\tfrac12 c_o,
\end{align*} 
provided we choose $\tau\le(\frac{c_o}{2c})^{\frac{q}{q-1}}$.
\end{proof}

\subsection{A pointwise positivity estimate}

In this section, we combine what has been obtained previously and obtain a weak Harnack type estimate assuming an $L^\sig$--{\it a priori} bound.
\begin{lemma}\label{Lm:2nd-alt}
Assume that the kernel $K$ satisfies the upper bound in \eqref{eq:Kcomp} and \eqref{eq:coercive}.
Let $u\ge0$ be a weak supersolution in $(0,8) \times B_8$. Assume that there exist constants $\sig\in(0,1)$ and $c_o,\,c_1>0$, such that
\begin{equation}\label{L-sig-condition}
    \int_0^{2}\int_{B_2} u^{\sigma}(t,x)\,\d x \d t\le c_1
\end{equation}
and
\begin{equation}\label{L-1-condition}
    \int_{B_1}u(0,x)\d x\ge c_o.
\end{equation}
Then, there exists $\eta\in(0,1)$ depending only on the data $\{d,s,\lm,\Lm\}$, $\sig$, $q$, $c_o$ and $c_1$, such that
\begin{equation*}
    u\ge \eta\quad\text{a.e. in}\quad [2,8]\times B_1.
\end{equation*}
\end{lemma}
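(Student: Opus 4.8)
The plan is to combine the three preparatory results already at hand: the reverse Hölder inequality (\autoref{Lm:Lq-Lsig}), the $L^1_{\loc}$-propagation lemma (\autoref{Lm:inf-L1}), and the expansion of positivity (\autoref{lemma:expansion-of-positivity}). First I would upgrade the $L^\sigma$ bound \eqref{L-sig-condition} to an $L^q$ bound: fix $q\in(1,1+\tfrac{2s}{d})$, apply \autoref{Lm:Lq-Lsig} (after a harmless translation/scaling so that the cylinder $(0,\tfrac12)\times B_{\frac12}$ sits inside $(0,2)\times B_2$ — note the lemma is stated on $(0,4)\times B_4$, so one covers $(0,2)\times B_2$ by finitely many translated-and-dilated copies of the reference configuration) to conclude
\begin{align*}
\int_0^{1/2}\int_{B_{1/2}} u^q \,\d x\,\d t \le c\Big(\int_0^{2}\int_{B_2} u^{\sigma}\,\d x\,\d t\Big)^{q/\sigma} \le c\, c_1^{q/\sigma} =: \mathbf{C}.
\end{align*}
A short covering argument then gives the same $L^q$ bound on all of $(0,2)\times B_2$ (shrinking radii as needed), which is exactly the quantitative input \eqref{eq:inf-L1-ass} required by \autoref{Lm:inf-L1}.

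Next I would feed \eqref{L-1-condition} and the just-obtained $L^q$ bound into \autoref{Lm:inf-L1} to obtain a time $\tau\in(0,\tfrac14)$, depending only on the data, $\sigma$, $q$, $c_o$, $c_1$, such that $\int_{B_1} u(t,x)\,\d x\ge \tfrac12 c_o$ for every $t\in(0,\tau]$. The point of this step is to convert an initial $L^1$ bound into a \emph{sustained} $L^1$ bound on a whole time strip. From the sustained $L^1$ bound I extract measure-theoretic information: for each fixed $t\in(0,\tau]$, since $\int_{B_1} u(t,\cdot)\ge\tfrac12 c_o$ and (by the $L^q$ bound and Hölder, or directly) $u(t,\cdot)$ is not too large on average, there is $k=k(c_o,\dots)>0$ and $\alpha=\alpha(c_o,\dots)\in(0,1]$ with $|\{u(t,\cdot)\ge k\}\cap B_1|\ge\alpha|B_1|$ for a.e. $t\in(0,\tau]$. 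Actually it is cleaner to avoid the pointwise-in-$t$ size bound: pick any $t_1\in(0,\tau]$ and use $\int_{B_1}u(t_1,\cdot)\ge\tfrac12 c_o$ together with a crude upper bound $\esssup$ coming from local boundedness of subsolutions is not available for supersolutions, so instead I would argue: the set where $u(t_1,\cdot)<k$ contributes at most $k|B_1|$ to the integral, hence $\int_{\{u(t_1,\cdot)\ge k\}\cap B_1} u(t_1,\cdot)\,\d x\ge\tfrac12 c_o-k|B_1|$, and one still needs an upper bound on this integral to extract a measure lower bound. That upper bound does follow from the $L^q$ estimate: $\int_0^{2}\int_{B_2}u\le c\,\mathbf{C}^{1/q}$, so on a set $G\subset(0,\tau]$ of positive measure one has $\int_{B_2}u(t_1,\cdot)\le c\,\mathbf{C}^{1/q}/|G|$; choosing $k=c_o/(4|B_1|)$ then yields $|\{u(t_1,\cdot)\ge k\}\cap B_1|\ge\alpha|B_1|$ with $\alpha$ quantitative, for some $t_1$ in that good set.

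Finally, with such a $t_1$ and the measure density $|\{u(t_1,\cdot)\ge k\}\cap B_1(x_o)|\ge\alpha|B_1(x_o)|$ in hand (for appropriate center, by covering $B_1$ by balls of radius $R\le\tfrac14$), I apply \autoref{lemma:expansion-of-positivity} with that $R$: it produces $\eta_0\in(0,1)$, $p>1$ depending only on the data, such that $u\ge\eta_0\alpha^p k$ on $(t_1+R^{2s},t_1+4R^{2s})\times B_{2R}(x_o)$. Iterating this expansion finitely many times (chaining in time with steps of length comparable to $R^{2s}$, and using a standard covering of $B_1$ in space) propagates the pointwise lower bound forward up to time $8$ and spatially over $B_1$; since $t_1\le\tfrac14$ and each expansion step advances time by a fixed fraction, a number of iterations bounded in terms of the data suffices, and the resulting constant $\eta$ depends only on the data, $\sigma$, $q$, $c_o$, $c_1$. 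The main obstacle I anticipate is the bookkeeping in this final step: one must verify the supersolution property holds on all the cylinders used in the expansion-of-positivity iteration (which it does, since $u$ is a supersolution on $(0,8)\times B_8$ and all cylinders stay inside), and one must track how $\alpha$ and $k$ degrade — but since there are only finitely many, data-controlled iterations, the constants stay quantitative. The other delicate point is the extraction of the measure-density estimate from the $L^1$ bound without an a priori $L^\infty$ bound, which is why routing the size control through the $L^q$ estimate (rather than through boundedness) is essential.
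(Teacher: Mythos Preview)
Your overall plan is the paper's plan: upgrade $L^\sigma$ to $L^q$ via \autoref{Lm:Lq-Lsig}, propagate the $L^1$ lower bound in time via \autoref{Lm:inf-L1}, extract a measure-density condition at a good time slice, and feed this into \autoref{lemma:expansion-of-positivity} with finitely many iterations. However, the measure-extraction step as you wrote it has a genuine gap. You argue: from $\int_{\{u(t_1,\cdot)\ge k\}\cap B_1} u\ge \tfrac14 c_o$ together with a slice $L^1$ upper bound $\int_{B_2}u(t_1,\cdot)\le C$, one obtains $|\{u(t_1,\cdot)\ge k\}\cap B_1|\ge\alpha|B_1|$. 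This is false: a slice $L^1$ upper bound does not prevent $u(t_1,\cdot)$ from concentrating all of its mass on a set of arbitrarily small measure. The H\"older step that produces the measure factor $|\cdot|^{1-1/q}$ requires a slice $L^q$ bound, not a slice $L^1$ bound. Your closing remark that routing size control through $L^q$ ``is essential'' is correct, but the explicit argument you outline uses $L^1$ and therefore does not work.

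The fix is exactly what the paper does: since $\int_0^1\int_{B_1}u^q\le\mathbf{C}$ and the $L^1$ lower bound holds for \emph{every} $t\in(0,\tau]$, a mean-value argument produces $t_*\in(0,\tau]$ at which \emph{both} $\int_{B_1}u^q(t_*,\cdot)\le \mathbf{C}/\tau$ and $\int_{B_1}u(t_*,\cdot)\ge\tfrac12 c_o$ hold. Then H\"older with exponent $q$ gives
\[
\tfrac12 c_o\le \Big(\int_{B_1}u^q(t_*,\cdot)\Big)^{1/q}\big|\{u(t_*,\cdot)\ge\xi\}\cap B_1\big|^{1-1/q}+\xi|B_1|,
\]
and with $\xi=c_o/(4|B_1|)$ the measure lower bound follows. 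Two further simplifications: in step~1 a single dilation of \autoref{Lm:Lq-Lsig} (by a factor~2) already yields the $L^q$ bound on $(0,1)\times B_1$, so no covering is needed; and in the final step you can apply \autoref{lemma:expansion-of-positivity} directly at radius $R=1$ and center $0$ (the required cylinder $(t_*,t_*+4)\times B_4$ sits inside $(0,8)\times B_8$), avoiding the small-ball covering of $B_1$.
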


\begin{proof}
From \autoref{Lm:Lq-Lsig} and condition~\eqref{L-sig-condition} we obtain that for any $q\in(1,1+\frac{2s}d)$,
\begin{align*}
    \int_{0}^{1} \int_{B_{1}} u^q(t,x) \d x \d t\le \mathbf{C},
\end{align*}
where $\mathbf{C}$ depends only on the data $\{d,s,\lm,\Lm\}$, $\sig$, $q$ and $c_1$. On the other hand, according to \autoref{Lm:inf-L1} and \eqref{L-1-condition} there exists some $\tau\in(0,\frac14)$, depending on $\{d,s,\lm,\Lm\}$, $q$, $c_o$ and $\mathbf{C}$, such that
\begin{align*}
    \inf_{t\in[0,\tau]}\int_{B_1}u(t,x)\d x\ge\tfrac{1}{2} c_o.
\end{align*}
As a consequence of the last two estimates, there must be some $t_*\in[0,\tau]$, such that
\[
\int_{B_{1}}u^q(t_*,x)\d x\d t\le \mathbf{C}\quad\text{and}\quad \int_{B_1} u(t_*,x)\d x \ge\tfrac12 c_o.
\]
Next, based on the above display, for some $\xi>0$ to be fixed, we can compute by H\"older's inequality
    \begin{align*}
       \tfrac12 c_o&\le \int_{B_1}u(t_*,x)\d x=\int_{B_1}u(t_*,x)\chi_{\{u\ge \xi\}}\d x+\int_{B_1}u(t_*,x)\chi_{\{u< \xi\}}\d x\\
        &\le \bigg(\int_{B_1}u^q(t_*,x)\d x\bigg)^{\frac1q}|\{u(t_*,\cdot)\ge\xi\}\cap B_1|^{1-\frac1q}+\xi |B_1|\\
        &\le \mathbf{C} |\{u(t_*,\cdot)\ge\xi\}\cap B_1|^{1-\frac1q}+\xi |B_1|.
    \end{align*}
Let us choose $\xi=\frac{c_o}{4|B_1|}$ in the previous estimate. Then, we arrive at
\[
|\{u(t_*,\cdot)\ge\xi\}\cap B_1|\ge \frac{c_o}{4\mathbf{C}}.
\]
Now, we use this measure estimate and apply the expansion of positivity in \autoref{lemma:expansion-of-positivity} to conclude that
\[
u\ge\eta\quad\text{a.e. in}\quad [t_*+1, t_*+2]\times B_2
\]
for some $\eta$ depending only on the data $\{d,s,\lm,\Lm\}$,  $c_o$ and $\mathbf{C}$.
Note that $t_*\in[0,\frac14]$, and hence the above estimate actually yields that
\[
u\ge\eta \quad\text{a.e. in}\quad [\tfrac54, 2]\times B_2.
\]
Once pointwise estimate is obtained, we can further apply the expansion of positivity to propagate it up to the top of the domain.
This finishes the proof.
\end{proof}

\subsection{Proof of the improved weak Harnack inequality}
We are now in a position to conclude the proof of \autoref{thm:improved-weak-Harnack}.
Assume without loss of generality that the supremum on the left-hand side is taken at $t_o$. Moreover, let us stipulate $(t_o,x_o) = (0,0)$, $R = 1$, and $\|u(t_o,\cdot)\|_{L^1(B_R(x_o))}=1$.
Otherwise, we consider the rescaled function
\[
v(t,x)=\frac{u(R^{2s} (t-t_o), R (x-x_o))}{\|u(t_o, \cdot)\|_{L^1(B_R(x_o))}},
\]
which is a weak supersolution to \eqref{Eq:1:1} in $(0,8)\times B_8$.

Let $p>1$ be the number determined in \autoref{lemma:expansion-of-positivity}. Now, we consider two alternatives: either there exist $t_1\in[0,1]$ and $k>1$, such that
\[
|\{u(t_1,\cdot)> k\}\cap B_2|\ge k^{-\frac1{p+1}}|B_2|,
\]
or for any $t\in[0,1]$ and any $k>1$, we have
\[
|\{u(t,\cdot)> k \}\cap B_2|< k^{-\frac1{p+1}}|B_2|.
\]

If the first alternative occurs, we apply the expansion of positivity in \autoref{lemma:expansion-of-positivity} and obtain
\begin{equation*}
    u\ge \eta k^{\frac1{p+1}}>\eta\quad\text{a.e. in}\quad (t_1 + 1, t_1 + 4)\times B_2
\end{equation*}
for some $\eta$ depending on the data $\{d,s,\lm,\Lm\}$.
For any $t_1\in[0,1]$, we can always conclude from the last line that 
\begin{equation*}
    u\ge \eta \quad\text{a.e. in}\quad (2,  4)\times B_2.
\end{equation*}
Once this pointwise estimate is obtained, we can keep using the expansion of positivity and further propagate it up to the top of the domain, which allows us to conclude the proof.

It remains to deal with the second alternative. To begin with, let $\sig:=\frac1{2p}$ and observe that for any $t\in[0,1]$,
\begin{align*}
    \int_{B_1} u^\sig(t,x)\d x
    &=\int_{0}^{\infty} |\{u(t,\cdot)>k\}\cap B_1| \d k^\sig\\
    &= \int_{1}^{\infty} |\{u(t, \cdot)>k\}\cap B_1| \d k^\sig+\int_{0}^{1} |\{u(t, \cdot)>k\}\cap B_1| \d k^\sig\\
    &\le |B_2| \int_1^{\infty}k^{-\frac{1}{p+1}}\d k^\sig+|B_1|\\
    &\le c+|B_1|
\end{align*}
for some $c = c(d,p) > 0$, where we used that $\frac{1}{\sigma(p+1)} = \frac{2p}{p+1} > 1$ since $p > 1$.
This means \eqref{L-sig-condition} of \autoref{Lm:2nd-alt} is fulfilled with $c_1=c+|B_1|$. At the same time, \eqref{L-1-condition} is granted with $c_o=1$. Therefore, \autoref{Lm:2nd-alt} yields some $\eta\in(0,1)$ depending only on the data $\{d,s,\lm,\Lm\}$, such that
\begin{equation*}
    u\ge\eta\quad\text{a.e. in}\quad (2,8)\times B_1.
\end{equation*}
Therefore, no matter which alternative occurs, we arrive at a similar pointwise estimate.
The proof is complete after reverting to the original solution.

\appendix
\section{Representation of solution}
\label{sec:appendix}

This appendix deals with solutions to the Cauchy problem
\begin{align}
\label{eq:inhom-Cauchy}
    \begin{cases}
        \partial_t u - \mathcal{L}_t u &= 0 ~~ \text{ in } (\eta,T) \times \R^d,\\
        u(\eta) &= f ~~ \text{ in } \R^d,
    \end{cases}
\end{align}
where $\mathcal{L}_t$ is a time-dependent operator \eqref{eq:op}, $f \in L^2(\R^d)$ and $\eta \in [0,T)$. Analogously, for $\xi\in(0,T]$ we also consider the dual problem
\begin{align}
\label{eq:dual-inhom-Cauchy}
    \begin{cases}
        \partial_t u + \mathcal{L}_t u &= 0 ~~ \text{ in } (0,\xi) \times \R^d,\\
        u(\xi) &= f ~~ \text{ in } \R^d.
    \end{cases}
\end{align}
In particular, we establish the existence of the fundamental solutions together with some of their key properties and representations of solution to both of these Cauchy problems. Previously, this was only known for time-independent operators via a semigroup or a probability approach. Here, we provide an entirely analytic argument.

\begin{proposition}
    \label{prop:representation}
    Assume that the kernel $K$ satisfies the upper bound in \eqref{eq:Kcomp} and \eqref{eq:coercive}. Let $\eta \in [0,T)$. Then, there exists a function $(t,x,y) \mapsto p_{\eta,t}(x,y)$, such that for any $f \in L^2(\R^d)$ the function
    \begin{align}
    \label{eq:representation}
        (t,x) \mapsto \int_{\R^d} p_{\eta,t}(x,y) f(y) \d y
    \end{align}
    is the unique solution to \eqref{eq:inhom-Cauchy} in the sense of \autoref{def:Cauchy} and attaining its initial datum in the sense of $L^2_{\loc}(\R^d)$. 
    Moreover, it holds for any $x,y \in \R^d$ and $0 \le \eta < \tau < t < T$ that
    \begin{align}
    \label{eq:p-properties}
        p_{\eta,t}(x,y) \ge 0,  ~~ \int_{\R^d} p_{\eta,t}(x,y) \d y \le 1, ~~ p_{\eta,t}(x,y) = \int_{\R^d} p_{\tau,t}(x,z) p_{\eta,\tau}(z,y) \d z.
    \end{align}
   
   Likewise, for $0<t<\xi\le T$ there exists a function $(t,x,y) \mapsto \hat{p}_{\xi,t}(x,y)$, such that for any $f \in L^2(\R^d)$ the function
        \begin{align}
    \label{eq:dual-representation}
        (t,x) \mapsto \int_{\R^d} \hat{p}_{\xi,t}(x,y) f(y) \d y
    \end{align}
    solves the dual problem \eqref{eq:dual-inhom-Cauchy}. Moreover,  the following symmetry holds
    \begin{align}
    \label{eq:dual-relation}
        p_{\eta,t}(x,y) = \hat{p}_{t,\eta}(y,x) \qquad \forall\, x,y \in \R^d, ~~ \forall\, 0 \le \eta < t \le T.
    \end{align}
\end{proposition}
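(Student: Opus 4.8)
The plan is to construct $p_{\eta,t}$ as the integral kernel of the solution operator of the Cauchy problem via ultracontractivity, and then to read off every listed property from abstract evolution identities, the maximum principle, and the interior regularity theory. \textbf{Well-posedness and the evolution family.} By the upper bound in \eqref{eq:Kcomp} and Cauchy--Schwarz, $\cE^{(t)}$ is bounded on $H^s(\R^d)\times H^s(\R^d)$; letting $r\to\infty$ in \eqref{eq:coercive} (monotone convergence on the right, $K\ge 0$ on the left) gives $\cE^{(t)}(v,v)\ge\lm[v]_{H^s(\R^d)}^2$, hence the G{\aa}rding inequality $\cE^{(t)}(v,v)+\|v\|_{L^2(\R^d)}^2\ge c\,\|v\|_{H^s(\R^d)}^2$. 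The Galerkin method (equivalently the Lions--Magenes theory for time-dependent coercive forms) then produces, for each $f\in L^2(\R^d)$, a weak solution $u\in L^2((\eta,T);H^s(\R^d))\cap C([\eta,T];L^2(\R^d))$ of \eqref{eq:inhom-Cauchy} with $u(\eta)=f$ and $\partial_t u\in L^2((\eta,T);H^{-s}(\R^d))$; uniqueness in this class follows by testing the equation for a difference of two solutions with the difference itself, after the time mollification of the remark following \autoref{Def:global-sol}. Testing with $u$ gives $\tfrac12\tfrac{d}{dt}\|u(t)\|_{L^2}^2=-\cE^{(t)}(u(t),u(t))\le 0$, so $S_{\eta,t}\colon f\mapsto u(t)$ is a linear contraction on $L^2(\R^d)$, and uniqueness forces $S_{\tau,t}\circ S_{\eta,\tau}=S_{\eta,t}$ for $\eta<\tau<t$. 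Since $K(s;\cdot,\cdot)$ is symmetric, integrating $\tfrac{d}{ds}\int_{\R^d}u(s)\bar v(s)\,\d x=\cE^{(s)}(u(s),\bar v(s))-\cE^{(s)}(\bar v(s),u(s))=0$ — where $u$ solves the forward problem on $(\eta,t)$ and $\bar v$ the dual problem with terminal datum $\bar v(t)=g$ — shows $\langle S_{\eta,t}f,g\rangle=\langle f,\bar v(\eta)\rangle$, so the dual solution operator $g\mapsto\bar v(\eta)$ is the adjoint $S_{\eta,t}^\ast$; after the time reversal $w(s,x):=\bar v(\xi-s,x)$ it solves a forward problem for $\tilde{\mathcal{L}}_s=\mathcal{L}_{\xi-s}$, whose kernel again satisfies our hypotheses, hence it is of exactly the same type.

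\textbf{Ultracontractivity and the kernel.} Combining $\cE^{(t)}(v,v)\ge\lm[v]_{H^s(\R^d)}^2$ with the fractional Nash inequality $\|v\|_{L^2}^{2+4s/d}\le c\,[v]_{H^s(\R^d)}^2\|v\|_{L^1}^{4s/d}$ and with $L^1$-non-expansivity of $S_{\eta,t}$ (a consequence of positivity preservation and the mass bound below), the differential inequality $\tfrac{d}{dt}\|S_{\eta,t}f\|_{L^2}^2\le -c\,\|S_{\eta,t}f\|_{L^2}^{2+4s/d}\|f\|_{L^1}^{-4s/d}$ integrates to $\|S_{\eta,t}\|_{L^1\to L^2}\le c\,(t-\eta)^{-d/4s}$. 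By the previous paragraph $S_{\eta,t}^\ast$ is of the same type, so also $\|S_{\eta,t}\|_{L^2\to L^\infty}=\|S_{\eta,t}^\ast\|_{L^1\to L^2}\le c\,(t-\eta)^{-d/4s}$; composing at the midpoint $\tau=\tfrac12(\eta+t)$ gives $\|S_{\eta,t}\|_{L^1\to L^\infty}\le c\,(t-\eta)^{-d/2s}$. An operator bounded from $L^1$ to $L^\infty$ is integration against a kernel $p_{\eta,t}\in L^\infty(\R^d\times\R^d)$, which establishes \eqref{eq:representation}; the interior Hölder estimates for weak solutions (\autoref{Thm:mod-con}, applied to $p_{\eta,t}(\cdot,y)$ and, through the dual construction, to $p_{\eta,t}(x,\cdot)$) furnish a jointly continuous representative, so that the remaining pointwise identities make sense.

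\textbf{Positivity, mass bound, Chapman--Kolmogorov, initial trace, and symmetry.} Positivity is the maximum principle: for $f\ge 0$, testing the equation for $u=S_{\eta,t}f$ with $-u_-$ and using $\cE^{(t)}(u,-u_-)\ge[u_-]_{H^s(\R^d)}^2\ge 0$ forces $u_-\equiv 0$. For the mass bound, $v\equiv 1$ solves \eqref{eq:inhom-Cauchy} (the operator annihilates constants), so comparison with the solution started from $\mathbf 1_{B_R}\le 1$ gives $S_{\eta,t}\mathbf 1_{B_R}\le 1$, and $\int_{\R^d}p_{\eta,t}(x,y)\,\d y=\lim_{R\to\infty}S_{\eta,t}\mathbf 1_{B_R}(x)\le 1$. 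The Chapman--Kolmogorov identity is $S_{\tau,t}\circ S_{\eta,\tau}=S_{\eta,t}$ written with kernels and Fubini. Since $u\in C([\eta,T];L^2(\R^d))$, $u(t)\to f$ in $L^2(\R^d)$, hence in $L^2_{\loc}(\R^d)$. Applying the construction to $\tilde{\mathcal{L}}_s=\mathcal{L}_{\xi-s}$ and reversing time produces $\hat p_{\xi,t}$ and \eqref{eq:dual-representation}; writing $\langle S_{\eta,t}f,g\rangle=\langle f,\bar v(\eta)\rangle$ with both operators expressed through their kernels, and letting $f,g$ range over $L^2(\R^d)$, yields $p_{\eta,t}(x,y)=\hat p_{t,\eta}(y,x)$ a.e., hence everywhere by continuity.

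\textbf{Main obstacle.} The heart of the argument is the ultracontractivity step: deriving the $L^1\!\to\!L^2\!\to\!L^\infty$ smoothing — precisely what \eqref{eq:coercive} is designed to allow — and then upgrading the merely $L^\infty$ kernel to a continuous one via the De Giorgi--Nash--Moser regularity theory so that all the stated identities hold for every pair $(x,y)$. A secondary technical point is the rigorous justification of the maximum-principle comparisons used for the non-integrable datum $\mathbf 1$, which must be carried out through the exhaustion $\mathbf 1_{B_R}\nearrow\mathbf 1$.
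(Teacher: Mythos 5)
Your proposal takes a genuinely different route from the paper. You build an abstract evolution family via the Lions theory on the Gelfand triple $H^s(\R^d)\hookrightarrow L^2(\R^d)\hookrightarrow H^{-s}(\R^d)$ (using the full-space coercivity obtained by letting $r\to\infty$ in \eqref{eq:coercive}), obtain the kernel by Nash-inequality ultracontractivity together with Dunford--Pettis, and read off the symmetry \eqref{eq:dual-relation} from the abstract duality identity $\tfrac{d}{ds}\int u(s)\bar v(s)\,\d x=0$, which is legitimate precisely because the Lions solution satisfies $\partial_t u,\partial_t\bar v\in L^2(I;H^{-s}(\R^d))$. The paper instead constructs $P_{\eta,t}f$ by monotone exhaustion through Dirichlet problems on $B_n$, extracts the kernel via Radon--Nikodym from the $L^2\to L^\infty$ bound that follows from \autoref{Prop:bd}, and proves the symmetry by the cutoff argument of \autoref{lemma:weighted-L1-time-insensitive}, showing $\mathbf{I}_R,\mathbf{II}_R\to 0$ --- by far the most labor-intensive part of the appendix. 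Your route is considerably shorter for the symmetry, and your uniqueness argument is equally valid; the paper's exhaustion, on the other hand, makes the maximum-principle steps trivial because they are inherited from the bounded-domain problems.

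There is, however, a genuine gap in the sub-Markovianity step, and it is load-bearing: $L^1$-non-expansivity (hence the Nash argument, hence the existence of the kernel) depends on it, so it must be established \emph{before} the ultracontractivity paragraph, not cited there as ``the mass bound below''. You justify $S_{\eta,t}\mathbf 1_{B_R}\le 1$ by comparison with $v\equiv 1$, but the constant is not in $L^2(\R^d)$ and lies outside the admissible class of the Lions framework and of \autoref{def:Cauchy}; the exhaustion $\mathbf 1_{B_R}\nearrow\mathbf 1$ that you flag as the fix only lets you pass to the limit once $S_{\eta,t}\mathbf 1_{B_R}\le 1$ is already known, so it does not supply the missing comparison. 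The paper avoids this cleanly via \autoref{Rmk:max-pri}, because the Dirichlet solutions $u_n$ on $B_n$ obey the classical weak maximum principle. In your abstract framework the correct repair is to test the Lions solution directly with $(u-1)_+$: one checks $(u-1)_+\le u_+\in L^2(\R^d)$, $[(u-1)_+]_{H^s(\R^d)}\le[u]_{H^s(\R^d)}$, and $\cE^{(t)}(u,(u-1)_+)=\cE^{(t)}(u-1,(u-1)_+)\ge 0$ (since $g\mapsto g_+$ is monotone and constants annihilate the form), whence $(u-1)_+\equiv 0$. With this step inserted in the right place, your chain closes.
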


Note that \eqref{eq:p-properties} and \eqref{eq:dual-relation} are required in order to apply the results in \cite{KaWe23}.

\subsection{Existence and uniqueness for Cauchy problems}
The proof of \autoref{prop:representation} goes in several steps. First, we establish the existence and uniqueness of a weak solution to \eqref{eq:inhom-Cauchy}.

\begin{lemma}
\label{lemma:ex-sg}
    Assume that the hypothesis of \autoref{prop:representation} holds. For any $f \in L^2(\R^d)$, there exists a unique weak solution to the Cauchy problem \eqref{eq:inhom-Cauchy} in the sense of \autoref{def:Cauchy}. 
    The solution is denoted by $(t,x) \mapsto P_{\eta,t}f(x)$.
\end{lemma}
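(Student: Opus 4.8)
The plan is to establish \autoref{lemma:ex-sg} by the standard Galerkin approximation method adapted to the nonlocal setting, following the classical Lions--Magnes framework for parabolic equations. First I would set up the functional-analytic triple $H^s(\R^d) \hookrightarrow L^2(\R^d) \hookrightarrow (H^s(\R^d))^*$ and observe that, by the upper bound in \eqref{eq:Kcomp} together with the coercivity condition \eqref{eq:coercive}, the time-dependent bilinear form $\cE^{(t)}(\cdot,\cdot)$ satisfies a G\r{a}rding-type inequality: there exist $\Lambda' > 0$ and $\lambda' > 0$ such that $\cE^{(t)}(v,v) \ge \lambda' [v]_{H^s(\R^d)}^2 - 0 \cdot \|v\|_{L^2}^2$ (in fact the Gagliardo seminorm is controlled directly, since the global coercivity follows by summing \eqref{eq:coercive} over a covering, or one simply uses that the full-space energy dominates the seminorm up to the upper bound), and the continuity bound $|\cE^{(t)}(u,v)| \le \Lambda' [u]_{H^s(\R^d)}[v]_{H^s(\R^d)}$. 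The measurability of $t \mapsto \cE^{(t)}(u,v)$ follows from the measurability of $K$ in $t$.

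Next I would construct approximate solutions $u_m(t) = \sum_{j=1}^m c_j^m(t) w_j$ in the span of a fixed orthonormal basis $\{w_j\}$ of $L^2(\R^d)$ consisting of elements of $H^s(\R^d)$ (for instance eigenfunctions of a suitable compact resolvent, or simply a countable dense family orthonormalized), solving the finite-dimensional linear ODE system obtained by testing \eqref{eq:inhom-Cauchy} against each $w_j$, with initial data $u_m(\eta)$ the $L^2$-projection of $f$ onto $\mathrm{span}\{w_1,\dots,w_m\}$. Carath\'eodory's existence theorem gives $c_j^m \in W^{1,1}$. Testing the ODE system with $u_m$ itself and using the G\r{a}rding inequality yields the a priori energy estimate
\begin{align*}
    \sup_{t \in (\eta,T)} \|u_m(t)\|_{L^2(\R^d)}^2 + \int_\eta^T [u_m(t)]_{H^s(\R^d)}^2 \d t \le C \|f\|_{L^2(\R^d)}^2,
\end{align*}
uniformly in $m$, which also bounds $\partial_t u_m$ in $L^2((\eta,T); (H^s(\R^d))^*)$. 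By weak/weak-$*$ compactness I extract a subsequence converging to some $u \in L^\infty((\eta,T);L^2(\R^d)) \cap L^2((\eta,T);H^s(\R^d))$ with $\partial_t u \in L^2((\eta,T);(H^s)^*)$; passing to the limit in the weak formulation (the bilinear term passes to the limit by linearity and the uniform bound, testing against fixed finite combinations $\zeta(t) w_j$ and then using density) shows $u$ is a global weak solution in the sense of \autoref{Def:global-sol}. The Aubin--Lions lemma gives strong $L^2_{\loc}$ convergence, so $u \in C([\eta,T];L^2(\R^d))$ after modification on a null set and $u(\eta) = f$; in particular $u(t) \to f$ in $L^2_{\loc}(\R^d)$ as $t \searrow \eta$, so $u$ solves the Cauchy problem in the sense of \autoref{def:Cauchy}. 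I would define $P_{\eta,t}f$ to be this solution evaluated at time $t$.

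For uniqueness, suppose $u_1, u_2$ are two weak solutions with the same datum; then $w = u_1 - u_2$ is a global weak solution with $w(t) \to 0$ in $L^2_{\loc}$ as $t \searrow \eta$. Testing the equation for $w$ with $w$ itself (rigorously via the Steklov/time-mollification argument already invoked in the remark after \autoref{Def:global-sol}, localized with a spatial cutoff $\xi_R$ and then sending $R \to \infty$ using the $L^1(\R^d;\mu)$-control and the energy bound to kill the cutoff error terms — exactly in the spirit of the estimates $\mathbf{I}_R, \mathbf{II}_R \to 0$ in \autoref{lemma:I1} and \autoref{lemma:I2}) yields $\tfrac{1}{2}\tfrac{d}{dt}\|w(t)\|_{L^2(\R^d)}^2 + \cE^{(t)}(w(t),w(t)) \le 0$, hence by G\r{a}rding and Gr\"onwall $\|w(t)\|_{L^2(\R^d)} \le 0$, so $w \equiv 0$. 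The main obstacle I anticipate is the rigorous handling of the test-function issue on the whole space: neither $u$ nor $w$ is assumed to lie in $L^2(\R^d)$ uniformly in time in the \emph{a priori} class of \autoref{Def:global-sol}, so testing with $w$ directly is not immediately licit — this is precisely why the global solution concept is delicate — and one must either work within the better class produced by the Galerkin construction (where the uniform $L^2$ bound \emph{is} available) or carefully localize and pass to the limit, controlling the tail/cutoff contributions via the weighted $L^1$ membership that is part of the solution definition. Once that localization is set up the rest is routine.
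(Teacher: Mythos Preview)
Your approach is correct but differs from the paper's in the existence step. The paper does not use Galerkin approximation on $\R^d$; instead it solves the Dirichlet problem on an exhausting sequence of balls $B_n$ with initial data $f\chi_{B_n}$ and zero exterior data (existence for these is taken from \cite{FKV15}), obtains the uniform energy bound $\sup_t \|u_n(t)\|_{L^2}^2 + 2\int \cE^{(t)}(u_n,u_n) \le \|f\|_{L^2}^2$ directly from testing with $u_n$, and extracts a limit by monotonicity (via the comparison principle) together with weak compactness. The attainment of the initial datum is handled by testing the equation for $u_n$ against $u_n - f_k$, where $f_k \in C_c^\infty$ approximates $f$, which gives $\|u(t)-f\|_{L^2(\mathcal K)}^2 \le 2\|f-f_k\|_{L^2}^2 + C(t-\eta)[f_k]_{H^s}^2$ and hence $L^2_{\loc}$-convergence as $t\searrow\eta$. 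Your Galerkin route is the textbook Lions--Magnes scheme and works equally well here; it has the advantage of giving $u \in C([\eta,T];L^2(\R^d))$ and $u(\eta)=f$ directly from the embedding $L^2(I;V)\cap H^1(I;V^*)\hookrightarrow C(\bar I;H)$, whereas the paper's approach is more elementary and avoids setting up the Gelfand triple and the dual-space time derivative.

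For uniqueness the two arguments coincide: the paper also localizes with a cutoff $\zeta\in C_c^\infty(B_{2R})$, tests with $u\zeta^2$, and shows the cross term is bounded by $CR^{-2s}\int_\eta^\tau\int_{\R^d} u^2$, which vanishes as $R\to\infty$ because solutions in the sense of \autoref{def:Cauchy} lie in $L^\infty((\eta,T);L^2(\R^d))$ by definition. Note therefore that your stated concern --- that $w$ might fail to lie in $L^2(\R^d)$ uniformly in time --- is unfounded: you are working with \autoref{def:Cauchy}, not \autoref{Def:global-sol}, and the global $L^2$ membership is part of the hypothesis. The localization is still needed (test functions must have compact support), but the tail control comes from $L^2$, not from the weighted $L^1(\R^d;\mu)$ space.
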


\begin{proof}
Let $B_n$ be the ball of radius $n\in\N$ centered at the origin. Consider the boundary value problem
\begin{equation}\label{Eq:problem-n}
\begin{cases}
        \partial_t u - \mathcal{L}_t u &= 0 ~~ \text{ in } (\eta,T) \times B_n,\\
        u(\eta) &= f\chi_{B_n} ~~ \text{ in } \R^d,\\
        u &= 0 ~~ \text{ in } (\eta, T) \times (\R^d \setminus B_n).
    \end{cases}
\end{equation}
By \cite[Theorem 5.3]{FKV15}, this problem admits a unique solution $u_n$ in the function space $C((\eta,T); L^2(B_n))\cap L^2(\eta,T; H_{0}^s(B_n))$ in the sense that
\begin{align}\label{weak-form-n}
        -\int_\eta^T \int_{\R^d} u_n(t,x) \partial_t \phi(t,x) \d x \d t + \int_\eta^T \cE^{(t)}(u_n(t),\phi(t)) \d t = 0
    \end{align}
   for any $\phi \in H^{1}((\eta,T);L^2(B_n))\cap  L^2(\eta,T;H^s(\R^d))$ with $\supp(\phi) \Subset (\eta,T) \times B_n$.
In addition, $u$ attains its initial datum in the $L^2$ sense, i.e. $u(t) \to f \chi_{B_n}$ in $L^2(\R^d)$. Moreover, 
\begin{align*}
    \sup_{\eta<t<T}\int_{\R^d} |u_n(t)|^2\d x +2\int_\eta^T\mathcal{E}^{(t)}(u_n(t),u_n(t))\d t\le \|f\|^2_{L^2(\R^d)}.
\end{align*}
In particular, this means that the family $\{u_n:\,n\in\N\}$ is uniformly bounded in $L^2((\eta,T);H^s(\R^d))$ and in $L^{\infty}((\eta,T);L^2(\R^d))$. Moreover, by the comparison principle (see \cite[Lemma 5.2]{KaWe23}), $\{u_n:\,n\in\N\}$ forms an increasing sequence. Therefore, we can extract a subsequence $u_{n^{\prime}}$ and identify some $u\in L^2((\eta,T);H^s(\R^d))$, such that $u_{n^{\prime}}\nearrow u$ a.e. in $(\eta,T)\times\R^d$, in $L^2((\eta,T);L^2(\R^d))$, and weakly in $L^2((\eta,T);H^s(\R^d))$. Consequently, we can pass to the limit in \eqref{weak-form-n}. Moreover, it holds $u \in L^{\infty}((\eta,T);L^2(\R^d))$ by the Banach-Saks theorem.

Next, we show that $u$ attains the initial datum $f$ in the $L^2_{\loc}(\R^d)$ sense. To see this, let $f_{k} \in C^{\infty}_c(\R^d)$ be a mollification of   $f \chi_{B_k}$ such that $f_{k} \to f$ in $L^2(\R^d)$, as $k \to \infty$. We let $k \in \N$ be fixed and test the weak formulation of $u_n$ with $u_n - f_{k}$, which is possible if $n \in \N$ is large enough. Then, we obtain for any $t \in (\eta,T)$
\begin{align*}
\int_{\R^d} |u_n(t) - f_{k}|^2 \d x \le \int_{\R^d}  |f - f_{k}|^2 \d x - 2\int_{\eta}^t \cE^{(t)}(u_n(t),u_n(t) - f_k) \d t.
\end{align*}
Here, we also used that $(u_n - f_k) (\partial_t u_n) = \frac{1}{2}  \partial_t (u_n - f_k)^2$ and a proper time-mollification. By the Cauchy-Schwartz inequality,
\begin{align*}
-\cE^{(t)}(u_n(t),u_n(t) - f_k) &= -\cE^{(t)}(u_n(t) - f_k,u_n(t) - f_k)   -\cE^{(t)}(f_k,u_n(t) - f_k) \\
&\le -\frac{1}{2}\cE^{(t)}(u_n(t) - f_k,u_n(t) - f_k)  + 4\cE^{(t)}(f_k,f_k) \\
&\le 4\cE^{(t)}(f_k,f_k),
\end{align*}
and therefore we have upon taking the limit $n \to \infty$ and by the triangle inequality
\begin{align*}
\Vert u(t) - f \Vert_{L^2(\mathcal{K})}^2 \le 2 \Vert f - f_{k} \Vert_{L^2(\R^d)}^2 + C (t - \eta) [f_k]_{H^s(\R^d)}^2.
\end{align*}
Altogether, taking the limit $t \searrow \eta$, we obtain for any $k \in \N$ and any compact set $\mathcal{K} \subset \R^n$ that
\begin{align*}
\lim_{t \searrow \eta} \Vert u(t) - f \Vert_{L^2(\mathcal{K})}^2 \le 2 \Vert f - f_{k} \Vert_{L^2(\R^d)}^2,
\end{align*}
which implies that $u(t) \to f$ in $L^2(\mathcal{K})$ as $t \searrow \eta$ due to the construction of $f_k$.

To prove that solutions to \eqref{eq:inhom-Cauchy} are unique, we let $u$ be an arbitrary solution attaining its initial data $f$ in the $L^2_{\loc}(\R^d)$ sense, and take $u \z^2$ as a testing function, where $\z \in C^{\infty}_c(B_{\frac32R})$ with $0 \le \z \le 1$, $\z \equiv 1$ in $B_R$, and $|\nabla \z| \le 4/ R$. It holds that for any $\tau \in (\eta,T)$,
\begin{align*}
    \int_{B_{2R}}\z^2 u^2(\tau,\cdot) \d x +2\int_\eta^{\tau}\int_{\R^d}\int_{\R^d}(\z(x)u(t,x)-\z(y)u(t,y))^2 K(t;x,y)\d x\d y \d t\\    
    =\int_{B_{2R}}\z^2 f^2 \d x + 2\int_\eta^{\tau} \int_{\R^d}\int_{\R^d}u(t,x)u(t,y)(\z(x)-\z(y))^2 K(t;x,y)\d x\d y \d t.
\end{align*}
Here, we also used that $u(\partial_t u) = \frac{1}{2} \partial_t (u^2)$  modulo a proper time mollification, and integrated by parts. Hence, we deduce
\begin{align*}
    \int_{B_{2R}}\z^2 u^2(\tau,\cdot) \d x &\le \int_{B_{2R}}\z^2 f^2 \d x + 2\int_\eta^{\tau} \int_{\R^d}\int_{\R^d}u(t,x)u(t,y)(\z(x)-\z(y))^2 K(t;x,y)\d x\d y \d t \\
    &\le \int_{B_{2R}}\z^2 f^2 \d x + \frac{C}{R^{2s}} \int_{\eta}^{\tau} \int_{\R^d} u^2 \d x \d t,
\end{align*}
for some $C=C(d,s,\Lm)$. Here, we used the upper bound of $K$ in \eqref{eq:Kcomp}, Young's inequality and the following well-known fact
\begin{align*}
    \int_{\R^d} \frac{(\z(x) - \z(y))^2}{|x-y|^{d+2s}} \d y \le c  \int_{B_{2R}(x)} \frac{R^{-2}}{|x-y|^{d-(2-2s)}} \d y + c \int_{B_{2R}^c(x)} \frac{1}{|x-y|^{d+2s}} \d y \le \frac{C}{R^{2s}}.
\end{align*}
Thus, using that $u \in L^2((\eta,T);L^2(\R^d))$ and taking the limit $R \nearrow \infty$, we deduce that any weak solution to \eqref{eq:inhom-Cauchy} satisfies
\begin{align}
\label{eq:L2-est}
    \sup_{\tau \in (\eta,T)}\int_{\R^d} u^2(\tau,\cdot) \d x \le \int_{\R^d} f^2 \d x.
\end{align}
In particular, if $u_1,u_2 \in L^{\infty}((\eta,T);L^2(\R^d))\cap L^2((\eta, T); H^s(\R^d))$ are two solutions to \eqref{eq:inhom-Cauchy} with initial datum $f \in L^2(\R^d)$, then $u_1-u_2$ is a solution to \eqref{eq:inhom-Cauchy} with zero initial datum. Hence, \eqref{eq:L2-est} implies that $u_1 \equiv u_2$, so solutions to \eqref{eq:inhom-Cauchy} are unique. 
\end{proof}

\begin{remark}\label{Rmk:max-pri}\upshape
    If $f\in L^{2}(\R^d)\cap L^{\infty}(\R^d)$, then the solution $u_n$ to the boundary value problem \eqref{Eq:problem-n} satisfies $\|u_n\|_{L^{\infty}(\R^d)}\le \|f\|_{L^{\infty}(\R^d)}$ due to the maximum principle. Consequently, by construction, we have $\|P_{\eta,t}f\|_{L^{\infty}(\R^d)}\le \|f\|_{L^{\infty}(\R^d)}$.
\end{remark}
\subsection{Existence of fundamental solution}
Next, we establish the existence of the fundamental solution and prove the representation formula in \eqref{eq:representation}.

\begin{lemma}\label{Lm:fund-sol}
    Assume that we are in the setup of \autoref{prop:representation}. Then, there exists a function $(t,x,y) \mapsto p_{\eta,t}(x,y)$ such that 
    \begin{align}
    \label{eq:representation-2}
        P_{\eta,t}f(x) = \int_{\R^d} p_{\eta,t}(x,y) f(y) \d y.
    \end{align}
\end{lemma}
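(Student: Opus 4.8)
The plan is to produce $p_{\eta,t}$ via the Dunford--Pettis representation theorem: it suffices to show that, for each fixed pair $\eta<t$, the solution operator $P_{\eta,t}$ of \autoref{lemma:ex-sg} maps $L^1(\R^d)$ boundedly into $L^\infty(\R^d)$. All the needed structural properties follow from the construction in \autoref{lemma:ex-sg}. First, by the comparison principle (\cite[Lemma~5.2]{KaWe23}) the approximants $u_n$ of \eqref{Eq:problem-n}, and hence $f\mapsto P_{\eta,t}f$, are order preserving; in particular $P_{\eta,t}f\ge0$ for $f\ge0$ and $|P_{\eta,t}f|\le P_{\eta,t}|f|$. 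Second, the Dirichlet solution operators $f\mapsto u_n(t,\cdot)$ are $L^1$-contractions --- a sub-Markovian property, formally because $\int_{B_n}\mathcal{L}_t u_n\,\d x\le0$ for $u_n\ge0$ since mass can only leak out through $\R^d\setminus B_n$ (the $B_n\times B_n$ contribution cancels by antisymmetry, and the remaining term has a sign); passing to the limit $n\to\infty$ along the increasing sequence via monotone convergence gives $\|P_{\eta,t}f\|_{L^1(\R^d)}\le\|f\|_{L^1(\R^d)}$ for $0\le f\in L^1\cap L^2$, and then for all $f\in L^1\cap L^2$ by $|P_{\eta,t}f|\le P_{\eta,t}|f|$.

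Next I would upgrade this to the ultracontractivity estimate
\[
\|P_{\eta,t}f\|_{L^\infty(\R^d)}\le\frac{c\,(t-\eta)}{1\wedge(t-\eta)^{1+\frac{d}{2s}}}\,\|f\|_{L^1(\R^d)},\qquad f\in L^1\cap L^2.
\]
Indeed, for $0\le f\in L^1\cap L^2$, after translating so that $\eta=0$, the function $u:=P_{0,t}f$ is a nonnegative global weak solution in $(0,t]\times\R^d$ in the sense of \autoref{Def:global-sol} (the $L^1(\R^d;\mu)$-integrability in time being clear from $u\in L^\infty(L^2)$). Hence \autoref{Prop:bd:2} applies at every $x_o\in\R^d$; since $\int_{\R^d}u(s,\cdot)\,\d x\le\|f\|_{L^1(\R^d)}$ for every $s$ by the previous paragraph, its right-hand side is at most $c\,(t-\eta)(1\wedge(t-\eta)^{1+d/2s})^{-1}\|f\|_{L^1(\R^d)}$, uniformly in $x_o$; the signed case follows again from $|P_{\eta,t}f|\le P_{\eta,t}|f|$. (An alternative runs through the Nash inequality furnished by \eqref{eq:coercive}, the energy identity $\tfrac{\d}{\d t}\|u\|_{L^2}^2=-2\,\cE^{(t)}(u,u)$, and duality with the dual problem.) Thus $P_{\eta,t}$ extends to a bounded linear map $L^1(\R^d)\to L^\infty(\R^d)$.

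The Dunford--Pettis theorem --- using that $L^\infty(\R^d\times\R^d)$ is the dual of $L^1(\R^d)\otimes_\pi L^1(\R^d)\cong L^1(\R^d\times\R^d)$ --- now supplies $p_{\eta,t}\in L^\infty(\R^d\times\R^d)$ with $P_{\eta,t}f(x)=\int_{\R^d}p_{\eta,t}(x,y)f(y)\,\d y$ for all $f\in L^1(\R^d)$. Positivity of $P_{\eta,t}$ forces $p_{\eta,t}\ge0$ a.e., and testing against $\chi_{B_n}\nearrow1$ together with monotone convergence gives $\int_{\R^d}p_{\eta,t}(x,y)\,\d y\le1$; hence $p_{\eta,t}(x,\cdot)\in L^1(\R^d)\cap L^\infty(\R^d)\subset L^2(\R^d)$ for a.e.\ $x$. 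To reach general $f\in L^2(\R^d)$, put $f_n:=f\chi_{B_n}\in L^1\cap L^2$: by \eqref{eq:L2-est} one has $P_{\eta,t}f_n\to P_{\eta,t}f$ in $L^2(\R^d)$, hence a.e.\ along a subsequence, while $\int_{\R^d}p_{\eta,t}(x,y)f_n(y)\,\d y\to\int_{\R^d}p_{\eta,t}(x,y)f(y)\,\d y$ for a.e.\ $x$ by dominated convergence; this establishes \eqref{eq:representation-2} for every $f\in L^2(\R^d)$.

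The only genuinely substantive step is the ultracontractivity bound: the point is to recognize that \autoref{Prop:bd:2}, once fed with the time-uniform mass bound of the first paragraph, delivers exactly the $L^1\to L^\infty$ smoothing that Dunford--Pettis requires --- and, for the same reason, that one should not expect a bound uniform as $t\searrow\eta$, the blow-up rate $(t-\eta)^{-d/2s}$ being unavoidable. Everything else is soft functional analysis combined with the $L^2$-stability estimate \eqref{eq:L2-est} already at hand.
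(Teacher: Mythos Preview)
Your argument is correct, but the route differs from the paper's. The paper never needs the $L^1$-contraction: it establishes only the weaker $L^2\to L^\infty$ bound
\[
\Vert P_{\eta,t}\chi_A\Vert_{L^\infty(\R^d)}\le c\,\Vert\chi_A\Vert_{L^2(\R^d)},
\]
which follows immediately from the local boundedness estimate (\autoref{Prop:bd}) together with H\"older's inequality and the $L^2$ energy bound \eqref{eq:L2-est} already proved in \autoref{lemma:ex-sg}. This is enough to conclude that the set function $A\mapsto P_{\eta,t}\chi_A(x)$ is a Borel measure absolutely continuous with respect to Lebesgue measure, and Radon--Nikodym then produces the density $p_{\eta,t}(x,\cdot)$; general $f\in L^2(\R^d)$ are reached by monotone approximation with simple functions and splitting $f=f_+-f_-$. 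Your path through $L^1\to L^\infty$ ultracontractivity and Dunford--Pettis costs you the extra sub-Markovian step (which, as you acknowledge, is only sketched formally and would need a cutoff approximation of the constant test function to be made rigorous), but in return it yields the on-diagonal heat kernel bound $\Vert p_{\eta,t}\Vert_{L^\infty(\R^d\times\R^d)}\le c(t-\eta)^{-d/2s}$ for free. The paper's approach is more economical for the bare statement; yours is the more natural semigroup argument and gives strictly more.
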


\begin{proof}
    Note that for any $\eta,t,x$, the map $A \mapsto P_{\eta,t} \chi_{A}(x)$, where $A \subset \R^d$ is a Borel set, is a Borel measure. We claim that for any set $A$ with $\chi_A \in L^1(\R^d)$ it holds
    \begin{align}
    \label{eq:abs-cont}
        \Vert P_{\eta,t} \chi_A \Vert_{L^{\infty}(\R^d)} \le  c \Vert \chi_A \Vert_{L^2(\R^d)}
    \end{align}
    for some $c = c(\eta,t) > 0$. This estimate implies that $A \mapsto P_{\eta,t} \chi_A(x)$ is absolutely continuous with respect to the Lebesgue measure for any $\eta,t,x$. Hence, by the Radon-Nikodym theorem, there exists a density, which we denote by $y \mapsto p_{\eta,t}(x,y)$ such that
    \begin{align*}
        P_{\eta,t} \chi_A(x) = \int_{A} p_{\eta,t}(x,y) \d y = \int_{\R^d} p_{\eta,t}(x,y) \chi_A(y) \d y.
    \end{align*}
    Note that by the parabolic weak maximum principle (see \cite[Lemma 5.2]{KaWe23}), we have $P_{\eta,t}\chi_A(x) \ge 0$ and also $p_{\eta,t}(x,y) \ge 0$.
    Finally, note that any nonnegative function $f \in L^2(\R^d)$ can be approximated by a monotone sequence of simple functions, and hence, we deduce \eqref{eq:representation-2}  for such functions $f$ by monotone convergence and the stability of weak solutions (see again \cite[Lemma 5.3]{KaWe23}). For general $f \in L^2(\R^d)$, we decompose $f = f_+ - f_-$ and apply the previous argument two $f_{\pm}$ separately, hence proving \eqref{eq:representation-2}.

    It remains to prove \eqref{eq:abs-cont}. 
   We apply the local boundedness estimate (see \autoref{Prop:bd}) and H\"older's inequality to deduce that for any $x_o \in \R^d$
   \begin{align*}
       \Vert P_{\eta,t}f \Vert_{L^{\infty}(B_{(t-\eta)^{1/2s}}(x_o))} &\le c \int_{\eta}^t \int_{\R^d} \frac{P_{\eta,\tau} f(x)}{ (1 + |x-x_o|)^{d+2s}} \d x \d \tau\\
       &\le c \sup_{\tau \in (\eta,T)}\Vert P_{\eta,\tau} f \Vert_{L^2(\R^d)},
   \end{align*}
   where $c = c(\eta,t)$. Using \eqref{eq:L2-est} and the arbitrariness of $x_o$, we deduce
   \begin{align*}
       \Vert P_{\eta,t} f \Vert_{L^{\infty}(\R^d)} \le  c \Vert f \Vert_{L^2(\R^d)},
   \end{align*}
   which in particular implies \eqref{eq:abs-cont} and concludes the proof.
\end{proof}

\subsection{Properties of the fundamental solution}
    The purpose of this subsection is to show that the fundamental solution $p_{\eta,t}(x,y)$ identified in \autoref{Lm:fund-sol} satisfies the three properties listed in \eqref{eq:p-properties}.  
    
    The first and second property in \eqref{eq:p-properties}, namely nonnegativity and integrability of $p_{\eta,t}$, follow from the maximum principle (see \cite[Lemma 5.2]{KaWe23}). This is immediately clear for the nonnegativity. To see the integrability, note that according to \autoref{Rmk:max-pri}, we have
\[
\int_{\R^d} \chi_\mathcal{K}(y)p_{\eta,t}(x,y) \d y =P_{\eta, t}\chi_{\mathcal{K}}(x)\le 1
\]
for any compact set $\mathcal{K}$ and any $x\in\R^d$. Then, this implies that 
$$
\sup_{x\in\R^d}\int_{\R^d} p_{\eta,t}(x,y) \d y\le1.
$$
    
    The third property in \eqref{eq:p-properties} (semigroup property) follows from the observation that by the uniqueness of solutions for any $f \in L^2(\R^d)$ and $\eta < \tau < t$
    \begin{align*}
       P_{\eta,t}f = P_{\tau,t} (P_{\eta,\tau} f),
    \end{align*}
    which implies using \eqref{eq:representation} and Fubini's theorem that
    \begin{align*}
        \int_{\R^d} p_{\eta,t}(x,y) f(y) \d y &= \int_{\R^d} p_{\tau,t}(x,y) P_{\eta,\tau} f(y) \d y \\
        & = \int_{\R^d} \left( \int_{\R^d} p_{\tau,t}(x,y) p_{\eta,\tau}(y,z) f(z) \d z \right) \d y \\
        &= \int_{\R^d} \left( \int_{\R^d} p_{\tau,t}(x,z) p_{\eta,\tau}(z,y) \d z \right) f(y) \d y.
    \end{align*}
    Hence, we deduce the second property by a density argument.

\subsection{Symmetry of the fundamental solution}
The goal of this subsection is to prove \eqref{eq:dual-relation}. The proof uses the same idea as the proof of \autoref{lemma:weighted-L1-time-insensitive}, but this time, instead of the pointwise upper bound \eqref{eq:Psi-upper}, we can use that $P_{\eta,t} f \in L^{\infty}((\eta,T);L^2(\R^d))$.

Denote by $\hat{P}_{T,t}f$ the solution to the dual Cauchy problem, i.e. $\partial_t u + \mathcal{L}_t u = 0$ with $u(T) = f$. Its existence and uniqueness is ensured by \autoref{lemma:ex-sg}. Moreover, the existence of this dual problem's fundamental solution $\hat{p}_{T,t}(x,y)$ is ensured by \autoref{Lm:fund-sol}.  Let us consider nonnegative $f, \, g \in L^2(\R^d)$. Let $\xi_R \in C_c^{1}(B_{2R})$ be such that $0 \le \xi_R \le 1$ and $\xi_R \equiv 1$ in $B_R$, and $|\nabla \xi_R| \le 4 /R$. Moreover, we fix $t_o\in(\eta,T)$ and consider arbitrary $\tau_o,\,\tau_1$ satisfying $t_o \le \tau_o < \tau_1 < T$. Introduce a sequence of   functions $\zeta_k\in C_c^{1}(0,T)$, $k\in\N$ satisfying
\begin{equation*}
\left\{
    \begin{array}{cc}
        \zeta_k \to \chi_{[\tau_o, \tau_1]},  \quad
         \partial_t \zeta_k \to \delta_{\tau_o} - \delta_{\tau_1}\quad \text{as}\>\> k\to\infty , \\ [5pt]
         \displaystyle\supp(\zeta_k) \subset \Big[\frac{t_o+\tau_o}{2}, \frac{T +\tau_1}{2}\Big].
    \end{array}\right.
\end{equation*}
Proceeding as in the proof of \autoref{lemma:weighted-L1-time-insensitive} and replacing the roles of $(u,\Psi)$ by $( \hat P_{T,\tau} g, P_{\eta,\tau} f)$, we  obtain 
\begin{align}\label{Eq:P-P-hat}
    \bigg| \int_{\eta}^{T} \partial_t \zeta_k(t) \int_{\R^d} \xi_R(x) P_{\eta,t} f (x) \hat{P}_{T,t}g(x) \d x \d t \bigg| \le c\, \textbf{I}_R + c\, \textbf{II}_R,
\end{align}
where
\begin{align*}
    \textbf{I}_R &:= \int_{t_o}^{T} \iint_{(B_R^c \times B_R^c)^c} P_{\eta,t} f (x) \big| \hat{P}_{T,t}g(x) - \hat{P}_{T,t}g(y) \big| (1 \wedge R^{-1}|x-y|) K(t;x,y) \d y \d x \d t, \\
    \textbf{II}_R &:= \int_{t_o}^{T} \iint_{(B_R^c \times B_R^c)^c} \hat{P}_{T,t} g (x) \big| P_{\eta,t}f(x) - P_{\eta,t}f(y) \big| (1 \wedge R^{-1}|x-y|) K(t;x,y) \d y \d x \d t.
\end{align*}
We claim that $\textbf{I}_R , \, \textbf{II}_R \to 0$ as $R \to \infty$. This follows in a similar way as shown in \autoref{lemma:I1} and  \autoref{lemma:I2}, and relies on the fact that $P_{\eta,t} f ,\, \hat{P}_{T,t} g \in L^{\infty}((\eta,T);L^2(\R^d))$. Once this claim is established, we let $k\to\infty$ and then $R\to\infty$ in \eqref{Eq:P-P-hat} and obtain that
\begin{align*}
    t\mapsto\int_{\R^d} P_{\eta,t} f (x) \hat{P}_{T,t}g(x) \d x
\end{align*}
is a constant function on $(\eta, T)$. Choose in the above identity $f = \chi_A$ and $g = \chi_B$ for some compact sets $A,\, B \subset \R^d$. Recall from \autoref{lemma:ex-sg} that $P_{\eta,t} f\to f$ as $t\searrow\eta$ and $\hat{P}_{T,t} g \to g$ as $t\nearrow T$ in the sense of $L^2_{\loc}(\R^d)$. Thus, taking limit as $t\nearrow T$ and as $t\searrow\eta$ in the above constant function, we find that
\begin{align*}
    \int_{\R^d} P_{\eta,T} f(x) g(x) \d x = \int_{\R^d} \hat{P}_{T,\eta} g(x) f(x) \d x.
\end{align*}
Meanwhile, evoking the representation formulas for $P_{\eta,T} f$ and $\hat{P}_{T,t} g$ from \autoref{Lm:fund-sol}, we deduce
\begin{align*}
    \int_{A} \int_B p_{\eta,T}(x,y) \d y \d x = \int_{A} \int_B \hat{p}_{T,\eta}(y,x) \d y \d x,
\end{align*}
which implies $p_{\eta,T}(x,y) = \hat{p}_{T,\eta}(y,x)$. This proves \eqref{eq:dual-relation}, as desired

Hence, it remains to prove that $\textbf{I}_R , \, \textbf{II}_R \to 0$ as $R \to \infty$. We only show  $\textbf{I}_R \to 0$ as $R \to \infty$, as the argument for $\textbf{II}_R$ is analogous. In order for that, we reproduce some arguments from \autoref{lemma:I2}. In fact, we split $\textbf{I}_{R/2}$ into five parts:
\begin{align*}
    \textbf{I}_1 &= \int_{\eta}^T\int_{B_{R}}P_{\eta,t} f (x)\int_{B_{R}}\big| \hat{P}_{T,t}g(x) - \hat{P}_{T,t}g(y) \big| (1 \wedge R^{-1}|x-y|) K(t;x,y) \d y \d x \d t,\\
    \textbf{I}_2 &= \int_{\eta}^T\int_{B_{R/2}} P_{\eta, t}f(x)\hat{P}_{T,t} g(x)\int_{B_R^c}  \frac{1 \wedge R^{-1}|x-y|}{|x-y|^{d+2s}}  \d y \d x \d t,\\
    \textbf{I}_3 &= \int_{\eta}^T\int_{B_{R/2}} P_{\eta, t}f(x) \int_{B_R^c}  \hat{P}_{T,t} g(y)\frac{1 \wedge R^{-1}|x-y|}{|x-y|^{d+2s}}  \d y \d x \d t,\\
    \textbf{I}_4 &= \int_{\eta}^T\int_{B_R^c} P_{\eta, t}f(x)\hat{P}_{T,t} g(x)\int_{B_{R/2}}  \frac{1 \wedge R^{-1}|x-y|}{|x-y|^{d+2s}}  \d y \d x \d t,\\
    \textbf{I}_5 &= \int_{\eta}^T\int_{B^c_{R}} P_{\eta, t}g(x) \int_{B_{R/2}}  \hat{P}_{T,t} f(y)\frac{1 \wedge R^{-1}|x-y|}{|x-y|^{d+2s}}  \d y \d x \d t.
\end{align*}
Repeating what has been done in \autoref{lemma:I2} and applying H\"older's inequality, we obtain
\begin{align*}
    \textbf{I}_2 + \textbf{I}_4 &\le \frac{c}{R^{2s}}\Vert P_{\eta,t} f \hat{P}_{T,t} g \Vert_{L^1((\eta,T) \times \R^d)} \le \frac{c}{R^{2s}}\Vert P_{\eta,t} f  \Vert_{L^2((\eta,T) \times \R^d)} \Vert \hat{P}_{T,t} g \Vert_{L^2((\eta,T) \times \R^d)},
\end{align*}
for some $c=c(d,s)$.
Likewise,
\begin{align*}
    \textbf{I}_3 + \textbf{I}_5 &\le c \Vert P_{\eta,t} f\Vert_{L^{2}((\eta,T) ; L^1(B_R))} \bigg( \int_{\eta}^T \bigg(\int_{B_R^c} \frac{\hat{P}_{T,t} g(x)}{|x|^{d+2s}} \d x \bigg)^2 \d t \bigg)^{1/2} \\
    &\quad + c \Vert \hat{P}_{T,t} g \Vert_{L^{2}((\eta,T) ; L^1(B_R))}  \bigg( \int_{\eta}^T \bigg(\int_{B_R^c} \frac{P_{\eta,t} f(x)}{|x|^{d+2s}} \d x \bigg)^2 \d t \bigg)^{1/2} \\
    &\le \frac{c}{R^{2s}} \Vert P_{\eta,t} f\Vert_{L^{2}((\eta,T) \times \R^d)} \Vert \hat{P}_{T,t} g \Vert_{L^{2}((\eta,T)  \times \R^d)}
\end{align*}
for some $c=c(d,s)$.

It remains to treat $\textbf{I}_1$. To this end, let us recall the set $\mathcal{S}_R$ satisfying \eqref{Eq:cover-balls} and use a covering argument to obtain
\begin{align*}
    \textbf{I}_1 &\le\sum_{x_o,\, y_o\in\mathcal{S}_R } \int_{t_o}^{T}   \int_{B_{1}(x_o)} P_{\eta,t} f (x)  \int_{B_{1}(y_o)} \big| \hat{P}_{T,t}g(x) - \hat{P}_{T,t}g(y) \big| \\
    &\qquad\qquad\qquad\qquad\qquad\qquad\qquad\qquad\qquad\cdot (1 \wedge R^{-1}|x-y|) K(t;x,y) \d y \d x \d t.
\end{align*}
According to the distance between $x_o$ and $y_o$, we need to consider \textbf{two cases}.
In the \textbf{first case}, i.e. when $|x_o - y_o| < 4$, we use \autoref{Lm:Lq-Lsig} and \autoref{lemma:energy-decay} with $q \in (1,1+\frac{2s}{d})$ and $\sigma = 1$, to obtain that
\begin{align*}
     \frac1R \int_{t_o}^{T}   \int_{B_{5}(x_o)} \int_{B_{5}(y_o)} &\big| \hat{P}_{T,t}g(x) - \hat{P}_{T,t}g(y) \big|  K(t;x,y) |x-y| \d y \d x \d t \\
     &\le \frac{c}{R} \int_{\eta}^T \int_{B_{10}(x_o)} \hat{P}_{T,t}g(x) \d x \d t,
\end{align*}
for some $c=c(d,s,\lm,\Lm, \eta, t_o)$. Moreover, applying H\"older's inequality, the elementary inequality 
$$
\sum_{i=1}^{n}a_i^{\frac12}\le n^{\frac12}\Big(\sum_{i=1}^{n}a_i\Big)^{\frac12}
$$

and then \eqref{Eq:cover-balls}, we have
\begin{align*}
\sum_{x_o\in\mathcal{S}_R} \Vert \hat{P}_{T,t}g \Vert_{L^1((\eta,T) \times B_{10}(x_o))} &\le (T-\eta)^{\frac12}|B_{10}|^{\frac12}\sum_{x_o\in\mathcal{S}_R}  \bigg[ \int_{\eta}^T \int_{B_{10}(x_o)} |\hat{P}_{T,t}g(x)|^2 \d x \d t\bigg]^{\frac12}\\
&\le c\, \#(\mathcal{S}_R)^{\frac12}\bigg[ \sum_{x_o\in\mathcal{S}_R} \int_{\eta}^T \int_{B_{10}(x_o)} |\hat{P}_{T,t}g(x)|^2 \d x \d t\bigg]^{\frac12}\\
&\le c\, R^{\frac{d}{2}} \Vert \hat{P}_{T,t}g \Vert_{L^2((\eta,T) \times \R^d)}
\end{align*}
for some $c=c(d,T)$.
In addition, by the local boundedness estimate of \autoref{Prop:bd} in $B_{2R}$ and H\"older's inequality
\begin{align*}
    \sup_{(t_o,T) \times B_1(x_o)} P_{\eta,t} f &\le \sup_{(t_o,T) \times B_{2R}} P_{\eta,t} f\\ 
    &\le 
    \frac{c}{R^{d}} \Vert P_{\eta,t} f \Vert_{L^1((\eta,T) \times B_{4R})} + c R^{2s} \int_{\eta}^T \int_{B_{4R}^c}  \frac{P_{\eta,t} f(x)}{ |x|^{d+2s}} \d x \d t \\
    &\le  \frac{c}{R^{\frac{d}{2}}} \Vert P_{\eta,t} f \Vert_{L^2((\eta,T) \times \R^d)}
\end{align*}
for some $c=c(d,s,\lambda,\Lambda)$.
Consequently, combining the previous estimates,  we obtain
\begin{align*}
&\sum_{\substack{x_o,\, y_o\in\mathcal{S}_R\\ |x_o-y_o|<4}} \int_{t_o}^{T}\int_{B_{1}(x_o)} P_{\eta,t} f (x) \int_{B_{1}(y_o)}\big| \hat{P}_{T,t}g(x) - \hat{P}_{T,t}g(y) \big| (1 \wedge R^{-1}|x-y|) K(t;x,y) \d y \d x \d t \\ 
&\quad\le \frac{c}{R^{1+\frac{d}{2}}} \Vert P_{\eta,t} f \Vert_{L^{2}(\eta,T) \times \R^d)}  \sum_{x_o\in\mathcal{S}_R} \Vert \hat{P}_{T,t}g \Vert_{L^1((\eta,T) \times B_{10}(x_o))}\\ 
&\quad \le \frac{c}{R} \Vert P_{\eta,t} f \Vert_{L^2(\eta,T) \times \R^d)} \Vert \hat{P}_{T,t}g \Vert_{L^2((\eta,T) \times \R^d)}
\end{align*}
for some $c=c(d,s,\lambda,\Lambda,\eta,t_o)$.
In the \textbf{second case}, i.e. when $|x_o - y_o| \ge 4$, by a similar argument as in \eqref{J1-J2}, we obtain
\begin{align*}
     &\int_{t_o}^{T}\int_{B_{1}(x_o)} P_{\eta,t} f (x) \int_{B_{1}(y_o)}\big| \hat{P}_{T,t}g(x) - \hat{P}_{T,t}g(y) \big| (1 \wedge R^{-1}|x-y|) K(t;x,y) \d y \d x \d t \\
     &\le c\frac{1 \wedge R^{-1} |x_o-y_o|}{|x_o - y_o|^{d+2s}}   \int_{\eta}^T \int_{B_1(x_o)}  P_{\eta,t} f (x)  \hat{P}_{T,t}g(x) \d x \d t \\
     &\quad + c\frac{1 \wedge R^{-1} |x_o-y_o|}{|x_o - y_o|^{d+2s}}  \int_{\eta}^T \bigg( \int_{B_1(x_o)}  P_{\eta,t} f (x) \d x \bigg) \bigg( \int_{B_1(y_o)}  \hat{P}_{T,t}g(y) \d y \bigg)  \d t \\
     &=: c\, J_1(x_o,y_o) +c\, J_2(x_o,y_o).
\end{align*}
For $J_1$, we can proceed exactly as for $J_1$ in the proof of \autoref{lemma:I1}. Namely, we use \eqref{Eq:aug-est-1}  and H\"older's inequality to estimate
\begin{align*}
    \sum_{\substack{x_o,\, y_o\in\mathcal{S}_R\\ |x_o-y_o|\ge4}}  J_1(x_o,y_o) &= \sum_{x_o\in\mathcal{S}_R} \int_{\eta}^T \int_{B_1(x_o)}  P_{\eta,t} f (x)  \hat{P}_{T,t}g(x) \d x \d t \bigg(\sum_{\substack{y_o\in\mathcal{S}_R\\ |x_o-y_o|\ge4}}\frac{1 \wedge R^{-1} |x_o-y_o|}{|x_o - y_o|^{d+2s}} \bigg)\\
    &\le \Big(c\frac{\mathfrak{C}(R,s)}R + \frac{c}{R^{2s}}\Big) \|P_{\eta,t} f \cdot \hat{P}_{T,t}g\|_{L^1((\eta,T)\times\R^d)}\\
    &\le c\Big(\frac{\mathfrak{C}(R,s)}R + \frac{1}{R^{2s}}\Big) \|P_{\eta,t} f \|_{L^2((\eta,T)\times\R^d)}\|\hat P_{\eta,t} g \|_{L^2((\eta,T)\times\R^d)}.
\end{align*}
For $J_2$, we first apply Cauchy-Schwartz's and Young's inequality, and then \eqref{Eq:aug-est-1} and \eqref{Eq:cover-balls} to get
\begin{align*}
    \sum_{\substack{x_o,\, y_o\in\mathcal{S}_R\\ |x_o-y_o|\ge4}}  J_2(x_o,y_o) &\le c \sum_{\substack{x_o,\, y_o\in\mathcal{S}_R\\ |x_o-y_o|\ge4}} \frac{1 \wedge R^{-1} |x_o-y_o|}{|x_o - y_o|^{d+2s}}  \Vert P_{\eta,t} f \Vert_{L^2((\eta,T) \times B_1(x_o))}^2 \\
    &\quad + c \sum_{\substack{x_o,\, y_o\in\mathcal{S}_R\\ |x_o-y_o|\ge4}}  \frac{1 \wedge R^{-1} |x_o-y_o|}{|x_o - y_o|^{d+2s}}  \Vert \hat{P}_{T,t} g \Vert_{L^2((\eta,T) \times B_1(y_o))}^2 \\
    &\le c \Big(\frac{\mathfrak{C}(R,s)}R + \frac{1}{R^{2s}} \Big)\\
    &\quad\cdot\bigg( \sum_{x_o\in\mathcal{S}_R}  \Vert P_{\eta,t} f \Vert_{L^2((\eta,T) \times B_1(x_o))}^2 + \sum_{y_o\in\mathcal{S}_R}  \Vert \hat{P}_{T,t} g \Vert_{L^2((\eta,T) \times B_1(y_o))}^2  \bigg) \\
    &\le c \Big(\frac{\mathfrak{C}(R,s)}R + \frac{1}{R^{2s}} \Big) \bigg(  \Vert P_{\eta,t} f \Vert_{L^2((\eta,T) \times \R^d)}^2 + \Vert \hat{P}_{T,t} g \Vert_{L^2((\eta,T) \times \R^d)}^2 \bigg).
\end{align*}
The proof is complete after combining these two cases and letting $R\to\infty$.


\begin{thebibliography}{DCKP16}

\bibitem[APT22]{APT22}
K.~Adimurthi, H.~Prasad, and V.~Tewary.
\newblock Local {H}{\"o}lder regularity for nonlocal parabolic $p$-{L}aplace
  equations.
\newblock {\em arXiv:2205.09695}, 2022.

\bibitem[BK24]{ByKi24}
S.-S. Byun and K.~Kim.
\newblock A {H}\"older estimate with an optimal tail for nonlocal parabolic
  p-{L}aplace equations.
\newblock {\em Ann. Mat. Pura Appl. (4)}, 203(1):109--147, 2024.

\bibitem[BL02]{BaLe02}
R.~Bass and D.~Levin.
\newblock Transition probabilities for symmetric jump processes.
\newblock {\em Trans. Amer. Math. Soc.}, 354(7):2933--2953, 2002.

\bibitem[BS05]{BoSz05}
K.~Bogdan and P.~Sztonyk.
\newblock Harnack's inequality for stable {L}\'{e}vy processes.
\newblock {\em Potential Anal.}, 22(2):133--150, 2005.

\bibitem[BSV17]{BSV17}
M~Bonforte, Y.~Sire, and J.~V\'{a}zquez.
\newblock Optimal existence and uniqueness theory for the fractional heat
  equation.
\newblock {\em Nonlinear Anal.}, 153:142--168, 2017.

\bibitem[BV10]{BoVa10}
M.~Bonforte and J.~L. V\'azquez.
\newblock Positivity, local smoothing, and {H}arnack inequalities for very fast
  diffusion equations.
\newblock {\em Adv. Math.}, 223(2):529--578, 2010.

\bibitem[CCV11]{CCV11}
L.~Caffarelli, C.~H. Chan, and A.~Vasseur.
\newblock Regularity theory for parabolic nonlinear integral operators.
\newblock {\em J. Amer. Math. Soc.}, 24(3):849--869, 2011.

\bibitem[CK03]{ChKu03}
Z.-Q. Chen and T.~Kumagai.
\newblock Heat kernel estimates for stable-like processes on {$d$}-sets.
\newblock {\em Stochastic Process. Appl.}, 108(1):27--62, 2003.

\bibitem[CKW20]{CKW20}
Z.-Q. Chen, T.~Kumagai, and J.~Wang.
\newblock Stability of parabolic {H}arnack inequalities for symmetric non-local
  {D}irichlet forms.
\newblock {\em J. Eur. Math. Soc. (JEMS)}, 22(11):3747--3803, 2020.

\bibitem[CKW21]{CKW21}
Z.-Q. Chen, T.~Kumagai, and J.~Wang.
\newblock Stability of heat kernel estimates for symmetric non-local
  {D}irichlet forms.
\newblock {\em Mem. Amer. Math. Soc.}, 271(1330):v+89, 2021.

\bibitem[CKW22]{CKW22}
Z.-Q. Chen, T.~Kumagai, and J.~Wang.
\newblock Heat kernel estimates for general symmetric pure jump {D}irichlet
  forms.
\newblock {\em Ann. Sc. Norm. Super. Pisa Cl. Sci. (5)}, 23(3):1091--1140,
  2022.

\bibitem[CKW23]{CKW23}
J.~Chaker, M.~Kim, and M.~Weidner.
\newblock Harnack inequality for nonlocal problems with non-standard growth.
\newblock {\em Math. Ann.}, 386(1-2):533--550, 2023.

\bibitem[CLD16]{ChDa16}
H.~Chang-Lara and G.~D\'{a}vila.
\newblock H\"{o}lder estimates for non-local parabolic equations with critical
  drift.
\newblock {\em J. Differential Equations}, 260(5):4237--4284, 2016.

\bibitem[Coz17]{Coz17}
M.~Cozzi.
\newblock Regularity results and {H}arnack inequalities for minimizers and
  solutions of nonlocal problems: a unified approach via fractional {D}e
  {G}iorgi classes.
\newblock {\em J. Funct. Anal.}, 272(11):4762--4837, 2017.

\bibitem[CS09]{CaSi09}
L.~Caffarelli and L.~Silvestre.
\newblock Regularity theory for fully nonlinear integro-differential equations.
\newblock {\em Comm. Pure Appl. Math.}, 62(5):597--638, 2009.

\bibitem[DCKP14]{DKP14}
A.~Di~Castro, T.~Kuusi, and G.~Palatucci.
\newblock Nonlocal {H}arnack inequalities.
\newblock {\em J. Funct. Anal.}, 267(6):1807--1836, 2014.

\bibitem[DCKP16]{DKP16}
A.~Di~Castro, T.~Kuusi, and G.~Palatucci.
\newblock Local behavior of fractional {$p$}-minimizers.
\newblock {\em Ann. Inst. H. Poincar\'{e} C Anal. Non Lin\'{e}aire},
  33(5):1279--1299, 2016.

\bibitem[DGV12]{DGV12}
E.~DiBenedetto, U.~Gianazza, and V.~Vespri.
\newblock {\em Harnack's inequality for degenerate and singular parabolic
  equations}.
\newblock Springer Monographs in Mathematics. Springer, New York, 2012.

\bibitem[DiB93]{DiB93}
E.~DiBenedetto.
\newblock {\em Degenerate parabolic equations}.
\newblock Universitext. Springer-Verlag, New York, 1993.

\bibitem[DKSZ20]{DKSZ20}
D.~Dier, J.~Kemppainen, J.~Siljander, and R.~Zacher.
\newblock On the parabolic {H}arnack inequality for non-local diffusion
  equations.
\newblock {\em Math. Z.}, 295(3-4):1751--1769, 2020.

\bibitem[DS23]{DeSi23}
M.~Dembny and M.~Sierzkega.
\newblock A sharp {H}arnack bound for a nonlocal heat equation.
\newblock {\em arXiv:2303.08186}, 2023.

\bibitem[FK13]{FeKa13}
M.~Felsinger and M.~Kassmann.
\newblock Local regularity for parabolic nonlocal operators.
\newblock {\em Comm. Partial Differential Equations}, 38(9):1539--1573, 2013.

\bibitem[FKV15]{FKV15}
M.~Felsinger, M.~Kassmann, and P.~Voigt.
\newblock The {D}irichlet problem for nonlocal operators.
\newblock {\em Math. Z.}, 279(3-4):779--809, 2015.

\bibitem[Had54]{Had54}
J.~Hadamard.
\newblock Extension \`a{} l'\'equation de la chaleur d'un th\'eor\`eme de {A}.
  {H}arnack.
\newblock {\em Rend. Circ. Mat. Palermo (2)}, 3:337--346, 1954.

\bibitem[Kas09]{Kas09}
M.~Kassmann.
\newblock A priori estimates for integro-differential operators with measurable
  kernels.
\newblock {\em Calc. Var. Partial Differential Equations}, 34(1):1--21, 2009.

\bibitem[KW22]{KaWe22}
M.~Kassmann and M.~Weidner.
\newblock Nonlocal operators related to nonsymmetric forms {I}: {H}\"older
  estimates.
\newblock {\em arXiv:2203.07418}, 2022.

\bibitem[KW24a]{KaWe24a}
M.~Kassmann and M.~Weidner.
\newblock Nonlocal operators related to nonsymmetric forms {II}: {H}arnack
  inequalities.
\newblock {\em Anal. PDE}, 17(9), 3189-3249, 2024.

\bibitem[KW24c]{KaWe24c}
M.~Kassmann and M.~Weidner.
\newblock The parabolic {H}arnack inequality for nonlocal equations.
\newblock {\em Duke Math J.}, 173(17), 3413-3451, 2024.

\bibitem[KW23]{KaWe23}
M.~Kassmann and M.~Weidner.
\newblock Upper heat kernel estimates for nonlocal operators via {A}ronson's
  method.
\newblock {\em Calc. Var. Partial Differential Equations}, 62(2):Paper No. 68,
  27, 2023.

\bibitem[KW24b]{KaWe24b}
M.~Kassmann and M.~Weidner.
\newblock The {H}arnack inequality fails for nonlocal kinetic equations.
\newblock {\em Adv. Math.}, 459, 110030, 2024.

\bibitem[Lia24a]{Lia24b}
N.~Liao.
\newblock H\"{o}lder regularity for parabolic fractional {$p$}-{L}aplacian.
\newblock {\em Calc. Var. Partial Differential Equations}, 63(1):Paper No. 22,
  34 pp., 2024.

\bibitem[Lia24b]{Lia24}
N.~Liao.
\newblock Harnack estimates for nonlocal drift-diffusion equations.
\newblock {\em arXiv:2402.11986v2}, 2024.

\bibitem[Lia24c]{Lia24c}
N.~Liao.
\newblock On the modulus of continuity of solutions to nonlocal parabolic
  equations.
\newblock {\em J. Lond. Math. Soc.}, 110(3):Paper No. e12985, 30 pp., 2024.

\bibitem[Mos64]{Mos64}
J.~Moser.
\newblock A {H}arnack inequality for parabolic differential equations.
\newblock {\em Comm. Pure Appl. Math.}, 17:101--134, 1964.

\bibitem[Mos71]{Mos71}
J.~Moser.
\newblock On a pointwise estimate for parabolic differential equations.
\newblock {\em Comm. Pure Appl. Math.}, 24:727--740, 1971.

\bibitem[Pin54]{Pin54}
B.~Pini.
\newblock Sulla soluzione generalizzata di {W}iener per il primo problema di
  valori al contorno nel caso parabolico.
\newblock {\em Rend. Sem. Mat. Univ. Padova}, 23:422--434, 1954.

\bibitem[Str19]{Str19}
M.~Str\"{o}mqvist.
\newblock Harnack's inequality for parabolic nonlocal equations.
\newblock {\em Ann. Inst. H. Poincar\'{e} C Anal. Non Lin\'{e}aire},
  36(6):1709--1745, 2019.

\bibitem[WZ23]{WeZa23}
F.~Weber and R.~Zacher.
\newblock Li-{Y}au inequalities for general non-local diffusion equations via
  reduction to the heat kernel.
\newblock {\em Math. Ann.}, 385(1-2):393--419, 2023.

\end{thebibliography}

\end{document}